\DeclareSymbolFont{cyrletters}{OT2}{wncyr}{m}{n}
\DeclareMathSymbol{\Sha}{\mathalpha}{cyrletters}{"58}
\DeclareMathSymbol{\Be}{\mathalpha}{cyrletters}{"42}
\newtheorem{thm}{Theorem}[section]
\newtheorem{lemma}[thm]{Lemma}
\newtheorem{theorem}[thm]{Theorem}
\newtheorem{proposition}[thm]{Proposition}
\theoremstyle{definition}
\newtheorem{question}[thm]{Question}
\newtheorem{example}[thm]{Example}
\newtheorem{definition}[thm]{Definition}
\newtheorem{remark}[thm]{Remark}
\numberwithin{equation}{section}
\def\Z{\ifmmode{{\mathbb Z}}\else{${\mathbb Z}$}\fi}
\def\Q{\ifmmode{{\mathbb Q}}\else{${\mathbb Q}$}\fi}
\def\C{\ifmmode{{\mathbb C}}\else{${\mathbb C}$}\fi}
\def\P{\ifmmode{{\mathbb P}}\else{${\mathbb P}$}\fi}
\def\rmH{\ifmmode{{\mathrm H}}\else{${\mathrm H}$}\fi}
\def\G{\ifmmode{{\mathbb G}}\else{${\mathbb G}$}\fi}
\def\R{\ifmmode{{\mathbb R}}\else{${\mathbb R}$}\fi}
\def\F{\ifmmode{{\mathbb F}}\else{${\mathbb F}$}\fi}
\def\N{\ifmmode{{\mathbb N}}\else{${\mathbb N}$}\fi}
\def\O{\ifmmode{{\calO}}\else{${\calO}$}\fi}
\def\D{\ifmmode{{\cal{D}}^b}\else{${{\cal{D}}^b}$}\fi}
\renewcommand\O{\mathcal{O}}
\newcommand\GG{\mathbb{G}}
\newcommand\Gm{\GG_\mathrm{m}}
\DeclareMathOperator{\HH}{H}
\DeclareMathOperator{\lcm}{lcm}
\DeclareMathOperator{\Frob}{Frob}
\DeclareMathOperator{\inv}{inv}
\DeclareMathOperator{\Hom}{Hom}
\DeclareMathOperator{\Gal}{Gal}
\DeclareMathOperator{\ord}{ord}
\DeclareMathOperator{\Spec}{Spec}
\DeclareMathOperator{\tors}{tors}
\DeclareMathOperator{\res}{\partial}
\DeclareMathOperator{\rank}{rank}
\DeclareMathOperator{\GL}{GL}
\DeclareMathOperator{\sign}{sign}
\newcommand{\A}{\mathbb{A}}
\DeclareMathOperator{\Br}{Br}
\title{The elliptic sieve and Brauer groups}
\author[S. Bhakta]{Subham Bhakta}
\address{Subham Bhakta\\
	Mathematisches Institut \\
	Georg-August-Universität Göttingen\\  
	Bunsenstraße 3-5 \\
	D-37073 Göttingen \\
	Germany}
\email{subham.bhakta@mathematik.uni-goettingen.de}
\author[D.Loughran]{Daniel Loughran}
\address{Daniel Loughran\\
	Department of Mathematical Sciences \\
	University of Bath \\
	Claverton Down \\
	Bath \\
	BA2 7AY \\
	UK.}
\urladdr{https://sites.google.com/site/danielloughran/}
\author[S. L. Rydin Myerson]{Simon L. Rydin Myerson}
\address{Simon L. Rydin Myerson\\
	Mathematics Institute \\
	Zeeman Building \\
	University of Warwick\\
	Coventry \\ CV4 7AL}
\email{simon.myerson@gmail.com}
\author[M. Nakahara]{Masahiro Nakahara}
\address{Masahiro Nakahara \\
	Department of Mathematics\\
	University of Washington\\
	Seattle\\
	WA 98195\\
	USA.}
\email{mn75@uw.edu}
\urladdr{https://sites.math.washington.edu/~mn75/index.html}
\begin{document}

	\subjclass[2010]
	{14G05; 
		11N36, 
		14F22, 
		11G05 
		(secondary).}

	\begin{abstract}
		A theorem of Serre states that almost all plane conics over $\Q$
		have no rational point. We prove an analogue of this  for families
		of conics parametrised by elliptic curves using elliptic divisibility sequences
		and a version of the Selberg sieve for elliptic curves. We also give more general results
		for specialisations of Brauer groups, which yields applications to norm form equations.
	\end{abstract}
	
	\maketitle
	\setcounter{page}{1}
	\tableofcontents
	
	\section{Introduction}
	
	
	\subsection{Sums of two squares}
	A famous theorem of Landau and Ramanujan states that almost all integers are not sums of two squares, when ordered by absolute value. 
	In this paper we prove a version of this result for elliptic curves.
	
	\begin{theorem} \label{thm:y}
		Let $E$ be an elliptic curve over $\Q$ given by an integral Weierstrass equation.
		Let $P \in E(\Q)$ have infinite order with $P \in E(\R)^0$.
		Then there exists $\omega=\omega(E,P) > 0$ such that 
		\begin{equation}\label{eqn:thm-y}
			\#\{n \in \Z  \colon |n| \leq B, y(nP) \text{ is a sum of two squares} \} \ll_{E,P} B/(\log B)^{\omega}.
		\end{equation}
	\end{theorem}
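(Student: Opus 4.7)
The plan is to translate the sum-of-two-squares condition into a Brauer class specialisation and then run an upper bound sieve on the elliptic curve. Recall that a positive rational $y$ is a sum of two rational squares precisely when the quaternion class $(-1,y) \in \Br(\Q)$ is trivial, equivalently when $v_p(y)$ is even for every prime $p \equiv 3 \pmod 4$. Thus one must upper bound the number of $n$ with $|n| \leq B$, $y(nP) \geq 0$, and $v_p(y(nP))$ even for every $p \equiv 3 \pmod 4$. The hypothesis $P \in E(\R)^0$ keeps all multiples $nP$ on the identity component of $E(\R)$; in any event, values of $n$ with $y(nP) < 0$ only improve the bound, so it suffices to attack the parity condition.

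Write $y(nP) = B_n / \psi_n^3$ in lowest terms, where $(\psi_n)$ is the elliptic divisibility sequence attached to $P$. For a prime $p$ of good reduction dividing $\psi_n$, one has $v_p(y(nP)) = -3 v_p(\psi_n)$, which has the same parity as $v_p(\psi_n)$. Standard EDS analysis shows that $p \mid \psi_n$ if and only if $r(p) \mid n$, where $r(p)$ is the order of $\tilde P$ in $\tilde E(\F_p)$, and that the set of $n$ for which $v_p(\psi_n)$ is odd is determined by congruence conditions on $n$ modulo a small multiple of $r(p)$. For every such $n$ the class $(-1, y(nP))$ is non-trivial at $p$, so $y(nP)$ fails to be a sum of two squares. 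The theorem is thereby reduced to a sieving problem: upper bound the number of $n \in [-B,B]$ that avoid all such residue classes as $p$ ranges over good-reduction primes $p \equiv 3 \pmod 4$ with $p \leq B^\theta$ for some small $\theta > 0$.

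This is precisely the situation handled by the elliptic Selberg sieve developed elsewhere in the paper. The saving factor is controlled by a sifting sum over primes $p \equiv 3 \pmod 4$ involving the orders $r(p)$; Chebotarev (giving a density $1/2$ of such primes) together with standard bounds on the orders in elliptic curves over $\F_p$ guarantee that this sifting sum grows fast enough to produce the polylogarithmic saving $(\log B)^{-\omega}$ for some positive $\omega = \omega(E,P)$.

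The main obstacle is arranging the sieve hypotheses. Two points require work: verifying the level of distribution of the sequence $\{nP : |n| \leq B\}$ in residue classes modulo the moduli $r(p)$ (which requires control on the ranks of apparition as $p$ varies), and showing that the congruence conditions imposed at distinct sifting primes are sufficiently independent for the Selberg machinery to apply. Both inputs are precisely what the elliptic sieve of the paper is tailored to provide, so that once that machine is set up, the deduction of the theorem reduces to identifying the correct local densities and invoking the general sieve estimate.
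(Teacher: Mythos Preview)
Your framework is right—translate to the quaternion class $(-1,y)$ and sieve—but the sketch glosses over exactly the step the paper identifies as the main obstacle, and your proposed route through it does not work as stated.

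You write that ``standard EDS analysis shows \ldots\ the set of $n$ for which $v_p(\psi_n)$ is odd is determined by congruence conditions on $n$ modulo a small multiple of $r(p)$.'' The congruence part is fine: if $v_p(\beta_{r(p)})$ happens to be odd, then $v_p(\beta_n)$ is odd for all $n \equiv r(p), 2r(p), \ldots, (p-1)r(p) \pmod{p\,r(p)}$. The problem is the hypothesis. Whether $v_p(\beta_{r(p)})=1$ (or merely odd) is precisely the question of whether $p$ is a \emph{non-Wieferich prime for base $P$}, and it is not known for a single elliptic curve that there are infinitely many such primes (see \S\ref{sec:proof} and Remark~\ref{rem:odd_valuation}). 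So your sifting set could, for all we know, be finite; Chebotarev for $p\equiv 3\pmod 4$ does not help, because the Wieferich condition sits on top of it.

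The paper's workaround is to swap the quantifiers. Instead of fixing $p\equiv 3\pmod 4$ and asking whether $v_p(\beta_{r(p)})$ is odd, one fixes a prime index $\ell$ and evaluates $\chi(\pm|\beta_\ell|)$ for $\chi$ the character mod $4$. If this is $\neq 0,1$, then by multiplicativity \emph{some} prime $p\mid\beta_\ell$ satisfies $p\equiv 3\pmod 4$ with $v_p(\beta_\ell)$ odd—without having to identify which $p$. Many such $\ell$ are then produced by combining the periodicity of $\beta_n\bmod 4$ with control on the \emph{sign} of $\beta_\ell$. This is where the hypothesis $P\in E(\R)^0$ actually enters: it is not there to discard $n$ with $y(nP)<0$, as you suggest, but to make $\sign(\beta_\ell)$ computable via the Silverman--Stephens formula and an equidistribution argument for $\{\ell\beta/2\}$ over primes in a progression (Propositions~\ref{prop:silverman-stephens} and~\ref{prop:equidistribution}). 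Without this sign control the argument can genuinely fail, as Example~\ref{ex:counter-example} demonstrates. Once the set $\Lambda$ of good indices $\ell$ is in hand, the Selberg sieve is applied to the integers $n$ modulo these $\ell$, and the level-of-distribution issues you flag are in fact trivial; all the difficulty was in constructing $\Lambda$.
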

	
	Here \(E(\R)^0\) denotes the connected component of the identity of $E(\R)$, and $y(nP)$ denotes the $y$-coordinate of the point $nP$; this is a rational number and we are asking that this is the sum of two rational squares. The result shows that for almost all multiples of $P$, the $y$-coordinate is not a sum of two (rational) squares. (See \S~\ref{sec:lower_bounds} for a discussion on sharpness of this result.)
	
	The reader may  wonder: why consider the $y$-coordinate and not the $x$-coordinate? Well, the corresponding result is \emph{false} otherwise: Consider the elliptic curve
	\begin{equation} \label{eqn:bad}
		y^2=x^3-k^2.
	\end{equation}
	Evidently \(x=(y^2+k^2)/x^2\) is always a sum of two squares.
	Our methods are able to handle the \(x\)-coordinate problem, but only with additional assumptions on $E$ (see Theorem~\ref{thm:x}).
	
	The assumption that $P \in E(\R)^0$  is slightly deeper and is intimately connected with our method; however without it we can obtain counter-examples to similar statements (see Example~\ref{ex:counter-example}).

	\subsection{Conic bundles}
	To understand exactly what is happening, we put our results into the more geometric framework of \emph{conic bundles}.
	In an influential paper \cite{Ser90}, Serre proved that almost all plane conics over $\Q$ have no rational point, when ordered by the size of their coefficients. This was a special case of a more general result \cite[Thm.~2]{Ser90} on conic bundles $\pi:X \to \P^n_\Q$, which says that providing $\pi$ has no rational section we have
	\begin{equation} \label{eqn:Serre}
		\#\{ x \in \P^n(\Q) : H(x) \leq B, x \in \pi(X(\Q)) \} \ll_{X,E,P} B^{n+1}/(\log B)^{\Delta(\pi)}
	\end{equation}
	for some $\Delta(\pi) > 0$, where $H$ denotes the usual naive height on projective space. Here by a conic bundle, we mean a surjective morphism of varieties all of whose fibres are isomorphic to plane conics. This result was generalised in \cite[Thm.~1.2]{ls16} to  other  families of varieties over $\P^n$.
	
	One of the aims of the paper is to obtain a version of Serre's result for conic bundles over elliptic curves. Here the crucial concept is that of a \emph{non-split fibre}: this is an irreducible  fibre  isomorphic to $2$ lines over a quadratic extension (called the splitting field of the fibre). The relevant conic bundle for Theorem~\ref{thm:y} is 
	\begin{equation} \label{eqn:y}
		x_1^2 + x_2^2 = y x_0^2,
	\end{equation}
	where the equation is an affine patch of the surface in $\P^2 \times E$.
	Here a non-split fibre occurs over the point at infinity, but in the example \eqref{eqn:bad} one can check that every fibre of the relevant conic bundle is split. So to get savings one requires at least a non-split fibre; this is essentially the content of our next theorem.
	
	\begin{theorem} \label{thm:intro}
		Let $E$ be an elliptic curve over $\Q$ given by an integral Weierstrass equation
		and $\pi: X \to E$ a non-singular conic bundle.
		Let $P \in E(\Q)$ have infinite order with $P \in E(\R)^0$.
		Assume that $\pi^{-1}(mP)$ is non-split with imaginary quadratic splitting field,
		for some $m \in \Z$.
		Then there exists $\omega=\omega(X,E,P) > 0$ such that
		$$\#\{n \in \Z  \colon |n| \leq B, nP \in \pi(X(\Q))\} \ll_{X,E,P} B/(\log B)^{\omega}.$$
	\end{theorem}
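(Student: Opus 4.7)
The plan is to convert $nP \in \pi(X(\Q))$ into a statement about the specialisation of a Brauer class on $E$, and then apply a Selberg-type sieve to the elliptic divisibility sequence of $P$. I would let $\alpha \in \Br(U)$ denote the Brauer class of the generic fibre of $\pi$, where $U \subset E$ is the open complement of the non-split fibres. Since each fibre $\pi^{-1}(Q)$ is a smooth plane conic, Hasse--Minkowski gives that for $Q \in U(\Q)$, the condition $Q \in \pi(X(\Q))$ is equivalent to $\alpha(Q) = 0$ in $\Br(\Q)$, i.e.\ to $\inv_v(\alpha(Q)) = 0$ at every place $v$.

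Next, by purity for the Brauer group on the smooth curve $E$, the residue of $\alpha$ at the non-split point $mP$ is the class of the imaginary quadratic splitting field $K/\Q$ in $\HH^1(\Q,\Z/2)$. A standard residue calculation then yields the key local dichotomy: for every prime $p$ outside a fixed finite set, if $p$ is inert in $K$ and $nP \equiv mP \pmod{p}$ (and $nP$ avoids the other non-split points modulo $p$), then $\inv_p(\alpha(nP)) = 1/2$, which forces $nP \notin \pi(X(\Q))$. Let $\mathcal{P}$ be the set of such good inert primes. It then follows that if $nP \in \pi(X(\Q))$ then $p \nmid e_{n-m}$ for every $p \in \mathcal{P}$, where $(e_k)_{k \in \Z}$ is the elliptic divisibility sequence attached to $P$, so that $p \mid e_k$ iff $kP$ reduces to the identity modulo $p$.

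It therefore suffices to show that
\[
\#\bigl\{|n| \leq B : p \nmid e_{n-m}\ \text{for every}\ p \in \mathcal{P}\ \text{with}\ p \leq B^{\theta}\bigr\} \ll B/(\log B)^{\omega}
\]
for some $\theta, \omega > 0$. The density of $n$ with $p \mid e_{n-m}$ equals $1/N_p$, where $N_p$ denotes the order of $P$ in $E(\F_p)$, and the sieve conditions at distinct primes are quasi-independent via the Chinese Remainder Theorem on $E$. A Selberg sieve computation then yields the desired power of $\log B$, provided one can establish the divergence
\[
\sum_{\substack{p \in \mathcal{P}\\ p \leq B^{\theta}}} \frac{\log p}{N_p} \gg \log B.
\]
Establishing this inequality is the main obstacle: it combines Chebotarev (supplying the half-density of inert primes in $K$) with primitive-divisor-type lower bounds on $N_p$ for elliptic divisibility sequences in the style of Silverman.

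Finally, the hypothesis $P \in E(\R)^0$ is used to control the real invariant. Since $E(\R)^0$ is a subgroup, $nP$ remains in a single connected component of $E(\R)$ for all $n$, and, combined with $K$ being imaginary quadratic, this makes $\inv_\infty(\alpha(nP))$ constant in $n$. If this constant is nonzero the theorem is immediate; otherwise the finite-prime sieve above applies uniformly to all $|n|\leq B$ and produces the stated bound.
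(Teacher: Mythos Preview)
Your key local dichotomy is incorrect, and this is exactly the central difficulty the paper has to overcome. After translating so that $\alpha$ is ramified at $O$, the residue computation (Proposition~\ref{prop:has_Q_p_point}) gives
\[
\inv_p\bigl(\alpha(nP)\bigr) \equiv \tfrac{1}{2}\,v_p(nP) \pmod{\Z}
\]
for $p$ inert in $K$ with $v_p(nP)\ge 1$. Thus $nP \equiv O \pmod p$ only yields $v_p(nP)\ge 1$; if this valuation happens to be even the invariant is $0$, not $1/2$. Consequently your sieve condition ``$p\nmid e_{n-m}$ for all $p\in\mathcal P$'' does \emph{not} follow from $nP\in\pi(X(\Q))$, and the argument collapses at this step. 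Controlling $v_p(nP)\bmod 2$ amounts to asking that $p$ be a non-Wieferich prime for base $P$ (Remark~\ref{rem:odd_valuation}); it is not known that any elliptic curve has infinitely many such primes, so one cannot simply shrink $\mathcal P$ to those.

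The paper's remedy is to sieve modulo $p\ell_p$ rather than modulo $p$, where $\ell_p=\ord(P\bmod p)$: for $n\equiv j\ell_p\pmod{p\ell_p}$ with $1\le j\le p-1$ one has $v_p(nP)=v_p(\ell_p P)$ exactly (Lemma~\ref{lem:formal-group}), so it suffices to produce many primes $p$ with $v_p(\ell_p P)$ odd. This is Proposition~\ref{prop:odd_valuation}, whose proof combines periodicity of the EDS $\beta_n$ modulo $q(\chi)$, the Silverman--Stephens sign theorem, and effective equidistribution of $\{\ell\beta/2\}$ over primes in a progression. The hypothesis $P\in E(\R)^0$ enters precisely here, to control $\sign(\beta_\ell)$ so that $\chi(|\beta_\ell|)\ne 0,1$ for many primes $\ell$; it is not used via the archimedean invariant as you suggest. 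Your real-place remark is correct as far as it goes, but it does not explain why the hypothesis is essential: in Example~\ref{ex:counter-example} the real invariant already vanishes on $2\Z P$, and the conclusion fails for purely finite-adic reasons---for the even multiples no inert prime ever divides the relevant $e_{2n-m}$, so a sieve of your shape gives no saving there.
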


	The theorem says that under some technical assumptions, for almost all multiples of a given non-torsion rational point the associated conic has no rational point.
	The assumption $P \in E(\R)^0$ may look artificial, however it is  \textit{necessary} for the conclusion. 
	
	\begin{example} \label{ex:counter-example}
		Consider the elliptic curve with the point $P$ of infinite order
		$$E: y^2 = x(x+2)(x-3), \quad P = (-1,2)$$
		and  the conic bundle
		$$t_0^2 + t_1^2 = (x - x_1)(x - x_3)t_2^2$$
		where $x_i$ denotes the $x$-coordinate of $iP$.
		This has non-split fibres over $ \pm P$ and $\pm 3P$.
		Here $P \notin E(\R)^0$ so the assumptions of Theorem~\ref{thm:intro}
		don't hold.
		
		We claim that the fibre over every
		\textit{even} multiple of $P$ contains a rational point, so the conclusion
		of Theorem~\ref{thm:intro} in fact does not hold either.
		This is a special case of a more
		general construction (Proposition~\ref{prop:ramified}), but we explain the key ideas
		here.
		
		Firstly one checks that the fibre over $O$ has a rational point.
		So let $2nP$ be a non-trivial even multiple of $P$.
		It is clear there are always $p$-adic points
		for $p \equiv 1 \bmod 4$.
		We will show that for 
		every prime 	$p \equiv 3 \bmod 4$, we have $v_p((x-x_1)(x-x_3)) = 0$,
		which implies that the fibre has a $\Q_p$-point.
		Moreover as every element of $2 \Z P$ lies in
		the real component  of the identity it satisfies $x \geq 3$. Hence $(x - x_1)(x - x_3) > 0$
		so the conic has a real point. Hilbert's version of quadratic reciprocity now
		shows that every conic has a $\Q_2$-point, hence has a $\Q$-point by Hasse-Minkowski.

		So assume for a contradiction that there is some even multiple $2nP$ and
		$p \equiv 3 \mod 4$ such that $v_p((x-x_1)(x-x_3)) > 0$. (One can check that
		$v_p(x_3) \geq 0$ for all $p \equiv 3 \bmod 4$, so the valuation cannot be negative.)
		Suppose for example that $v_p(x  - x_1) > 0$, so that $2nP \equiv \pm P \bmod p$.
		Then $P$ is divisible by $2$ modulo $p$. However our example was chosen
		so that $2$-division field of $P$ is $\Q(i)$ (this can be shown using the criterion
		from \cite[\S2]{BZ18}), and since $p$ is inert in
		$\Q(i)$ it follows that $P$ is \textit{not} $2$-divisible modulo $p$; a contradiction.
		The case $2nP \equiv \pm 3P \bmod p$ is analogous.
	\end{example}
	
\subsection{Proof ingredients}\label{sec:proof}
Our key tools are sieves and elliptic divisibility sequences.

	In the classical sieve setting one usually sieves with respect to the homomorphisms $\Z \to \Z/p\Z$, for primes $p$, or more general prime powers. We originally tried to mimic this setting by sieving with respect to the homomorphisms $E(\Q) \to E(\F_p)$,  inspired by Kowalski's elliptic sieve \cite[\S 4.4]{kow08} used to study prime divisors in elliptic divisibility sequences. However our method quickly diverges from the classical setting and Kowalski's setting, as in our case information modulo $p$ is insufficient. A significant technical step in our proof is trying to control the $p$-adic valuations of the rational points we are sieving, which we achieve by  sieving modulo $p^{n_p}$ for varying primes $p$ and growing exponents $n_p$, so our sieve has no classical analogue. This difficulty is related to the fact that elliptic curves do not satisfy weak approximation, and does not arise in the classical sieve setting where $p$-adic valuations are easy to control. Our exact valuation theoretic problems are closely related to $p$	being a ``non-Wieferich prime for base $P\in E$'' in the sense of Voloch~\cite{V00}, and it is not even known whether there 	exists a single elliptic curve with infinitely many such primes \cite{Sil88}. These issues greatly complicate our sieve set up, and we have to work with the filtration structure on $E(\Q_p)$  to control valuations. (See Remark~\ref{rem:odd_valuation}.)
	
	To come up with a sieve criterion we  use elliptic divisibility sequences: these are  defined via the denominators of the $x$-coordinates of the multiples of $P$, multiplied by a sign to obtain better recurrence properties. We recall the relevant definitions in \S\ref{sec:EDS}. The key property for us is that elliptic divisibility sequences are periodic modulo an arbitrary integer (Proposition~\ref{prop:actually-periodic}). We achieve this using work  of Verzobio~\cite{Ver21} and does not seem to have been proven  in the literature before in this generality. In our proof we also have to be careful with signs, which requires us to use the work \cite{SS06} as well as equidistibution results for multiples of irrational numbers modulo $1$. Whilst these sign issues may seem a mere technical step, in fact they are crucial and related to our necessary assumptions on the real components of $E(\R)$ (cf.~Example~\ref{ex:counter-example}).

	Theorem~\ref{thm:intro} is a quantitative strengthening of a result of the fourth-named author and Berg \cite{bn19}, which proves under suitable assumptions that for certain conic bundles $\pi:X \to E$, the image $\pi(X(\Q))$ does not contain a translate of a subgroup of finite index.  In \cite{bn19} the authors only consider  elliptic curves which are Galois general in a sense captured by conditions (1)-(4) in their Theorem~3.5, and their results only apply to special conic bundles given by pulling back Ch\^{a}telet surfaces from $\P^1$ which also have a non-split fibre over a rational point. Our results apply to an overlapping collection of elliptic curves and conic bundles, but the key point  is that our conclusion is stronger:  a subset of $\Z$ which contains no arithmetic progression may still have positive density (e.g.~the set of squarefree numbers in $\Z$).

\subsection{Lower bounds and counting by height} \label{sec:lower_bounds}
		
	We believe that our paper demonstrates the usefulness of sieve techniques and elliptic divisibility sequences to counting problems on elliptic curves. The proof of Theorem~\ref{thm:intro} gives an explicit value for $\omega$, but we doubt that our upper bound is sharp. 
	Proving any kind of lower bound seems very difficult in general; we are only able to do this in various trivial cases where $\omega = 0$, so that $\pi(X(\Q))$ has positive density in $E(\Q)$ (see \S\ref{sec:examples}). The following question seems quite challenging.
	
	\begin{question}\label{q:infinite}
		Does there exist an elliptic curve $E$ over $\Q$ such that the set
		\begin{equation}\label{eqn:infinite}
			\{(x,y) \in E(\Q) \colon y \text{ is a sum of two squares} \}
		\end{equation}
		is infinite?
	\end{question}
	
	Standard conjectures in arithmetic geometry  seem to have nothing to say about this question, as the associated conic bundle surface is neither rationally connected nor of general type (over $\Q(i)$ it is birational to $\P^1 \times E$).
	
	However, the following heuristic suggests \eqref{eqn:infinite} should be quite sparse. Let  \(E(\Q)\) have rank \(r\). Fix a norm \(\|\,\cdot\,\|\) on \(\R^r\). The numerator and denominator of {\(y( n_1P_1+\dotsb+ n_r P_r)\) are both integers of  size  \( \exp(O_{E,P}(\|\vec{n}\|^2))\)}, with the denominator being a perfect cube. A proportion \(1/\|\vec{n}\|^2\) of such rational numbers are sums of two squares. One might therefore  {speculate that  the set \eqref{eqn:infinite} has size \( \ll \sum_{\vec{n}\in\N^r} 1/\|\vec{n}\|^2 \), so can be infinite only when \(r>1\).}
	
	Versions of this problem were raised by Poonen~\cite[Questions~23, 33]{AIM02} and Browning~\cite[Problem~10, pp3181-2]{Bro19}. Browning in particular asked about the number of points for which the denominator of \(y\) is a sum of two squares. A similar heuristic suggests the number of points \(\{nP: n \leq B\}\) with this property might be around \(\sum_{n\leq B}1/n \sim \log B\). Our methods give upper bounds for problem  without alteration.
		
	One can rephrase our results in terms of the \textit{canonical height} $\widehat{h}$ on $E$, as $nP$ has height roughly $n^2$. In this language the heuristic just discussed suggests the following.
\begin{align*}
	\MoveEqLeft[15]
	\#\{Q \in E(\Q):\widehat{h}(Q) \leq H,  y(Q) \text{ is a sum of two squares} \}
\\
&	\asymp \begin{cases}
	1,& \text{if }\rank E(\Q)\leq 1,
	\\
	\log H, & \text{if }\rank E(\Q)=2,
	\\
	H^{(r-2)/2}, & \text{if }\rank E(\Q)>2;
	\end{cases}
	\\
	\MoveEqLeft[15]
	\#\{Q \in E(\Q):\widehat{h}(Q) \leq H,  \text{the denominator of } y(Q) \text{ is a sum of two squares} \}
	\\&	\asymp \begin{cases}
	\log H, & \text{if }\rank E(\Q) = 1,
	\\
	H^{(r-1)/2}, & \text{if }\rank E(\Q)\geq 2.
	\end{cases}
\end{align*}

	Our present method cannot handle the case of rank \(>1\); the key stumbling block is that we have no control over the prime $p$ constructed in Proposition~\ref{prop:odd_valuation}, which would be necessary to combine $p$-adic information at sums of points. We are however able to prove  non-trivial upper bounds provided the curve has rank $1$. Rather than stating the most general result in terms of conic bundles, we content ourselves with the following variant of Theorem~\ref{thm:y}. 
	
	\begin{theorem} \label{thm:rank_1}
		Let $E$ be an elliptic curve over $\Q$ given by an integral Weierstrass equation.
		Assume that $E$ has rank $1$. Let 
		\[
		\eta = \begin{cases}
		1&\text{ if }\# E(\Q)^{\tors}\text{ has two distinct prime divisors,}
		\\
		-1 &\text{ otherwise.}
		\end{cases}
		\]
		Then
		\begin{multline*}
			\#\left\{Q \in E(\Q) \cap E(\R)^0 :
			\begin{array}{l}
				\widehat{h}(Q) \leq H, y(Q) \text{ is a }\\
				\text{sum of two squares} 
			\end{array}
			\right\}
			\ll_E 
			\frac{H^{1/2}(\log\log \log H)^{\eta/2}}{(\log\log H)^{1/2}}.
\end{multline*}
	\end{theorem}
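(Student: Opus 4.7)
The plan is to reduce to a one-variable sieve problem and apply a Selberg-type upper bound. Fix a generator $P$ of the free part of $E(\Q)$, so every rational point is written uniquely as $Q = nP + T$ with $n \in \Z$ and $T \in E(\Q)^{\tors}$. Since $\widehat{h}(Q) = n^2 \widehat{h}(P)$, the height constraint $\widehat{h}(Q) \leq H$ becomes $|n| \leq B$ with $B := (H/\widehat{h}(P))^{1/2}$, and the constraint $Q \in E(\R)^0$ cuts out at most one residue class of $n$ modulo $2$, determined by whether $P$ and $T$ lie in $E(\R)^0$. After summing over the finitely many torsion elements $T$ and recalling that a positive rational is a sum of two rational squares if and only if its $p$-adic valuation is even at every prime $p \equiv 3 \pmod 4$, we are reduced to bounding, for each choice of $T$ and admissible progression, the number of $n \in [-B, B]$ in that progression for which $v_p(y(nP + T))$ is even at every such prime. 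The sign analysis underlying Theorem~\ref{thm:y} ensures positivity of $y(nP + T)$ up to finitely many exceptions.

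The next step is to set up a Selberg upper bound sieve on the one-variable sequence $a_n := y(nP + T)$. The periodicity of the elliptic divisibility sequence modulo prime powers (Proposition~\ref{prop:actually-periodic}), combined with the filtration analysis on $E(\Q_p)$ developed for Theorem~\ref{thm:y}, provides for each prime $p \equiv 3 \pmod 4$ a local density $\omega_p$ of residues (modulo an appropriate prime power) on which $v_p(a_n)$ is odd. For primes of good reduction one has $\omega_p \asymp 1/\rho(p)$, where $\rho(p)$ is the order of the image of $P + T$ in $E(\F_p)$, up to lower-order corrections governed by whether $v_p(a_{\rho(p)}) = 1$. The Selberg upper bound at level $B^{1/2}$ then yields
\[
\#\bigl\{n : |n| \leq B,\ a_n \text{ is a sum of two squares}\bigr\} \ll_E \frac{B}{G(B)^{1/2}}, \qquad G(B) := \sum_{\substack{p \leq B^{1/2} \\ p \equiv 3\,(4)}} \omega_p,
\]
provided the level-of-distribution requirements are verified using the periodicity statement and bounds on the periods.

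Finally, one analyses $G(B)$. Mertens-type estimates for primes in arithmetic progressions, combined with Chebotarev information about the image of Galois on the torsion of $E$, give $G(B) = \tfrac{1}{2} \log\log B + O(\log\log\log B)$. The precise secondary correction produces the factor $(\log\log\log H)^{\eta/2}$: the sign of $\eta$ is determined by tracking how the rational torsion subgroup $E(\Q)^{\tors}$ interferes with the set of primes $p \equiv 3 \pmod 4$ for which $P + T$ generates $E(\F_p)$. When $\#E(\Q)^{\tors}$ has two distinct prime divisors, a shift in the effective density enlarges $G(B)$ by a $\log\log\log$ factor, giving $\eta = +1$; otherwise the correction is reversed and $\eta = -1$. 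Combining with $B \asymp H^{1/2}$ and summing over torsion then yields the claimed bound. The main obstacle is precisely this secondary analysis: extracting $\eta$ from $G(B)$ demands controlling the interplay between the torsion structure of $E(\Q)$ and the distribution of $\rho(p)$ for $p \equiv 3 \pmod 4$, while simultaneously verifying the level-of-distribution hypothesis needed to run the Selberg sieve at level $B^{1/2}$.
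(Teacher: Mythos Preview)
Your approach has a genuine gap, and the mechanism you propose for the $(\log\log H)^{-1/2}$ saving and the $(\log\log\log H)^{\eta/2}$ correction is not the one that actually works.

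First, an arithmetic point: if $G(B)=\tfrac12\log\log B + O(\log\log\log B)$ as you claim, then
\[
G(B)^{-1/2}=(\tfrac12\log\log B)^{-1/2}\bigl(1+O(\log\log\log B/\log\log B)\bigr),
\]
so an \emph{additive} secondary term of size $\log\log\log B$ in $G(B)$ is absorbed into a $1+o(1)$ factor and cannot produce a \emph{multiplicative} factor $(\log\log\log H)^{\pm 1/2}$. So even on its own terms your explanation of where $\eta$ comes from fails. (Incidentally, the sign of $y(nP+T)$ is not eventually constant: Proposition~\ref{prop:silverman-stephens} shows it oscillates.)

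Second, and more structurally, the paper does not obtain the $(\log\log H)^{-1/2}$ by running a single Selberg sieve on $n\mapsto y(nP+T)$. The per-basepoint sieve (Theorem~\ref{thm:Brauer_intro}) only yields $B/(\log B)^{\omega}$ with an exponent $\omega=1/(2\varphi(\pi))$ that is \emph{fixed} and small, not $1/2$; there is no proof in the paper (nor do you supply one) that $\sum_{p\le B^{1/2}}\omega_p$ reaches $\tfrac12\log\log B$. What changes between Theorem~\ref{thm:y} and Theorem~\ref{thm:rank_1} is that the \emph{implicit constant} in Theorem~\ref{thm:non-abelian} is tracked explicitly as $\exp\bigl(O_E(\widehat{h}(P)/\log\widehat{h}(P))\bigr)$ (Lemma~\ref{lem:explicit-constant}). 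The proof then covers $E(\Q)\cap E(\R)^0=\langle P_0\rangle\oplus G$ by cyclic subgroups $\langle kP_0+g\rangle$ with $g\in G$ and $k$ supported on the primes $S$ dividing $\#G$; for $k\le\kappa$ it applies Theorem~\ref{thm:non-abelian} to the basepoint $kP_0+g$ (cost $\exp(O_E(\kappa^2/\log\kappa))H^{1/2}/(\log H)^\omega$), and for $k>\kappa$ it uses the trivial bound $\#\{Q\in\langle kP_0+g\rangle:\widehat h(Q)\le H\}\ll H^{1/2}/k$. Summing the latter over $S$-smooth $k>\kappa$ gives $H^{1/2}(\log\kappa)^{\#S-1}/\kappa$, and optimising $\kappa\asymp(\log\log H\cdot\log\log\log H)^{1/2}$ produces the stated bound. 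The parameter $\eta$ is thus $2\#S-3$: it records whether one or two primes divide $\#G$, through the elementary count of $S$-smooth integers in dyadic ranges --- not through any Chebotarev interference of torsion with $E(\F_p)$.
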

	
	The total number of rational points in \( E(\R)^0\) of height at most $H$ is $\gg H^{1/2}$, since \(2E(\Q)\subset E(\R)^0\), so the theorem indeed shows that $0\%$ of these have $y$-coordinate which is a sum of two squares. Note that if \(E(\R)\) is connected, then the upper bounds applies to all rational points on $E$.

	\subsection{Generalisation to Brauer groups}
	We now state our most general result, of which Theorem~\ref{thm:intro} is a special case. Firstly, we are able to prove results for conic bundles whose non-split fibres have real quadratic splitting field. Secondly, we make explicit the dependence of  $\omega$ and the leading constant on $P$. Thirdly, our methods are sufficiently robust that they  allow applications to specialisations of Brauer group elements on elliptic curves. This is also the viewpoint taken by Serre in his paper \cite{Ser90}, as well in the more recent papers \cite{Lou13,ls16}. Brauer groups are formally easier to work with than conic bundles, since one does not require explicit equations and one can make use of Grothendieck's residue map. 
	Here we take a Brauer group element which is ramified at a rational point. The ramification gives rise to a cyclic extension of $\Q$ to which we associate a Dirichlet character using Kronecker--Weber (see \S\ref{sec:denom} for details and relevant background on Brauer groups). Our result here is as follows. 
	
	\begin{theorem} \label{thm:Brauer_intro}
		Let $E$ be an elliptic curve over $\Q$ given by an integral Weierstrass equation. Let $P \in E(\Q)$ have infinite order. Assume there is \(m\in\Z\) such that $b \in \Br \Q(E)$ is ramified at \(mP\) and let the associated 
		Dirichlet character $\chi$ have modulus \(q(\chi)\).
		Let $\beta_n$ be the elliptic divisibility sequence associated to $P$ and
		let $\pi$ be
		the period of  the sequence $\beta_n \bmod q(\chi)$.
		
		Assume that there is
		some index \(\alpha\in\N\) with $\gcd(\alpha,\pi) = 1$ which  satisfies either
		\begin{enumerate}
			\item  $\chi(|\beta_\alpha|) \neq 0,1$, or\label{itm:nonresidue-I}
			\item  $\chi(-|\beta_\alpha|) \neq 0,1$ and  $P \in E(\R)^0$, or \label{itm:nonresidue-II}
			\item $\chi(-|\beta_\alpha|) \neq 0,1$ and $4 \nmid \pi$.\label{itm:nonresidue-III}
		\end{enumerate}
		Then we have \(\pi\ll_{E,\chi} 1\) and
		$$
		\#\{|n| \leq B : b(nP) = 0 \in \Br \Q\} \leq 
		C_{E,P,b}
		\frac{B\log\log B}{
			(\log B)^{1/2\varphi(\pi)}},
		$$
		where  $\varphi$ denotes Euler's totient function and \(C_{E,P,b}\) is a positive constant depending  on \(E,P\) and \(b\) only and given by \eqref{eqn:constant} below.
	\end{theorem}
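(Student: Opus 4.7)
The plan is to reduce the condition $b(nP) = 0 \in \Br \Q$ to a character/divisibility condition on the elliptic divisibility sequence $\beta_n$ associated with $P$, and then estimate the count with a Selberg sieve adapted to this sequence.

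First I would analyse the condition $b(nP) = 0$ locally using the Grothendieck residue map. Since $b \in \Br \Q(E)$ is ramified at $mP$ with residue encoding $\chi$ via Kronecker--Weber, for all but finitely many primes $p$ one has $\inv_p b(nP) = 0$ unless $nP$ specialises to $mP$ modulo $p$, and in that case $\inv_p b(nP)$ equals $\chi$ evaluated at a uniformising datum, governed by $v_p$ of a local equation for $mP$. After translating so that $m=0$, the set of primes where $nP$ reduces to the identity is, outside a bounded exceptional set depending on $E,P,b$, exactly the support of $\beta_n$; the sum of local invariants therefore collapses to the condition $\chi(\varepsilon_n |\beta_n|) = 1$, where $\varepsilon_n \in \{\pm 1\}$ is a sign determined by the real component in which $nP$ lies. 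The bound $\pi \ll_{E,\chi} 1$ follows from the explicit periodicity of $\beta_n$ modulo fixed moduli established in Proposition~\ref{prop:actually-periodic}.

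Second I would exploit this periodicity: since $\chi(\varepsilon|\beta_n|)$ depends only on $n \bmod \pi$ once the sign $\varepsilon$ is fixed, hypotheses \eqref{itm:nonresidue-I}--\eqref{itm:nonresidue-III} supply a residue class $\alpha$ coprime to $\pi$ in which $\chi(\varepsilon |\beta_\alpha|) \neq 0,1$. On the arithmetic progression $n \equiv \alpha \pmod{\pi}$, which has relative density $1/\varphi(\pi)$ among units modulo $\pi$, the equality $\chi(\varepsilon_n|\beta_n|) = 1$ can hold only if the remaining prime factors of $\beta_n$ contribute a compensating nontrivial character value, i.e.\ the primes dividing $\beta_n$ must lie in a proper subset of $(\Z/q(\chi))^\times$. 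I would then apply the Selberg sieve to $(|\beta_n|)_{n \equiv \alpha\,(\pi)}$, sifting out the ``allowed'' Frobenius classes and verifying the local densities via Chebotarev. This yields the saving $(\log B)^{-1/(2\varphi(\pi))}$: the factor $1/\varphi(\pi)$ comes from restricting to one of $\varphi(\pi)$ residue classes, the $1/2$ is the Selberg exponent, and a $\log\log B$ inflation absorbs prime powers of $\beta_n$ and the bounded exceptional primes.

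The main obstacle will be controlling the \emph{exact} $p$-adic valuation $v_p(\beta_n)$ rather than merely divisibility, since the local invariant $\chi^{v_p(\beta_n)}$ depends on $v_p(\beta_n) \bmod \ord(\chi)$, and non-Wieferich prime phenomena prevent one from isolating $v_p(\beta_n)=1$ at a single prime. I would handle this by sieving modulo $p^{n_p}$ with growing exponents $n_p$, using the filtration on $E(\Q_p)$ to convert knowledge of $nP \bmod p^{n_p}$ into exact valuation information, as flagged in Remark~\ref{rem:odd_valuation}. Tracking the sign $\varepsilon_n$ is the secondary difficulty and requires equidistribution of $n$ times an irrational modulo $1$ in the spirit of \cite{SS06}; this is what forces the trichotomy in the hypothesis, since case~\eqref{itm:nonresidue-I} avoids signs entirely, case~\eqref{itm:nonresidue-II} pins down $\varepsilon_n = +1$ via $P \in E(\R)^0$, and case~\eqref{itm:nonresidue-III} uses $4 \nmid \pi$ to ensure $\chi$ cannot distinguish the two possible signs along the progression.
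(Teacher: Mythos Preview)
The reduction in your first paragraph is not valid: the condition $b(nP)=0\in\Br\Q$ does \emph{not} collapse to $\chi(\varepsilon_n|\beta_n|)=1$. The theorem only assumes $b$ is ramified \emph{at} $mP$, not exclusively there (for instance the algebra $(-1,y)$ relevant to Theorem~\ref{thm:y} is ramified at $O$ and at all the $2$-torsion), so reductions of $nP$ to the other ramification points contribute local invariants that your formula ignores. Even if $O$ were the sole ramification point, $b(nP)=0$ means \emph{every} local invariant vanishes; there is no way to package the invariants at the finitely many bad primes and at infinity into a single character value of $|\beta_n|$. Your second paragraph then contradicts itself: if $\chi(\varepsilon|\beta_n|)$ really depended only on $n\bmod\pi$, then on the progression $n\equiv\alpha\pmod\pi$ it would equal $\chi(\varepsilon|\beta_\alpha|)\neq 1$ identically and there would be nothing left to sieve --- but that excludes only one residue class of $n$ out of $\pi$ and cannot produce any power of $\log B$.

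The paper's architecture is different. It never characterises $b(nP)=0$; it finds instead, for many $n$, a \emph{single} prime $p$ with $\inv_p b(nP)\neq 0$, which already forces $b(nP)\neq 0$. The hypothesis on $\beta_\alpha$ is applied not to the running index $n$ but to auxiliary \emph{primes} $\ell$: Proposition~\ref{prop:odd_valuation} shows that a proportion at least $1/(2\varphi(\pi))$ of primes $\ell$ have (after the sign adjustment you describe) $\chi(|\beta_\ell|)\neq 0,1$, whence some prime $p\mid\beta_\ell$ satisfies $\ord(\chi(p))\nmid v_p(\beta_\ell)$. For such $\ell$ one has $\ell=\ord(P\bmod p)$, and the filtration (Lemma~\ref{lem:formal-group}) forces $v_p(nP)=v_p(\ell P)$ exactly whenever $\ell\mid n$ but $p\ell\nmid n$; Proposition~\ref{prop:has_Q_p_point} then gives $\inv_p b(nP)\neq 0$. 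These $\ell$ form the sifting set $\Lambda$ for a Selberg sieve on the \emph{index} $n\in[-B,B]$, removing one class modulo each $\ell\in\Lambda$, with a separate estimate disposing of the error term from the excluded class $0\bmod p\ell$. The exponent $1/(2\varphi(\pi))$ is thus the density of $\Lambda$ among the primes, not a product of a ``Selberg $\tfrac12$'' with $1/\varphi(\pi)$ as you suggest.
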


	In the statement $b(Q) \in \Br \Q$ denotes the evaluation of the Brauer element $b$ at $Q$. We abuse notation slightly and implicitly ignore the finitely many points where $b$ is not defined.
	
	The Brauer group framework essentially allows us to replace quadratic extensions by arbitrary cyclic extensions and conic bundles by higher-dimensional Brauer--Severi varieties. Moreover we can even handle some non-abelian extensions. As an example application in the style of Theorem~\ref{thm:y}, we have the following.
	
	\begin{theorem} \label{thm:non-abelian}
		Let $K/\Q$ be a number field which contains a cyclic subfield which is not totally real.
		Let $E$ be an elliptic curve over $\Q$ given by an integral Weierstrass equation and let \(P\in E(\Q)\) be a point of infinite order with $P \in E(\R)^0$.
		Then there exists $\omega=\omega(E,K)> 0$ such that 
		$$\#\{|n| \leq B \colon y(nP) \text{ is a norm from } K \} \ll_{E,P,K}
		B/(\log B)^{\omega},$$
		where the implicit constant is \(\exp( O_{E,K}({\widehat{h}(P)}/{\log \widehat{h}(P)})) \).
	\end{theorem}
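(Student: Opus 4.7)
The plan is to deduce Theorem~\ref{thm:non-abelian} from Theorem~\ref{thm:Brauer_intro} via a cyclic algebra. By hypothesis $K$ contains a cyclic subfield $L/\Q$ which is not totally real. Transitivity of the norm gives $\Norm_{K/\Q} = \Norm_{L/\Q}\circ \Norm_{K/L}$, so any norm from $K$ is automatically a norm from $L$; it therefore suffices to bound the set of $|n| \leq B$ for which $y(nP) \in \Norm_{L/\Q}L^\times$. Fix a generator $\sigma$ of $\Gal(L/\Q)$ and set $d=[L:\Q]$. The cyclic algebra
\begin{equation*}
b = (L/\Q,\sigma,y) \in \Br \Q(E)
\end{equation*}
has the property that $b(Q) = 0 \in \Br \Q$ precisely when $y(Q)$ is a norm from $L$, so reducing to Theorem~\ref{thm:Brauer_intro} requires producing an $m \in \Z$ at which $b$ is ramified and then verifying one of the conditions \eqref{itm:nonresidue-I}--\eqref{itm:nonresidue-III}.

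Since $L/\Q$ is cyclic and not totally real, complex conjugation is the unique element of order $2$ in $\Gal(L/\Q)$; hence $d$ is even and the Dirichlet character $\chi_L$ attached to $L$ by Kronecker--Weber satisfies $\chi_L(-1) = -1$. The divisor of the coordinate function $y$ on $E$ is $T_1+T_2+T_3-3\,O$, where the $T_i$ are the nontrivial $2$-torsion points, so $\ord_O(y)=-3$. Consequently the residue of $b$ at $O$ is the Dirichlet character $\chi := \chi_L^{-3}$, which is nontrivial because $d$ is even and so $d \nmid 3$. Thus $b$ is ramified at $mP$ with $m=0$, and $\chi(-1)=\chi_L(-1)^{-3}=-1$.

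To invoke Theorem~\ref{thm:Brauer_intro} I would then check condition~\eqref{itm:nonresidue-II} with $\alpha=1$. The coprimality $\gcd(1,\pi)=1$ is automatic, and under the standard normalisation $\beta_1 = 1$ of the elliptic divisibility sequence attached to $P$ one has $\chi(-|\beta_1|) = \chi(-1) = -1 \notin \{0,1\}$. Combined with $P \in E(\R)^0$, this verifies condition~\eqref{itm:nonresidue-II}, and Theorem~\ref{thm:Brauer_intro} yields
\begin{equation*}
\#\{|n|\leq B : b(nP) = 0\} \leq C_{E,P,b}\,\frac{B\log\log B}{(\log B)^{1/(2\varphi(\pi))}},
\end{equation*}
where $\pi \ll_{E,K} 1$ is the period of $\beta_n \bmod q(\chi)$. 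Absorbing the $\log\log B$ factor into the power of $\log B$ gives the advertised bound of shape $B/(\log B)^{\omega}$ for any positive $\omega < 1/(2\varphi(\pi))$, an exponent depending only on $E$ and $K$.

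The only remaining point, which I expect to be routine rather than obstructive, is to confirm that the $P$-dependence of the constant $C_{E,P,b}$ coming from \eqref{eqn:constant} matches the shape $\exp(O_{E,K}(\widehat{h}(P)/\log \widehat{h}(P)))$ claimed in the statement. This reduces to the observation that the Brauer element $b$, and hence all ancillary data such as $q(\chi)$ and $\pi$, depends only on $E$ and on the chosen cyclic subfield $L \subseteq K$, so that every contribution to \eqref{eqn:constant} apart from the explicit $P$-dependent terms is absorbed into the $O_{E,K}$ constant.
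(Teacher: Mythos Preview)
Your proposal is correct and follows essentially the same route as the paper's proof: reduce to the cyclic non-totally-real subfield $L$, form the cyclic algebra $b=(y,\chi_L)$, observe that the residue at $O$ is $\chi_L^{-3}$ (odd since $\chi_L$ is odd), and invoke Theorem~\ref{thm:Brauer_intro} via condition~\eqref{itm:nonresidue-II} with $\alpha=1$ and $m=0$. The one point you flagged as routine---bounding the $P$-dependence of $C_{E,P,b}$ from \eqref{eqn:constant}---is dispatched in the paper by Lemma~\ref{lem:explicit-constant}, which bounds $\#T$ (the number of primes dividing the denominator of $x(P)$) by $O_E(\widehat{h}(P)/\log\widehat{h}(P))$ using $\omega(n)\ll \log n/\log\log n$ together with $\log e_1 \leq \widehat{h}(P)+O_E(1)$.
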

	
	The fields $K = \Q(\sqrt[n]{a}, \mu_n)$ satisfy these hypotheses, for $a \in \Q^\times$ and $n \geq 3$. The explicit dependence on the point \(P\) is included as it is needed to prove Theorem~\ref{thm:rank_1}. (A version of Theorem~\ref{thm:rank_1} replaced with the condition that \(y(Q)\) is a norm from \(K\), where \(K\) is of the type in Theorem~\ref{thm:non-abelian}, would follow by the same proof.)
	
	Let us consider the technical assumptions on elliptic divisibility sequences in Theorem~\ref{thm:Brauer_intro}. We suspect that Condition \eqref{itm:nonresidue-I} holds for all but finitely many $\chi$ of given order; but it seems incredibly difficult to prove this, and even the analogous statement for the much simpler cases of Fibonacci or Mersenne numbers seems to be an open problem \cite{LM,Sun}. However if $|\beta_\alpha|$ is a non-square (say), which indeed holds for all but finitely many $\alpha$ \cite[Thm.~1.1]{ERS07}, then $\chi(|\beta_\alpha|) \neq 0,1$ for a positive proportion of quadratic Dirichlet characters; the challenge is it show that these characters cover all but finitely many as $\alpha$ varies. We are able to show the modest result that Condition \eqref{itm:nonresidue-I} holds $100\%$ of the time under suitable assumptions; see \S\ref{sec:100} for details. 
	
	As for our applications, if $\chi$ is an odd Dirichlet character  then we simply use that $\beta_1 = 1$ and $\chi(-1) = -1 \neq 0,1$ to see that Conditions \eqref{itm:nonresidue-II} or \eqref{itm:nonresidue-III} are satisfied. This is what makes stating Theorem~\ref{thm:intro} so simple, as for an imaginary quadratic extension $\Q(\sqrt{D})$ with $D$ a fundamental discriminant, the associated Dirichlet character is simply the Kronecker symbol $\left(\frac{D}{\cdot}\right)$, which takes the value $-1$ at $-1$ as $D$ is negative.

	We finish by returning to Example~\ref{ex:counter-example}.
	
	\begin{example} \label{ex:crazy}
		Consider the elliptic curve from Example~\ref{ex:counter-example}
		$$E: y^2 = x(x+2)(x-3)$$
		with $P = (-1,2)$.
		As we have seen, this pair does not satisfy the conclusion of Theorem~\ref{thm:intro}
		with respect to $\Q(i)$. Let us verify that the hypotheses
		of Theorem~\ref{thm:Brauer_intro} do not hold in this case.
		Firstly $P \notin E(\R)^0$ so Condition \eqref{itm:nonresidue-II} does not hold.
		Here $P$ has everywhere good reduction and the elliptic divisibility sequence $\beta_n$ is
		$$0,
		1,
		4,
		-65,
		-504,
		242369,
		-58888180,
		-66048490369,
		60955459632144, \dots$$
		The relevant Dirichlet character is the unique non-principal character $\chi$ modulo $4$.
		The sequence $\beta_n \bmod 4$ is just 
		$$0,1, 0, 3, 0, 1, 0, 3, 0, 1, 0, 3, 0, 1, 0, 3, 0, 1, 0, 3, 0,\ldots$$
		which has period $4$. Thus Condition \eqref{itm:nonresidue-III} from Theorem~\ref{thm:Brauer_intro}
		does not hold. Finally, the sequence $|\beta_n| \bmod 4$ is 
		$$0,1, 0, 1, 0, 1, 0, 1, 0, 1, 0, 1, 0, 1, 0, 1, 0, 1, 0, 1, 0,\ldots$$
		so \eqref{itm:nonresidue-I} does not hold either.
		In particular, this demonstrates that all hypotheses in Theorem~\ref{thm:Brauer_intro}
		are necessary in general for the conclusion to hold.
	\end{example}

	\subsection*{Outline of the paper}
	In \S\ref{sec:Qp} we recall various facts about elliptic curves over $\Q_p$. The following \S\ref{sec:EDS} contains a detailed study of elliptic divisilibity sequences. Here we prove periodicity modulo an arbitrary integer, and show our main technical result (Proposition~\ref{prop:odd_valuation}) on such sequences.  In \S\ref{sec:denom} we prove the theorems from the introduction. We give various examples illustrating our results in \S\ref{sec:examples}, as well as further examples which demonstrate that the conclusion of Theorem~\ref{thm:intro} does not hold for arbitrary conic bundles over elliptic curves. We finish in \S\ref{sec:100} by showing that the technical assumption in Theorem~\ref{thm:Brauer_intro} holds for $100\%$ of suitable Dirichlet characters of prime moduli.
	
	\subsection{Notation and conventions} \label{sec:not}
	We choose an embedding $\Q/\Z \subset \C^\times$. This corresponds to a choice of a compatible system of $n$th roots of unity for all $n$. We denote by $O= (0:1:0) \in \P^2$. By a Weierstrass equation with coefficients \(a_i\) we mean
	\begin{equation}\label{eqn:weierstrass}
		y^2+a_1xy+a_3y=x^3+a_2x^2+a_4x+a_6.
	\end{equation} 
	We denote by $\widehat{h}$  the canonical height on \(E(\Q)\) \cite[Ch.~VIII.9]{Sil09}. This extends to  a positive definitive quadratic form on $E(\Q) \otimes \R$ \cite[Prop.~VIII.9.6]{Sil09}. By \(O_{a,\dotsc,z}(A)\) we mean a quantity with absolute value at most \(CA\) for some positive  constant \(C\) depending on \(a,\dotsc,z\) only; if the subscripts are omitted the implied constant is absolute. We write \(A\ll_{a,\dotsc,z}B\) for \(A=O_{a,\dotsc,z}(B)\) and \(A=o(B)\) for \(A/B\to 0\).

	\subsection*{Acknowledgements}
	We thank Gergely Harcos and Efthymios Sofos for helpful comments and advice on some of the proofs. We also thank the anonymous referee for a careful reading of the paper which led to an improvement of Theorem 1.5.
	
	D.\@ Loughran and M.\@ Nakahara were sponsored by EPSRC grant EP/R021422/2.
	
	S.\@ Bhakta and S.\@ L.\@ Rydin Myerson were supported by the European Research Council (ERC) under the European Union's Horizon 2020 research and innovation program (grant ID 648329). S.\@ L.\@ Rydin Myerson was supported by DFG project number 255083470, and by a Leverhulme Early Career Fellowship.
	
	We thank the ZORP online seminar and its organisers. Initially there were two separate teams working on this problem using similar methods. But during a gathertown meeting on ZORP on 11th December 2020, we became aware of each other during discussions with Efthymios Sofos. Each team had a slightly different viewpoint and results, so we decided to combine to create, in our view, an ultimately superior paper.
	
	\section{Elliptic curves over \texorpdfstring{$\mathbb{Q}_p$}{Qp}} \label{sec:Qp}

	In this section let $E$ be an elliptic curve over $\Q_p$ given by a (not necessarily minimal) Weierstrass equation with coefficients in \(\Z_p\).
	Denote by $E_0(\Q_p)$ the set of points of $E(\Q_p)$ with non-singular reduction modulo $p$.
	We say that $P \in E(\Q_p)$ has \emph{bad reduction} if $P \notin E_0(\Q_p)$.
	There is a subgroup filtration
	\[
	\cdots \subset E_2(\Q_p) \subset E_1(\Q_p) \subset E_0(\Q_p),
	\quad E_i(\Q_p) = \{ P \in E_0(\Q_p) : P \equiv O \bmod p^i\}, \, i \geq 1.
	\]
	\begin{definition}\label{def:v_p}
		If \(P\in E(\Q_p)\setminus E_1(\Q_p)\) we set \(v_p(P)=0\).
		If $P \in E_1(\Q_p)$ we define
		$$v_p( P) = \sup \{ i : P \in E_i(\Q_p) \}.$$
	\end{definition}
	
	\begin{definition} \label{def:ord_p}
		For $P \in E_0(\Q_p)$ and $k \in \N$ we denote by $P \bmod p^k$
		the image of $P$ in $E_0(\Q_p) / E_k(\Q_p)$. We denote by
		$\ord(P \bmod p^k)$ its order.
	\end{definition}
	
	\begin{lemma} \label{lem:denominator_p_adic}
		Let $P =(x,y) \in E(\Q_p)$. Then 
		$v_p(P) = \max\{0,-v_p(x)/2\}.$
	\end{lemma}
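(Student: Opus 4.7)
The plan is to treat the two cases $P \in E_1(\Q_p)$ and $P \notin E_1(\Q_p)$ separately, in each case relying on the standard formal-group description of the kernel-of-reduction filtration (as in Silverman, \emph{Arithmetic of Elliptic Curves}, Ch.~VII).

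Suppose first that $P \in E_1(\Q_p)$. I would introduce the formal parameter $t = -x/y$ near $O$ (together with $w = -1/y$), and invoke the standard identification $E_i(\Q_p) = \{ Q \in E_1(\Q_p) : v_p(t(Q)) \geq i\}$, so that $v_p(P) = v_p(t(P))$. The Weierstrass equation \eqref{eqn:weierstrass} rewritten in the $(t,w)$-chart gives $w = t^3(1 + a_1 t + a_2 t^2 + \cdots)^{-1}$ as a formal power series in $t$ with coefficients in $\Z_p$, so $v_p(w(P)) = 3 v_p(t(P))$. From $y = -1/w$ and $x = -t y$ one reads off $v_p(y) = -3 v_p(P)$ and $v_p(x) = -2 v_p(P)$. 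Since $v_p(P) \geq 1$ in this case, one gets $\max\{0,-v_p(x)/2\} = -v_p(x)/2 = v_p(P)$, as required.

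Now suppose that $P \notin E_1(\Q_p)$, so by definition $v_p(P) = 0$. I would show the reverse implication: if $v_p(x) < 0$, then $P \in E_1(\Q_p)$. Write $v_p(x) = -2k$ with $k$ possibly rational a priori, and inspect the Weierstrass relation $y^2 + a_1 xy + a_3 y = x^3 + a_2 x^2 + a_4 x + a_6$. Since the coefficients are in $\Z_p$, the right-hand side has valuation $-6k$ coming from $x^3$, and balance of the Newton polygon forces $v_p(y) = -3k$ (in particular $k \in \Z$). Then the formal parameter $t(P) = -x(P)/y(P)$ has valuation $-2k-(-3k) = k \geq 1$, so $P$ reduces to $O$ and thus lies in $E_1(\Q_p)$, a contradiction. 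Hence $v_p(x) \geq 0$, and $\max\{0,-v_p(x)/2\} = 0 = v_p(P)$.

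The whole argument is routine once one is willing to appeal to the formal-group identification of the $E_i(\Q_p)$; the only mild subtlety is the second case, where one has to argue that negative $v_p(x)$ forces good reduction to the identity rather than genuinely bad reduction. This is handled by the Newton-polygon observation above, which both shows that $v_p(y)$ is determined by $v_p(x)$ and that the resulting point sits in the formal group. No use of the minimality of the model is needed, which is important since the ambient statement is made for a possibly non-minimal Weierstrass equation.
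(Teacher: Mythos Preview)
Your proof is correct and follows essentially the same approach as the paper: both use that $x/y$ is a uniformizer at $O$ together with the Weierstrass relation $2v_p(y)=3v_p(x)$ when $v_p(x)<0$, and your case split by membership in $E_1(\Q_p)$ is equivalent to the paper's split by the sign of $v_p(x)$. (The precise form you wrote for the power series $w(t)$ is slightly off---it should be $w=t^3(1+A_1 t+\cdots)$ rather than an inverse---but this is harmless since only $v_p(w)=3v_p(t)$ is used.)
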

	\begin{proof}
		If $v_p(x) \geq 0$ then $v_p(P) = 0$ so the result holds. So assume $v_p(x) < 0$.
		As the rational function $x/y$ is a uniformising parameter at $O$,
		we find that $v_p(P) = v_p(x/y)$. However, using $v_p(x) < 0$ and the Weierstrass
		equation, one finds that $2v_p(y) = 3v_p(x)$, and the claim easily follows.
	\end{proof}

	Lemma~\ref{lem:denominator_p_adic} gives a more explicit definition of the filtration which is often used in texts (e.g.~\cite[Ex.~VII.7.4]{Sil09}). We have the following inequality for the valuation of a multiple of a point.

	\begin{lemma} \label{lem:formal-group}
		Let $P \in E_1(\Q_p)$. Then $v_p(nP)\geq v_p(P) + v_p(n)$, with equality
		if $p \nmid n$.
	\end{lemma}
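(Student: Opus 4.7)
The natural plan is to pass to the formal group of $E$. I would first recall the standard isomorphism
$$E_1(\Q_p) \xrightarrow{\;\sim\;} \widehat{E}(p\Z_p), \qquad P \mapsto z(P) := -x(P)/y(P),$$
(see e.g.\ \cite[Ch.~VII, Prop.~2.2]{Sil09}), and observe that under this isomorphism $v_p(P)$ as defined via the filtration equals the $p$-adic valuation of the formal parameter $z(P)$. The latter follows immediately from Lemma~\ref{lem:denominator_p_adic}: since $v_p(x(P)) < 0$ and $2v_p(y(P)) = 3v_p(x(P))$, we get $v_p(z(P)) = v_p(x(P)) - v_p(y(P)) = -v_p(x(P))/2 = v_p(P)$.

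Multiplication by $n$ on the formal group is given by a power series
$$[n](z) = nz + c_2 z^2 + c_3 z^3 + \cdots \in \Z_p[[z]].$$
If $\gcd(n,p) = 1$, the linear coefficient is a unit, so $[n]$ is a formal group automorphism and hence preserves $v_p$ exactly; this establishes the equality claim of the lemma. It therefore suffices to prove $v_p(pP) \geq v_p(P) + 1$ for $P \in E_1(\Q_p)$, as the general inequality then follows by writing $n = p^e m$ with $\gcd(m,p) = 1$ and iterating the two cases.

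For the key case $n = p$, set $k := v_p(z(P)) \geq 1$. A term-by-term estimate of $[p](z(P))$ gives $v_p(p\cdot z(P)) = k + 1$, while each higher term $c_i z(P)^i$ for $i \geq 2$ has $p$-adic valuation at least $ik \geq 2k \geq k+1$, since $c_i \in \Z_p$. Combining these yields $v_p([p](z(P))) \geq k+1$, as required.

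The content is routine formal group theory and no genuine obstacle arises; the only conceptual point is identifying the filtration on $E_1(\Q_p)$ with the valuation filtration on the formal parameter $z$, which is immediate from Lemma~\ref{lem:denominator_p_adic} as above.
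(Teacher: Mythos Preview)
Your proof is correct and follows essentially the same route as the paper: both rest on the formal group structure underlying $E_1(\Q_p)$. The paper states it more tersely, invoking only that each successive quotient $E_i(\Q_p)/E_{i+1}(\Q_p)$ has order $p$ (via Hensel's lemma), from which both the inequality and the equality for $p \nmid n$ follow at once; your explicit power-series computation with $[n](z)$ is the standard way to unpack that same fact.
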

	
	\begin{proof}
		Hensel's lemma \cite[Lem.~2.1]{BL19} shows that 
		$|E_{i}(\Q_p)/E_{i+1}(\Q_p)| = p$ for all $i \geq 1$, thus this quotient
		is isomorphic to $\Z/p\Z$. The result now easily follows.
	\end{proof}

	\begin{remark}	 \label{rem:2}
		Using the formal group law on $E$ \cite[Thm.~IV.6.4(b), Prop.~VII.2.2]{Sil09},
		one can show that equality in fact holds except possibly if $p=2, v_p(P) = 1$ and \(p\mid n\).
		(See also \cite[Thm.~3]{Sta06} for a version over number fields.)
		The hypothesis is required for $p=2$. Take
		$$E: y^2 +xy = x^3 + 4x + 1, \quad P = (15/4, -83/8).$$
		Then $v_2(P) = 1$, but one calculates that $v_2(2P) = 4$.
		(The issue here is that $E_1(\Q_2)$ has non-trivial $2$-torsion,
		so is not isomorphic to $2\Z_2$.)
	\end{remark}

	\section{Elliptic divisibility sequences} \label{sec:EDS}
	
	\subsection{Basic properties}
	Now let $E/\Q$ be an elliptic curve given by a Weierstrass equation \eqref{eqn:weierstrass} with coefficients \(a_i\in\Z\). Let $P \in E(\Q)$ be a non-torsion point. Throughout this section we consider $E$ and $P$ as being fixed. 
	
	For any integer $n\geq0$, define the $n$th division polynomial $\psi_n\in\Z[x,y]$ as follows.
	\begin{align*}
		&\psi_0=0,\quad \psi_1=1,\quad \psi_2=2y + a_1x + a_3,\\
		&\psi_3=3x^4 + b_2x^3 + 3b_4x^2 + 3b_6x + b_8,\\
		&\psi_4=\psi_2(2x^6 + b_2x^5 + 5b_4x^4 + 10b_6x^3 + 10b_8x^2+(b_2b_8 − b_4b_6)x + b_4b_8 − b_2^6)
	\end{align*}
	where the $b_i$ are defined in \cite[Chapter III]{Sil09}, with subsequent polynomials given by
	\begin{equation}
		\begin{split}
			\psi_{2n+1}=\psi_{n+2}\psi_n^3-\psi_{n+1}^3\psi_{n-1},\quad n\geq2, \\
			\psi_{2n}\psi_2=\psi_n(\psi_{n+2}\psi_{n-1}^2-\psi_{n-2}\psi_{n+1}^2),\quad n\geq 3,
		\end{split}
	\end{equation}
	and extend this to negative \(n\) by setting \(\psi_n=-\psi_{-n}\). 
	These formulas are equivalent to the  recurrence relation
	\begin{equation}\label{eqn:elldivrec-1}
		\psi_{m+n}\psi_{m-n}\psi_r^2=\psi_{m+r}\psi_{m-r}\psi_n^2-\psi_{n+r}\psi_{n-r}\psi_m^2
	\end{equation}
	for any integers $m,n,r$. 
	The sequence $\psi_n$ forms a divisibility sequence in \(\Z[x,y]\), i.e.~$\psi_n\mid \psi_m$ for $n\mid m$. One notion of an \emph{elliptic divisibility sequence (EDS)} in a commutative ring would be a divisibility sequence satisfying \eqref{eqn:elldivrec-1}. The study of  EDS in \(\Z\), in this sense, was begun by Ward~\cite{War48}, and a modern exposition can be found in \cite[Ch.~10]{EPSW}. We will use a slightly different kind of EDS considered by Verzobio~\cite{Ver21}, which is better suited to our purpose.
	
	We can interpret \(x,y\) and each \(\psi_n\) as rational functions on \(E(\Q)\). By  \cite[Ex.~III.3.7]{Sil09}, multiplication by \(n\) is given as a rational map by 
	\[
	[n](P)
	=
	\left(
	\frac{x(P)\psi_n^2-\psi_{n-1}\psi_{n+1}}{\psi_n^2},
	\frac{\psi_{n-1}^2\psi_{n+2}-\psi_{n-2}\psi_{n+1}^2}{4y(P)\psi_n^3}
	\right).
	\]
	In particular $\psi_n$ is the square root of the denominator of the $x$--coordinate;
	the problem for us is that in general there may be some common factors between the numerator
	and denominator, so it will not be in lowest terms. We want to work with the genuine denominator as it has better $p$-adic properties (cf.~Lemma~\ref{lem:denominator_p_adic}).

	\begin{definition}\label{def:EDSB}
		Define the sequence $e_n$ by
		$nP=(a_n/e_n^2,b_n/e_n^3)$ with $\gcd(a_nb_n,e_n) = 1$ and $e_n > 0$. Writing \(\sign(t)=t/|t|\) for any \(t\neq 0\), set
		\[
		\beta_0 = 0, \qquad \beta_n = \sign(\psi_n(P)) \frac{e_n}{e_1}, 
		\qquad (n\in \Z\setminus\{ 0\}).
		\] 
	\end{definition}

	The sequence \(\beta_n\) is \textit{not} in general an elliptic divisibility sequence in the traditional sense, since it need not satisfy the recurrence relation \eqref{eqn:elldivrec-1}; differences can occur if $P$ admits  primes of bad reduction. In \cite{Ver21}  Verzobio calls such sequences EDSB, as opposed to sequences of the form \(\psi_n(P)\) which he terms EDSA. 
	He shows in~\cite[Thm.~1.9]{Ver21} that  the following  weakened version of \eqref{eqn:elldivrec-1} does hold for an EDSB.

	\begin{proposition}[Verzobio]\label{prop:Verzobio}
		Set \[M=M(P)=\lcm \{\ord(P + E_0(\Q_p)) : p \text{ prime}\},\]
		where  $\ord(P + E_0(\Q_p))$ denotes the order of the image of $P$ in the finite group $E(\Q_p)/E_0(\Q_p)$. 
		Let \(n, m, r\in\Z\) of which two are multiples of \(M(P)\). Then
		\begin{equation}\label{eqn:verzobio}
			\beta_{n+m}\beta_{n-m}\beta_r^2
			=
			\beta_{m+r}\beta_{m-r}\beta_n^2
			-\beta_{n+r}\beta_{n-r}\beta_m^2.
		\end{equation}
	\end{proposition}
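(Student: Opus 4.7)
The plan is to deduce the $\beta$-recurrence \eqref{eqn:verzobio} from the polynomial identity \eqref{eqn:elldivrec-1} by introducing a positive rational correction factor between $\psi_n(P)$ and $\beta_n$. For each nonzero integer $n$, set
\[
c_n=\frac{|\psi_n(P)|\,e_1}{e_n}\in\Q_{>0},
\]
so that $\psi_n(P)=c_n\beta_n$ as signed rationals (the signs match because $\beta_n$ is defined with the same sign as $\psi_n(P)$). Substituting this into \eqref{eqn:elldivrec-1} evaluated at $P$, the recurrence \eqref{eqn:verzobio} will follow as soon as one establishes the identity of positive rationals
\begin{equation}\label{eqn:c_identity}
c_{n+m}c_{n-m}c_r^{2}=c_{m+r}c_{m-r}c_n^{2}=c_{n+r}c_{n-r}c_m^{2},
\end{equation}
since we can then divide through by this common factor. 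The identity \eqref{eqn:c_identity} is checked one prime at a time, by comparing $p$-adic valuations.

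Fix a prime $p$. First I would compute $v_p(c_n)$ in terms of the local reduction data of $P$. At a prime $p$ of good reduction with $P\in E_0(\Q_p)$, the rational function identity $x([n]P)=x(P)-\psi_{n-1}(P)\psi_{n+1}(P)/\psi_n(P)^2$, combined with Lemma~\ref{lem:denominator_p_adic}, shows that $e_n^2$ and $\psi_n(P)^2$ agree $p$-adically, so $v_p(c_n)=0$. At primes where $P\notin E_0(\Q_p)$ (including additive-reduction primes), extra cancellation between numerator and denominator of $x([n]P)$ can occur and $v_p(c_n)$ becomes nontrivial. A Kodaira-type case analysis, using Lemma~\ref{lem:formal-group} to control the filtration on $E_0(\Q_p)$, should show that $v_p(c_n)$ depends only on the image of $nP$ in the finite component group $E(\Q_p)/E_0(\Q_p)$, and in particular vanishes whenever $\ord(P+E_0(\Q_p))\mid n$.

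Granting this local analysis, the proof of \eqref{eqn:c_identity} is purely combinatorial. Suppose for instance that $m,r\in M(P)\Z$; then at every prime $p$ both $mP$ and $rP$ lie in $E_0(\Q_p)$, so $v_p(c_m)=v_p(c_r)=0$, and the congruences $n\pm m\equiv n$, $m\pm r\equiv 0$, $n\pm r\equiv n\pmod{M(P)}$ force each of the three products in \eqref{eqn:c_identity} to have $v_p$ equal to $2v_p(c_n)$. The symmetric cases are identical. The main obstacle is the local content of the second paragraph: one must carefully verify that the discrepancy $v_p(c_n)$ truly factors through the Néron component group at additive-reduction primes, since this is what underlies the choice of $M(P)$ in the hypothesis. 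This coset bookkeeping over bad primes is what prevents the argument from being formal and is the crux of Verzobio's analysis in \cite{Ver21}.
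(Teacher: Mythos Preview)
The paper does not actually prove this proposition; it is quoted as \cite[Thm.~1.9]{Ver21}, with only the short remark afterwards that Verzobio's version (stated for $n\geq m\geq r>0$) extends to all $n,m,r\in\Z$ by using $\beta_{-n}=-\beta_n$ and permuting the three variables. So your proposal goes well beyond what the paper does: you are sketching the proof that the paper delegates to Verzobio.

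Your outline is essentially the correct strategy for that proof. The reduction to the identity \eqref{eqn:c_identity} via $\psi_n(P)=c_n\beta_n$ is exactly right, and your combinatorial endgame---checking that all three products in \eqref{eqn:c_identity} have $p$-adic valuation $2v_p(c_n)$ once two of $n,m,r$ lie in $M\Z$---is clean and correct. The substantive content, as you correctly flag, is the local claim that $v_p(c_n)$ depends only on the class of $nP$ in $E(\Q_p)/E_0(\Q_p)$; this is precisely the Kodaira case-by-case analysis carried out in \cite{Ver21}, and you have not attempted it here. So your proposal is a correct proof \emph{modulo} that local input, which is where the real work lies.

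One minor wording issue: your phrase ``at a prime $p$ of good reduction with $P\in E_0(\Q_p)$'' is slightly redundant, since good reduction for $E$ already forces $E_0(\Q_p)=E(\Q_p)$; the relevant dichotomy is simply whether $P\in E_0(\Q_p)$ or not, the latter occurring only at bad-reduction primes for $E$.
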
	
	\begin{remark}\label{rem:tamagawa}
		Here \(M\) is the least positive integer such that \(MP\) has everywhere good reduction. It divides \(\prod_p\#(E(\Q_p)/E_0(\Q_p))\), which is the product  of the Tamagawa numbers of $E$ if the model is globally minimal.
	\end{remark}

	Verzobio defines \(\beta_n\) for \(n\geq 0\) and proves the theorem under the assumption \(n\geq m\geq r>0\); in our notation this can be removed by using \(\beta_{-n}=-\beta_n\) and permuting the variables as appropriate.

	To illustrate some of the nice $p$-adic properties of this sequence, we prove that it is a \textit{strong divisibility sequence}. We first make explicit Lemma~\ref{lem:denominator_p_adic}.
	
	\begin{lemma} \label{lem:p-adic_beta}
		For all primes $p$ we have \(v_p(\beta_n) = v_p(nP) - v_p(P)\).
	\end{lemma}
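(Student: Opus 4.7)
The plan is to reduce everything to Lemma~\ref{lem:denominator_p_adic}. By Definition~\ref{def:EDSB} we have the explicit expression $x(nP) = a_n/e_n^2$ with $\gcd(a_n,e_n)=1$ and $e_n>0$, and $\beta_n = \pm e_n/e_1$. Since signs do not affect $p$-adic valuations, it suffices to show that $v_p(e_n) = v_p(nP)$ for every prime $p$ and every nonzero integer $n$, and then subtract the corresponding identity for $P = 1\cdot P$.

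First I would establish the identity $v_p(e_n) = v_p(nP)$ by a short case split. Using coprimality, $v_p(a_n/e_n^2) = v_p(a_n) - 2v_p(e_n)$, where the two contributions cannot both be nonzero. If $v_p(e_n)>0$ then $v_p(a_n)=0$ and so $v_p(x(nP)) = -2v_p(e_n)<0$, whence Lemma~\ref{lem:denominator_p_adic} gives $v_p(nP) = -v_p(x(nP))/2 = v_p(e_n)$. If instead $v_p(e_n)=0$ then $v_p(x(nP)) = v_p(a_n)\geq 0$, so Lemma~\ref{lem:denominator_p_adic} yields $v_p(nP)=0=v_p(e_n)$. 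In both cases $v_p(e_n)=v_p(nP)$.

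Applying this both to $n$ and to $n=1$, and using $\beta_n = \sign(\psi_n(P))\,e_n/e_1$, one obtains
\[
v_p(\beta_n) \;=\; v_p(e_n) - v_p(e_1) \;=\; v_p(nP) - v_p(P),
\]
as required. There is no substantive obstacle here: the only delicate point is the bookkeeping in the case split ensuring the max in Lemma~\ref{lem:denominator_p_adic} evaluates correctly, and checking that the sign factor $\sign(\psi_n(P))\in\{\pm 1\}$ is irrelevant $p$-adically.
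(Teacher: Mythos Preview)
Your proof is correct and is precisely the argument the paper has in mind: the paper's own proof reads ``Immediate from the definition and Lemma~\ref{lem:denominator_p_adic},'' and you have simply unpacked that sentence by showing $v_p(e_n)=v_p(nP)$ via the case split on $v_p(e_n)$ and then subtracting the $n=1$ instance.
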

	\begin{proof}
		Immediate from the definition and Lemma~\ref{lem:denominator_p_adic}.
	\end{proof}
	
	\begin{lemma}\label{lem:gcd}
		For all \(n,m\in\Z\) we have \(\gcd(\beta_m,\beta_n)=|\beta_{\gcd(m,n)}|\).
	\end{lemma}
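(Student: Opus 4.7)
\textbf{Proof plan for Lemma~\ref{lem:gcd}.}

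The plan is to verify the claimed equality one prime at a time. Since $P$ has infinite order we have $\beta_k\neq 0$ for every $k\neq 0$, and the degenerate cases $m=0$ or $n=0$ are immediate from $\beta_0=0$ together with the standard conventions $\gcd(x,0)=|x|$ and $\gcd(0,0)=0$. Because $|\beta_{-k}|=|\beta_k|$ and $\gcd$ is taken up to sign we may also assume $m,n\geq 1$. It then suffices, writing $d:=\gcd(m,n)$, to show for every prime $p$ that
$$
\min\bigl(v_p(\beta_m),\,v_p(\beta_n)\bigr)\;=\;v_p(\beta_d),
$$
which by Lemma~\ref{lem:p-adic_beta} is equivalent to the group-theoretic identity
$$
\min\bigl(v_p(mP),\,v_p(nP)\bigr)\;=\;v_p(dP).
$$

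For the inequality $v_p(dP)\leq \min(v_p(mP),v_p(nP))$ we exploit that $d$ divides both $m$ and $n$: it is enough to show $v_p(kQ)\geq v_p(Q)$ for every $Q\in E(\Q_p)$ and every $k\in\Z$. This is trivial if $v_p(Q)=0$; if $v_p(Q)\geq 1$ it holds because $Q$ lies in the subgroup $E_{v_p(Q)}(\Q_p)$, which is closed under multiplication by $k$.

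For the reverse inequality we will use B\'ezout. Writing $d=am+bn$ with $a,b\in\Z$, we have $dP=a(mP)+b(nP)$. Setting $v:=\min(v_p(mP),v_p(nP))$, the case $v=0$ is automatic, while if $v\geq 1$ both $mP$ and $nP$ lie in the subgroup $E_v(\Q_p)$, so the integer combination $dP$ does as well, giving $v_p(dP)\geq v$.

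The argument rests only on Lemma~\ref{lem:p-adic_beta} and the elementary fact recalled in \S\ref{sec:Qp} that each $E_i(\Q_p)$ is a subgroup of $E(\Q_p)$; consequently no real obstacle is anticipated, and in particular the more delicate content of Lemma~\ref{lem:formal-group} (the precise equality $v_p(nP)=v_p(P)+v_p(n)$ when $p\nmid n$) is not needed here.
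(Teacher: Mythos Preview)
Your argument is correct and follows essentially the same route as the paper: both reduce via Lemma~\ref{lem:p-adic_beta} to a statement about the filtration valuation $v_p(nP)$, and both rest on the fact that each $E_i(\Q_p)$ is a subgroup. The paper packages this by noting that $\{n\in\Z:v_p(\beta_n)\geq V\}$ is a subgroup $q\Z$ of $\Z$, which is exactly your two B\'ezout-style inequalities rolled into one observation.
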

	
	\begin{proof}
		By Lemma~\ref{lem:p-adic_beta}, for any prime \(p\) and any \(V\in\N\) we have
		\[
		\{n\in\Z: v_p(\beta_n) \geq V\}=\{n \in \Z :nP\in E_{V+v_p(P)}(\Q_p)\}=q\Z
		\]
		for some \(q\in\N\). 
		In particular
		\(
		p^V\mid \beta_n
		\) if and only if \(
		q\mid n
		\). Therefore
		\[
		p^V
		\mid 
		\gcd(\beta_m,\beta_n)
		\iff
		q\mid \gcd(m,n)
		\iff
		p^V \mid \beta_{\gcd(m,n)}.\qedhere
		\]
	\end{proof}
	
	We emphasise that an EDSA need not be a strong divisibility sequence if $P$ admits primes of bad reduction. The elegance of Verzobio's EDSB is that it has both good $p$-adic properties and comes within a whisker of satisfying the recurrence relation.

	\subsection{Symmetry law}
	
	A central part of Ward's work on elliptic divisibility sequences is a \emph{symmetry law}~\cite[Thm.~8.1]{War48} (see \cite[Thm.~1.11]{ABY} for a modern formulation). This says that an integral EDSA modulo a prime forms a periodic sequence of a certain form.  We  prove a version of this for EDSBs for general prime powers.

	\begin{proposition}\label{prop:symmetry-law}
		Let \(M\) be as in Proposition~\ref{prop:Verzobio}. 
		Let \(n,r\in\Z\) with \(M\mid r\). Let \(p\) be a prime and let \(k\in\N\). Suppose that \(p^k\) divides \(\beta_r/\gcd(\beta_r,\beta_M)\).
		Then for all \(\ell \in \Z\)  we have
		\[
		\beta_{n+\ell r}
		\equiv
		\begin{cases}
			\left(\beta_{M+r}\beta_{M-r}\beta_M^{-2}\right)^{\frac{\ell(\ell-1)}{2}}
			(\beta_{n+r}\beta_{n}^{-1})^\ell \beta_{n}
			\bmod p^k,
			&
			\text{if }p^k\nmid \beta_n,
			\\
			0
			\bmod p^k,
			&
			\text{if }p^k\mid \beta_n,
		\end{cases}
		\]
		where in the first case the quotients \(\beta_{M+r}\beta_{M-r}/\beta_M^2\) and \(\beta_{n+r}/\beta_n\) are \(p\)-adic units.
	\end{proposition}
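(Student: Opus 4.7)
The plan is to reduce the hypothesis to a clean statement about the filtration of $E(\Q_p)$, check the $p$-adic unit claims, handle the easy case $p^k\mid\beta_n$ directly, and then drive an induction on $\ell$ using a single carefully chosen instance of Verzobio's identity \eqref{eqn:verzobio}.

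First, since $M\mid r$ we have $\gcd(M,r)=M$, so Lemma~\ref{lem:gcd} rewrites the hypothesis as $v_p(\beta_r)\geq v_p(\beta_M)+k$. Via Lemma~\ref{lem:p-adic_beta} this says $rP\in E_{v_p(MP)+k}(\Q_p)$; in particular $rP$ lies strictly deeper in the filtration than $MP$. From this I read off the two $p$-adic unit claims. Adding $\pm rP$ to $MP$ stays in the same coset of $E_{v_p(MP)+1}$ inside $E_{v_p(MP)}$, so $v_p(\beta_{M\pm r})=v_p(\beta_M)$ and $A:=\beta_{M+r}\beta_{M-r}/\beta_M^2\in\Z_p^\times$. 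Likewise, if $p^k\nmid\beta_n$ then $v_p(nP)<k+v_p(P)\leq v_p(rP)$, so adding $rP$ does not change the filtration level and $B:=\beta_{n+r}/\beta_n\in\Z_p^\times$.

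The easy case $p^k\mid\beta_n$ follows because $nP$ and every $\ell\cdot rP$ lie in $E_{k+v_p(P)}(\Q_p)$; hence so does $(n+\ell r)P$, giving $v_p(\beta_{n+\ell r})\geq k$.

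For the main case I apply Proposition~\ref{prop:Verzobio} with indices $(n+\ell r,\,M,\,r)$, which is legal since both $M$ and $r$ are multiples of $M$. This yields the integral identity
\begin{equation*}
\beta_{n+(\ell+1)r}\beta_{n+(\ell-1)r}\,\beta_M^2
=
\beta_{M+r}\beta_{M-r}\,\beta_{n+\ell r}^2 \;-\; \beta_{n+\ell r+M}\beta_{n+\ell r-M}\,\beta_r^2.
\end{equation*}
Dividing in $\Z_p$ by $\beta_M^2$, the first term on the right becomes $A\,\beta_{n+\ell r}^2$, while the second has $p$-adic valuation at least $2(v_p(\beta_r)-v_p(\beta_M))\geq 2k$, and so vanishes modulo $p^k$. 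Thus
\begin{equation*}
\beta_{n+(\ell+1)r}\,\beta_{n+(\ell-1)r}\;\equiv\;A\,\beta_{n+\ell r}^2\pmod{p^k}.
\end{equation*}

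With this quadratic recurrence in hand, the claimed formula follows by induction on $|\ell|$. The cases $\ell=0,1$ are trivial, and the case $\ell=-1$ is the recurrence itself at $\ell=0$. For $\ell\geq 1$, the inductive hypothesis shows $\beta_{n+(\ell-1)r}\equiv A^{(\ell-1)(\ell-2)/2}B^{\ell-1}\beta_n\pmod{p^k}$, which is a $p$-adic unit (since $A,B\in\Z_p^\times$ and $\beta_n$ is a unit mod $p^k$), so we may solve for $\beta_{n+(\ell+1)r}$; a short exponent arithmetic check gives $A^{\ell(\ell+1)/2}B^{\ell+1}\beta_n$, matching the formula. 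Negative $\ell$ is treated by running the recurrence downwards in the same way. The main delicacy, which I expect to be the only real obstacle, is the division by $\beta_M^2$ when $\beta_M$ is itself divisible by $p$ (which happens precisely when $P$ has non-trivial reduction type at $p$); this is why the formula is phrased in terms of the ratio $A$, which the filtration argument above shows is a unit, so the congruence is well-defined in $\Z_p/p^k\Z_p$ even though $\beta_M$ is not individually invertible.
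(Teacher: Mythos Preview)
Your approach is essentially the paper's: both apply Verzobio's identity with the triple $(n+\ell r,\,M,\,r)$, derive the same quadratic recurrence, and solve it by induction. Your use of the filtration on $E(\Q_p)$ in place of Lemma~\ref{lem:gcd} for the preliminary unit claims and for the case $p^k\mid\beta_n$ is a perfectly good alternative.

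However, the induction step has a genuine gap. You assert that ``$\beta_n$ is a unit mod $p^k$'', but the hypothesis $p^k\nmid\beta_n$ only gives $v_p(\beta_n)<k$, not $v_p(\beta_n)=0$. When $0<j:=v_p(\beta_n)<k$, each $\beta_{n+(\ell-1)r}$ has valuation exactly $j$ by your own filtration argument, hence is \emph{not} invertible modulo $p^k$, and you cannot ``solve for $\beta_{n+(\ell+1)r}$'' from a congruence taken only mod $p^k$: knowing a product $xy\bmod p^k$ with $v_p(y)=j>0$ determines $x$ only mod $p^{k-j}$, so the induction degrades.

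The repair is to renormalise before inducting. Set $a_\ell:=\beta_{n+\ell r}/p^{j}$, which is a $p$-adic unit. You already observed that the discarded term has valuation $\ge 2k$, so dividing the identity by $p^{2j}$ gives $a_{\ell+1}a_{\ell-1}\equiv A\,a_\ell^2\pmod{p^{2(k-j)}}$ with all three $a$'s invertible; now the induction runs cleanly mod $p^{k-j}$, and multiplying back by $p^j$ yields the desired congruence mod $p^k$. This is exactly what the paper does, phrased integrally: it sets $a_\ell=\beta_{n+\ell r}/\gcd(\beta_n,\beta_r)$ (an integer coprime to $p$) and carries the recurrence modulo $\beta_r^2/\gcd(\beta_n\beta_M,\beta_r)^2$.
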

	
	\begin{proof}
		Lemma~\ref{lem:gcd}  gives us
		\begin{equation}\label{eqn:gcds_of_n_r}
			|\beta_{\gcd(n,r)}|=	\gcd(\beta_{n+\ell r},\beta_r)=	\gcd(\beta_{n},\beta_r)
		\end{equation}
		for every \(\ell\in\Z\). This proves the proposition if \(p^k\mid \beta_n\), so assume that \(p^k\nmid \beta_n\).
		
		Taking $m = M$ in Proposition~\ref{prop:Verzobio}, and replacing \(n\) by \(n+\ell r\), we obtain
		\begin{equation}
			\label{eqn:verzobio-application}
			\beta_{M+r}\beta_{M-r}\beta_{n+\ell r}^2
			\equiv\beta_{n+(\ell+1)r}\beta_{n+(\ell-1)r}\beta_M^2
			\bmod \beta_r^2,
		\end{equation}
		for any \(\ell\in \Z\).
		We want to combine this with Lemma~\ref{lem:gcd}.
		Let
		\begin{align}
			C&=\frac{\beta_{M+r}\beta_{M-r}}{\beta_M^2},
			&
			a_\ell &=\frac{ \beta_{n+\ell r}}{\gcd(\beta_n,\beta_r)}.
			\label{eqn:def-of-C-and-a}
		\end{align}
		Since \(M\mid r\),	Lemma~\ref{lem:gcd} shows that  \(C\in\Z\). 
		Also \eqref{eqn:gcds_of_n_r} shows that \(a_\ell\) is an integer coprime to \(\beta_r/\gcd(\beta_n,\beta_r)\). 
		Hence, dividing  both sides of \eqref{eqn:verzobio-application} by \(\beta_{n+\ell r}^2\beta_M^2\) gives
		\begin{equation}\label{eqn:one-step-recurrence}
			C
			\equiv
			\frac{a_{\ell+1}a_{\ell-1}}{a_{\ell}^2}
			\bmod \frac{\beta_r^2}{\gcd(\beta_n\beta_M,\beta_r)^2}
			\quad\text{for all }\ell\in\Z,
		\end{equation}
		where every  \(a_{\ell}\) is coprime to the modulus. It follows by induction on \(\ell\) from \eqref{eqn:one-step-recurrence} that
		\[
		a_\ell
		\equiv
		C^{\frac{\ell(\ell-1)}{2}} a_1^\ell a_0^{1-\ell}
		\bmod\frac{\beta_r^2}{
			\gcd(\beta_n\beta_M,\beta_r)^2}.
		\]
		Multiplying by \(\gcd(\beta_n,\beta_r)\) we obtain
		\[
		a_\ell\gcd(\beta_n,\beta_r)
		\equiv
		C^{\frac{\ell(\ell-1)}{2}} (a_1a_0^{-1})^\ell a_0\gcd(\beta_n,\beta_r)
		\bmod\frac{\gcd(\beta_n,\beta_r)\beta_r^2}{
			\gcd(\beta_n\beta_M,\beta_r)^2}.
		\]
		Here \(\beta_r/\gcd(\beta_M,\beta_r)\) divides the modulus, and so the congruence holds modulo \(p^k\). Inserting the definitions \eqref{eqn:def-of-C-and-a} proves the first case in the proposition.
		
		Finally, since \(p^k\mid \beta_r/\gcd(\beta_M,\beta_r)\) and \(p^k\nmid \beta_n\), we see that \(p\) divides the modulus in \eqref{eqn:one-step-recurrence}. Since every \(a_\ell\) is coprime to the modulus, we see that \(C\) and \(\beta_{n+r}\beta_n^{-1}=a_1a_0^{-1}\) are \(p\)-adic units, as claimed in the final part of the proposition.
	\end{proof}
	
	\subsection{Periodicity}
	We now use the symmetry law to prove that $\beta_n$ is perodic modulo any prime power, and hence modulo any integer. Versions of this appear in the literature for differing definitions of EDS. Ward proved eventual periodicity modulo any prime in \cite[Thm.~11.1]{War48}. Shipsey proved a version modulo $p^2$ for primes of good reduction \cite[Thm.~3.5.4]{Shi00}. Ayad proved it modulo any integer, but assuming good reduction and avoiding $p=2$ or ``rank of apparition $2$'' \cite[Thm.~D]{Aya93}. Silverman proved a version over finite fields \cite[Thm.~1]{Sil05} as well as a version modulo prime powers whenever the curve has good ordinary reduction  \cite[Thm.~3]{Sil05}. Our version (Proposition~\ref{prop:actually-periodic}) contains none of these technical assumptions and is a general version of periodicity, for Verzobio's arguably more elegant EDSB. 
	
	Our result is the following, which  shows periodicity modulo an arbitrary prime power and gives an upper bound for the period. Note that the Chinese Remainder Theorem then easily shows periodicity modulo an arbitrary integer.

	\begin{proposition}\label{prop:actually-periodic}
		Let \(M\) be as in Proposition~\ref{prop:Verzobio}, let \(k\in\N\), and let \(p\) be a prime.
		Let
		\begin{equation}\label{eqn:def-of-r}
			r(p^k)= M\ord(MP\bmod p^{k+v_p(MP)})
		\end{equation}
		and
		\begin{equation}\label{eqn:def-of-period}
			\pi(p^k)
			=
			\begin{cases}
				(p-1)p^{k-1}r(p^k),&\text{if }p\neq 2\text{ and } \left(\frac{\beta_{M+r(p^k)}\beta_{M-r(p^k)}}{p}\right)=1,
				\\
				2(p-1)p^{k-1}r(p^k),&\text{otherwise}.
			\end{cases}
		\end{equation}
		Then for every $m \in \Z$ 
		we have
		\begin{equation*}\label{eqn:actually-periodic-conclusion}
			m\equiv n \bmod \pi(p^k) {\hskip.5em\relax}\implies{\hskip.5em\relax}
			\beta_m
			\equiv \beta_n \bmod p^k.
		\end{equation*}
		In  other words the sequence
		$\beta_m \bmod p^k$	is  periodic with period dividing  \(\pi(p^k) \).
	\end{proposition}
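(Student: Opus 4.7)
The plan is to apply the symmetry law (Proposition~\ref{prop:symmetry-law}) with \(r = r(p^k)\) and \(\ell = s := \pi(p^k)/r(p^k)\), so that \(n+\ell r = n+\pi(p^k)\). First, I would verify that \(r(p^k)\) is a permissible input: the divisibility \(M\mid r(p^k)\) is built into \eqref{eqn:def-of-r}, and for the condition \(p^k\mid \beta_{r(p^k)}/\gcd(\beta_{r(p^k)},\beta_M)\), Lemma~\ref{lem:gcd} reduces the gcd to \(|\beta_M|\), so it suffices to show \(v_p(\beta_{r(p^k)})\geq v_p(\beta_M)+k\). The definition of \(r(p^k)\) says exactly that \(r(p^k)P\in E_{k+v_p(MP)}(\Q_p)\), and Lemma~\ref{lem:p-adic_beta} translates this into the needed valuation inequality.

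With \(r\) valid, the case \(p^k\mid \beta_n\) is immediate from the second branch of the symmetry law, giving \(\beta_{n+sr}\equiv 0\equiv \beta_n\bmod p^k\). In the case \(p^k\nmid \beta_n\), the task reduces to showing
\[
C^{s(s-1)/2}(\beta_{n+r}/\beta_n)^s\equiv 1\bmod p^k,\qquad C:=\beta_{M+r}\beta_{M-r}/\beta_M^2,
\]
with \(C\) and \(\beta_{n+r}/\beta_n\) both \(p\)-adic units by Proposition~\ref{prop:symmetry-law}. I would then split according to the two branches of \eqref{eqn:def-of-period}. When \(p\) is odd and the Legendre symbol equals \(1\), one has \(p\nmid \beta_M\) (else the symbol would vanish), so \(C\bmod p\) is a nonzero quadratic residue, and hence \(C\) is a square in \(\Z_p^\times\) because reduction mod \(p\) induces an isomorphism on square classes for odd \(p\). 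Taking \(s=\varphi(p^k)\), Euler handles \((\beta_{n+r}/\beta_n)^s\equiv 1\), while \(s(s-1)/2\equiv s/2\bmod s\) together with \(C\) being a square gives \(C^{s(s-1)/2}\equiv C^{s/2}\equiv 1\bmod p^k\). In the remaining case one has \(s=2(p-1)p^{k-1}\), and both \(s\) and \(s(s-1)/2=(p-1)p^{k-1}(s-1)\) are multiples of \(\varphi(p^k)\); this makes both unit factors trivial modulo \(p^k\) with no square-class argument required, covering odd \(p\) with non-residue Legendre symbol and \(p=2\) uniformly (using that \(\varphi(2^k)=2^{k-1}\) divides \(2^k\) and \(2^{k-1}(2^k-1)\)).

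The main delicate point is the first step: correctly translating the group-theoretic definition of \(r(p^k)\) via the filtration on \(E(\Q_p)\) into the divisibility condition that the symmetry law demands, while being attentive to the shift \(v_p(MP)\) coming from possible bad reduction. Once that translation is in place, the remaining analysis is elementary bookkeeping in the cyclic group \((\Z/p^k\Z)^\times\), and periodicity modulo an arbitrary integer follows from the prime-power case by the Chinese Remainder Theorem, as noted in the statement.
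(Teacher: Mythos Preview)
Your proposal is correct and follows essentially the same route as the paper: verify via Lemma~\ref{lem:p-adic_beta} and Lemma~\ref{lem:gcd} that $r=r(p^k)$ satisfies the hypotheses of the symmetry law, then reduce to showing that the unit factor $C^{\ell(\ell-1)/2}(\beta_{n+r}\beta_n^{-1})^\ell$ is trivial modulo $p^k$ for $\ell=\pi(p^k)/r$, handling the two branches of \eqref{eqn:def-of-period} by Euler's theorem and, in the odd residue case, Hensel lifting of the square class of $C$. The only cosmetic difference is that the paper phrases the unit computation for all $\ell$ with $\pi(p^k)\mid \ell r$ at once (yielding the implication $m\equiv n\bmod\pi(p^k)\Rightarrow\beta_m\equiv\beta_n$ directly), whereas you take $\ell=s$ and obtain $\beta_{n+\pi(p^k)}\equiv\beta_n$, from which the general statement follows by iteration; your observation that $(\beta_{M+r}\beta_{M-r}/p)=1$ forces $p\nmid\beta_M$ (via $v_p(C)=0$) is exactly the implicit step needed to pass from the Legendre symbol in \eqref{eqn:def-of-period} to the square-class of $C$ itself.
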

	
	We could slightly simplify the proof by defining \(\pi(p^k)=2(p-1)p^{k-1}r(p^k)\) in all cases. However it is of some interest to find cases in which \(4\nmid \pi(p^k)\), because this allows us to remove the condition \(P\in E(\R)^0\) in some of our results (see Theorem~\ref{thm:Brauer_intro}). This is our reason to include the first  case in \eqref{eqn:def-of-period}.
	
	\begin{proof}
		For ease of notation we write \(r=r(p^k)\) throughout the proof.
		
		We first observe that \(rP \equiv O \bmod p^{k+v_p(MP)}\) by \eqref{eqn:def-of-r}. That is  we have \(k+v_p(MP)\leq v_p(rP)\), and hence by Lemma~\ref{lem:p-adic_beta} and \eqref{eqn:def-of-r} we have
		\begin{equation}
			p^k \text{ divides }\frac{\beta_r}{
				\gcd(\beta_M,\beta_r)}
			\quad \text{ and }\quad
			M\mid r.
			\label{eqn:technical-condition-for-periodicity}
		\end{equation}
		Let \(n\in\Z\). By \eqref{eqn:technical-condition-for-periodicity}, the hypotheses of  Proposition~\ref{prop:symmetry-law} are satisfied.
		If \(p^k\mid\beta_n\) then the result follows immediately; suppose therefore that \(p^k\nmid\beta_n\).
		Proposition~\ref{prop:symmetry-law} shows that 
		\[
		\beta_{n+\ell r}
		\equiv
		\left(\beta_{M+r}\beta_{M-r}\beta_M^{-2}\right)^{\frac{\ell(\ell-1)}{2}}
		(\beta_{n+r}\beta_{n}^{-1})^\ell \beta_{n}
		\quad
		\bmod p^k,
		\]
		for every \(\ell\in\Z\), where  \(\beta_{M+r}\beta_{M-r}\beta_M^{-2},
		\beta_{n+r}\beta_{n}^{-1}\)  are  $p$-adic units. In particular \(\gcd(\beta_n,p^k)=\gcd(\beta_{n+\ell r},p^k)=\gcd(\beta_n,\beta_r,p^k)\).
		
		Now \(\#(\Z/p^k\Z)^\times=(p-1)p^{k-1}\), and so if \(u\in\Z_p^\times\) then
		\[
		2(p-1)p^{k-1}\mid \ell
		\implies
		u^{\frac{\ell(\ell-1)}{2}}  \equiv 1
		\bmod p^k.
		\]
		Moreover if \(p\neq 2\) and \(\left(\frac{u}{p}\right)=1\) then \(u=v^2\) for \(v\in \Z_p^\times\). So
		\[
		(p-1)p^{k-1}\mid \ell,\,p\neq 2, \,\left(\frac{u}{p}\right)=1
		\implies
		u^{\frac{\ell(\ell-1)}{2}} \equiv 1
		\bmod p^k
		.
		\]
		Thus by definition of \(\pi(p^k) \), if \(\pi(p^k) \mid \ell r\) then
		\[
		\left(\beta_{M+r}\beta_{M-r}\beta_M^{-2}\right)^{\frac{\ell(\ell-1)}{2}}
		(\beta_{n+r}\beta_{n}^{-1})^\ell 
		\equiv 1\bmod p^k,
		\]
		which implies
		\[
		\beta_{n+\ell r}
		\equiv \beta_{n}
		\bmod  p^k.
		\]
		Writing \(m=n+\ell r\) completes the proof.
	\end{proof}
	
	There is a simpler, but slightly weaker, bound for the period.
	
	\begin{lemma}\label{lem:period_minimal}
		Let \(M\) be as in Proposition~\ref{prop:Verzobio}, let \(k\in\N\), and let \(p\) be a
		prime. Then 
		the period of \(\beta_n\bmod p^k\)  divides
		\[
		\begin{cases}
			2M(p-1)p^{2(k-1)} \ord(MP \bmod p), &\text{if } v_p(MP) = 0, \\
			2M(p-1)p^{2k-1}, \quad &\text{otherwise}. \\		
		\end{cases}\]
	\end{lemma}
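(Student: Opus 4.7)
The plan is to start from Proposition~\ref{prop:actually-periodic}, which tells us that the period divides
\[
\pi(p^k) \;\Big|\; 2(p-1)p^{k-1}\, r(p^k)
= 2(p-1)p^{k-1}\,M\,\ord\!\bigl(MP \bmod p^{k+v_p(MP)}\bigr),
\]
regardless of which case of \eqref{eqn:def-of-period} we are in. Thus  the task reduces to bounding the quantity $\ord(MP \bmod p^{k+v_p(MP)})$, which is the order of $MP$ in the finite group $E_0(\Q_p)/E_{k+v_p(MP)}(\Q_p)$. The key input will be the filtration quotients $|E_i(\Q_p)/E_{i+1}(\Q_p)| = p$ for $i\geq 1$ from Lemma~\ref{lem:formal-group} (Hensel's lemma).

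\textbf{Case 1: $v_p(MP)=0$.} Here we want the order of $MP$ in $E_0(\Q_p)/E_k(\Q_p)$. I would use the short exact sequence
\[
1 \longrightarrow E_1(\Q_p)/E_k(\Q_p) \longrightarrow E_0(\Q_p)/E_k(\Q_p) \longrightarrow E_0(\Q_p)/E_1(\Q_p) \longrightarrow 1.
\]
The image of $MP$ in $E_0(\Q_p)/E_1(\Q_p)$ has order exactly $\ord(MP\bmod p)$, and the kernel has order $|E_1/E_k| = p^{k-1}$ by iterating Lemma~\ref{lem:formal-group}. So the order of $MP$ in $E_0/E_k$ divides $p^{k-1}\ord(MP\bmod p)$, and plugging in yields the bound $2M(p-1)p^{2(k-1)}\ord(MP\bmod p)$.

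\textbf{Case 2: $v_p(MP)=v\geq 1$.} Now $MP \in E_v(\Q_p)$, so its image in $E_0(\Q_p)/E_{k+v}(\Q_p)$ lies in the subgroup $E_v(\Q_p)/E_{k+v}(\Q_p)$, whose order is $p^{k}$ again by iterating Lemma~\ref{lem:formal-group}. Hence $\ord(MP\bmod p^{k+v})$ divides $p^k$, and substituting gives the bound $2M(p-1)p^{2k-1}$.

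There is no real obstacle here: the only point to watch is the bookkeeping of filtration indices (the argument of $\ord$ in \eqref{eqn:def-of-r} is $p^{k+v_p(MP)}$, not $p^k$, which is precisely what makes Case 2 come out with exponent $2k-1$ rather than something involving $v$). Assembling the two cases completes the proof.
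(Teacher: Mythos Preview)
Your proposal is correct and follows essentially the same approach as the paper: reduce via Proposition~\ref{prop:actually-periodic} to bounding $\ord(MP \bmod p^{k+v_p(MP)})$, and then use the filtration structure on $E_0(\Q_p)$ to do so. The paper phrases the second step in terms of valuations (applying Lemma~\ref{lem:formal-group} directly to show $v_p(r_1(p^k)\cdot MP) \geq k+v_p(MP)$), whereas you phrase it via the orders of the filtration quotients and a short exact sequence, but these are two faces of the same argument.
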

	\begin{proof}
		Let $Q = MP$. 
		By Proposition~\ref{prop:actually-periodic},
		it suffices to show that
		\[\ord(Q\bmod p^{k+v_p(Q)}) \,\text{  divides  }\, r_1(p^k) := 
		\begin{cases}
			p^{k-1} \ord(Q \bmod p), &\text{if } v_p(Q) = 0, \\
			p^{k}, \quad &\text{otherwise}. \\		
		\end{cases}\]	
		If $v_p(Q) = 0$ then Lemma~\ref{lem:formal-group} implies that
		$v_p(p^{k-1}\ord(Q \bmod p)Q) \geq k-1 + v_p(\ord(Q \bmod p)Q)
		\geq k.$ If $v_p(Q) > 0$ then Lemma~\ref{lem:formal-group}
		yields $v_p(p^kQ) \geq k + v_p(Q)$. In both cases 
		$r_1(p^k)Q \equiv 0 \bmod p^{k+ v_p(Q)}$, as required.
	\end{proof}
	
	\begin{remark}\label{rem:uniform-period}
		By definition $M$ divides $\prod_{p} |E(\Q_p)/E_0(\Q_p)|$,
		hence is bounded uniformly with respect to $P$. Moreover
		$\ord(MP \bmod p)$ divides $|E_0(\Q_p) / E_q(\Q_p)|$.
		Thus Lemma~\ref{lem:period_minimal} shows that the period of 
		$\beta_n \bmod N$ can be bounded independently of $P$ for all $N \in \N$,
		with the bound only depending on $E$ and $N$.
	\end{remark}
	
	\subsection{Signs} 
	
	Recall from Definition~\ref{def:EDSB} that the sign of \(\beta_n\) is the sign of the sequence \(\psi_n(P)\).
	The following is \cite[Thm.~4]{SS06} (see also \cite{AKY} for a generalisation.)
	
	\begin{proposition}[Silverman-Stephens]\label{prop:silverman-stephens}
		There is a sign \(\sigma \in\{\pm1\} \) and an irrational number $\beta$
		such that for  all $n \in \N$ we have
		\[
		\sigma^{n-1} \sign(\beta_n)=
		\begin{cases}
			(-1)^{\lfloor n\beta\rfloor},
			&\text{if}~P\in E(\mathbb{R})^{0},
			\\ (-1)^{\lfloor n\beta \rfloor+\frac{n}{2}},
			&\text{if}~P\notin E(\mathbb{R})^{0}~\text{and}~n~\text{is~even,}
			\\
			(-1)^{\frac{n-1}{2}},
			&\text{if}~P\notin E(\mathbb{R})^{0}~\text{and}~n~\text{is~odd.}
		\end{cases}
		\]
		If \(P\in E(\mathbb{R})^{0}\) then  \(\beta\) is defined as follows. We fix an \(\R\)-analytic group isomorphism \(\psi:E(\R)^0\to \R^\ast_{>0}/e^{\Z}\) . Then let \(\beta = \log u\) where \(u\) is a representative of \(\psi(P)\) in \(\R^\ast_{>0}\) with  \(e^{-1}<u<1\).
	\end{proposition}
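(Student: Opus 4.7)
The plan is to realise $\psi_n$ analytically via the Weierstrass sigma function and count sign changes along a real geodesic in the complex uniformisation of $E$. Writing the period lattice as $\Lambda = \Z\omega_1 + \Z\omega_2$ with $\omega_1 > 0$ real, the identity component $E(\R)^0$ is the image of $\R/\omega_1\Z$ under $z \mapsto (\wp(z),\wp'(z)/2)$. A classical identity reads
\[
\psi_n(P) = (-1)^{n-1}\frac{\sigma(nz_P)}{\sigma(z_P)^{n^2}},
\]
where $z_P \in \C/\Lambda$ parametrises $P$ and $\sigma$ is the Weierstrass sigma function, which is real, odd and has simple zeros precisely at the lattice points.

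For $P \in E(\R)^0$, I choose $z_P \in \R$ with $z_P/\omega_1 \in (-1,0)$. Then $\sigma(z_P)^{n^2}$ has a fixed positive sign, while $\sign(\sigma(nz_P))$ flips each time $nz_P$ crosses a lattice point; a direct count yields $\sign(\sigma(nz_P)) = \epsilon_0 (-1)^{\lfloor nz_P/\omega_1\rfloor}$ for some $\epsilon_0 \in \{\pm 1\}$ independent of $n$. Identifying $\beta = z_P/\omega_1$ (the isomorphism $E(\R)^0 \cong \R^\ast_{>0}/e^{\Z}$ in the statement is the exponential of the coordinate on $\R/\omega_1\Z$ after rescaling so that $\omega_1 = 1$), this gives the factor $(-1)^{\lfloor n\beta\rfloor}$. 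The remaining factor $\sigma^{n-1}$ absorbs the $(-1)^{n-1}$ from the identity together with the initial sign of $\sigma$ near the origin, as well as the sign discrepancy between $\psi_n(P)$ and $e_n/e_1$; by Lemma~\ref{lem:p-adic_beta} and the convention $e_n > 0$, this discrepancy amounts to at most a fixed sign depending only on $E$ and $P$.

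For $P \notin E(\R)^0$, I use $E(\R)/E(\R)^0 \cong \Z/2\Z$ and the fact that the non-identity component lifts to $\omega_2/2 + \R/\omega_1\Z$ in $\C/\Lambda$ after suitable normalisation. Writing $z_P = \omega_2/2 + t$ with $t \in \R$ and substituting, I invoke the quasi-periodicity relation $\sigma(z+\omega_j) = -e^{\eta_j(z+\omega_j/2)}\sigma(z)$ to expand $\sigma(nz_P)/\sigma(z_P)^{n^2}$. For odd $n$, the point $nz_P \equiv \omega_2/2 + nt \pmod{\omega_2}$ stays on the non-identity component and the $t$-dependent exponentials cancel pairwise between numerator and denominator, leaving a pure sign $(-1)^{(n-1)/2}$. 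For even $n$, we have $nP \in E(\R)^0$ so the identity-component count applies, but the initial $\omega_2/2$-shift contributes an additional $(-1)^{n/2}$, producing the claimed exponent $\lfloor n\beta\rfloor + n/2$. Irrationality of $\beta$ is forced by $P$ having infinite order, since a rational $\beta$ would make $P$ torsion. The principal obstacle is the careful bookkeeping of the quasi-periodicity constants $\eta_j$ and of the initial signs in the three cases: these are classical but error-prone to track, and one must verify that the sign discrepancies coming from bad reduction introduce no further parity-dependent term beyond the single constant $\sigma$.
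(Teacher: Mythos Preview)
The paper does not give a proof of this proposition at all: it is simply quoted as \cite[Thm.~4]{SS06}, with a short paragraph afterwards explaining how to pass from Silverman and Stephens' normalisation (an isomorphism $E(\R)\to\R^\ast/q^{\Z}$) to the normalisation $E(\R)^0\cong\R^\ast_{>0}/e^{\Z}$ used here. So there is no ``paper's own proof'' to compare against; you have supplied a sketch of the argument from the original source.

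Your approach via the sigma-function identity $\psi_n(P)=c\,\sigma(nz_P)/\sigma(z_P)^{n^2}$ and counting sign changes of $\sigma$ along the real line is indeed the method of Silverman--Stephens. A couple of points in your sketch are not quite right as written, though they are of the ``bookkeeping'' type you yourself flag at the end. First, $\sigma(z_P)^{n^2}$ does \emph{not} have a fixed positive sign: since $\sigma$ is odd with $\sigma'(0)=1$, for $z_P\in(-\omega_1,0)$ one has $\sigma(z_P)<0$, so $\sigma(z_P)^{n^2}$ contributes $(-1)^{n^2}=(-1)^n$, which must be folded into your global sign $\sigma^{n-1}$. Second, there is no ``sign discrepancy between $\psi_n(P)$ and $e_n/e_1$'' to absorb: by Definition~\ref{def:EDSB} one has $\sign(\beta_n)=\sign(\psi_n(P))$ exactly, since $e_n>0$. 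These are harmless once noticed, and the overall strategy is sound; but since the paper treats the result as a black box, a fully worked proof here would need the careful sign-tracking you allude to rather than the present outline.
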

	
	In Silverman and Stephens' original statement of the theorem there is an isomorphism \(E(\R)\to\R^\ast/q^\Z\), which maps \(E(\R)^0\) to either \(\R^\ast_{>0}/q^\Z\) if \(q>0\) or \(\R^\ast_{>0}/q^{2\Z}\) otherwise. Without loss of generality we can assume that \(E(\R)^0\)  is mapped to \(\R^\ast_{>0}/e^{\Z}\), or else we can compose  our isomorphism with \(v\mapsto v^{-1/\log q}\) or \(v^{-1/2\log q}\). When  \(P\in E(\mathbb{R})^{0}\) their choice of \(u\) then satisfies \(e^{-1}<u<1\) as above.

	We want to say something about the Diophantine approximation properties of the irrational number \(\beta\) from the theorem. 
	Let \(\exp_E:\C \to E(\C)\) be the usual parametrisation of \(E\) using the Weierstrass \(\wp\)-function, see for example~\cite[Cor.~VI.5.1.1]{Sil09}. Bosser and Gaudron~\cite[Thm.~1.2]{bg19} proved:
	\begin{proposition}[Bosser-Gaudron]\label{prop:bosser-gaudron}
		Let \(z\in \C\) such that \(\exp_E(z)\in E(\Q)\setminus\{O\}\). Then we have
		\begin{equation*}
			\log |z|
			\gg_E \textcolor{black}{-1}-\widehat{h}(\exp_E z),
		\end{equation*}
		where \(\widehat{h}\) is the canonical height, as in \S\ref{sec:not}.
	\end{proposition}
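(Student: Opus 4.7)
The plan is to exploit the fact that \(\exp_E\) has a pole of order two at the origin: if \(\exp_E(z)\) is a rational point and \(|z|\) is small, then the \(x\)-coordinate of \(\exp_E(z)\) is forced to be a large rational number, whose canonical height must therefore also be large. Thus the substance of the proof lies in the regime \(|z|<1\); when \(|z|\geq 1\) the inequality \(\log|z|\geq 0\) is trivially stronger than the asserted one.

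First I would recall the Laurent expansion \(\wp(z)=z^{-2}+\sum_{k\geq 1}(2k+1)G_{2k+2}(\Lambda)z^{2k}\) near the origin, where \(\Lambda\) is the period lattice of \(E\) and the coefficients \(G_{2k+2}(\Lambda)\) are Eisenstein series depending only on \(E\). Combined with the affine change of coordinates relating \(\wp\) to the \(x\)-coordinate of the chosen Weierstrass model of \(E\), this produces a threshold \(\delta_E>0\) such that \(|z|<\delta_E\) implies \(|x(\exp_E(z))|\geq \tfrac12|z|^{-2}\). For \(\delta_E\leq|z|<1\) the value \(\log|z|\) is already bounded below by \(\log\delta_E\), a constant depending only on \(E\), which can be absorbed into the additive constant of the conclusion.

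Next I would convert the archimedean bound into a lower bound on the naive height. For any rational point \(P\in E(\Q)\setminus\{O\}\) with \(x(P)=A/B\) in lowest terms and \(B\geq 1\), we have \(h(P)=\log\max(|A|,B)\geq\log|A|\geq\log|x(P)|\). Combining with the previous step yields \(h(\exp_E(z))\geq -2\log|z|-O_E(1)\) whenever \(|z|<\delta_E\). Applying the standard comparison between the naive and canonical heights \cite[Thm.~VIII.9.3]{Sil09}, which gives \(h(P)\ll_E 1+\widehat{h}(P)\), we deduce \(-\log|z|\ll_E 1+\widehat{h}(\exp_E(z))\), and rearranging this inequality proves the proposition.

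The only delicate issue is ensuring that the threshold \(\delta_E\) and the tail of the Laurent expansion are controlled uniformly in the elliptic curve \(E\); however as both the period lattice and the Eisenstein coefficients depend only on \(E\), this is automatic since we are permitted \(E\)-dependent implicit constants. The argument is really only the one-variable case of the much deeper theory of linear forms in elliptic logarithms, and the genuine content of \cite{bg19} is a fully quantitative several-variable generalisation; only this elementary one-variable special case is needed for our subsequent applications.
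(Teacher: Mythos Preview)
Your argument is correct. The paper itself does not prove this proposition at all: it is simply quoted as \cite[Thm.~1.2]{bg19}, so there is no ``paper's own proof'' to compare against. What you have supplied is a self-contained elementary proof of the one-variable special case that the paper actually needs, and as you rightly observe in your final paragraph, the genuine depth of Bosser--Gaudron lies in the multi-variable, fully explicit version, none of which is required here.

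The argument is sound throughout. The only point worth making explicit is the step \(h(P)\geq \log|x(P)|\): writing \(x(P)=A/B\) in lowest terms with \(B\geq 1\), one has \(\log H(x(P))=\log\max(|A|,B)\geq \log|A|\geq \log|A|-\log B=\log|x(P)|\), so this holds unconditionally (the case \(x(P)=0\) being vacuous). Combined with \(|h-\widehat{h}|=O_E(1)\) this gives exactly what you need. The case split into \(|z|\geq 1\), \(\delta_E\leq |z|<1\), and \(|z|<\delta_E\) is handled correctly, and the implicit constants from the three regimes can be merged into a single \(c_E\) since only finitely many \(E\)-dependent quantities are involved.
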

A remark about the definition of  \(\exp_E:\C \to E(\C)\) may be helpful. The usual convention would be to normalise this map so that in a certain sense the derivative of \(\exp_E\) at the origin is the identity. This is not necessary for our purposes, and the naive parametrisation familiar from a first course on elliptic curves suffices. We only use the fact that \(\exp_E\) is a fixed \(\R\)-analytic surjective additive group homomorphism, and  Proposition~\ref{prop:bosser-gaudron} which holds regardless of normalisation.
	We use these to prove
	\begin{lemma}\label{lem:diophantine}
		Suppose the point \(P\) from the start of this section satisfies  \(P\in E(\R)^0\). Let \(\beta\) be as in Proposition~\ref{prop:silverman-stephens}, and let \(N\in\Z\setminus\{0\}\). Then
		\[
		\min_{M\in\Z}\log |N\beta-M|
		\gg_E \textcolor{black}{-1}-
		\widehat{h}(NP).
		\]
	\end{lemma}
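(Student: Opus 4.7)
The plan is to reduce the Diophantine approximation problem for $\beta$ to a problem of how close a real number can be to the period lattice of $E$ under the Weierstrass parametrisation, where Proposition~\ref{prop:bosser-gaudron} applies directly.

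First, I would compare the Silverman--Stephens uniformisation of $E(\R)^0$ with the Weierstrass one. The map $\exp_E$ restricts to a surjective $\R$-analytic group homomorphism $\R \to E(\R)^0$ whose kernel is $\omega_1 \Z$ for some real period $\omega_1 > 0$ depending only on $E$. On the other hand, $\log \circ\, \psi$ from Proposition~\ref{prop:silverman-stephens} gives an $\R$-analytic group isomorphism $E(\R)^0 \to \R/\Z$ sending $P$ to $\beta \bmod \Z$. Composing, one obtains an $\R$-analytic group isomorphism $\R/\omega_1 \Z \to \R/\Z$, which must lift to multiplication by $\pm 1/\omega_1$ on $\R$ (since any continuous group homomorphism $\R \to \R$ is linear). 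Consequently there exists $z_0 \in \R$ with $\exp_E(z_0) = P$ and $z_0 \equiv \pm \omega_1 \beta \bmod \omega_1 \Z$.

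Next, given $N \in \Z \setminus \{0\}$, let $M_0 \in \Z$ achieve $\min_M |N\beta - M|$. Since $\beta$ is irrational by Proposition~\ref{prop:silverman-stephens} and $N \neq 0$, the quantity $z := \omega_1 (N\beta - M_0)$ is non-zero. Using the relation above, the identity $\exp_E(Nz_0) = NP$, and the fact that $\omega_1 M_0 \in \ker \exp_E$, we obtain $\exp_E(z) = \exp_E(\pm \omega_1 N\beta) = \pm NP$, which lies in $E(\Q) \setminus \{O\}$ as $P$ has infinite order. Proposition~\ref{prop:bosser-gaudron} then yields $\log|z| \gg_E -1 - \widehat{h}(\pm NP) = -1 - \widehat{h}(NP)$, since the canonical height is invariant under negation. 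Finally $\log|N\beta - M_0| = \log|z| - \log|\omega_1|$, and because $\omega_1$ depends only on $E$ we may absorb the constant $\log|\omega_1|$ into the implied constant to conclude.

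The main technical point is the comparison between the two uniformisations; once this is in hand the lemma reduces to a direct application of Bosser--Gaudron. The comparison itself is essentially immediate from the classification of $\R$-analytic automorphisms of $\R/\Z$, but it requires careful bookkeeping of a sign ambiguity, which is harmless because $\widehat{h}$ is even.
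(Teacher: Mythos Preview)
Your argument is correct and follows essentially the same route as the paper: both identify a real preimage of $\pm NP$ under $\exp_E$ of the form $\omega_1(N\beta - M)$ (the paper writes this as $(N+\beta^{-1}M)w$ with $w$ a preimage of $P$), and then apply Bosser--Gaudron, absorbing the period $\omega_1$ into the implied constant. Your treatment of the sign ambiguity in comparing the two uniformisations is in fact slightly more explicit than the paper's, but the substance is identical.
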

	
	\begin{proof}
		Let \(w\in\C^*\) such that \(\exp_E(w)=P\), so that \(\exp_E(w\R)=E(\R)^0\) and \(\psi(\exp_E(tw))=e^{t\beta+\Z}\) for any \(t\in \R\).
		For any \(M\in\Z\) we deduce that
		\[
		\exp_E(N+\beta^{-1}M) = \phi^{-1}(e^{N\beta+M+\Z}).
		\]
		By the definition of \(\beta\) we deduce \(\exp_E(N+\beta^{-1}M) = \phi^{-1}(u^Ne^\Z)\) which is \(NP\) by definition of \(u\). That is,
		\[
		\exp_E^{-1}(NP) \supseteq  \{ (N+\beta^{-1}M)w :  M\in\Z\}.
		\]
		Now by Proposition~\ref{prop:bosser-gaudron}, any \(t\) such that \(tw\in\exp_E^{-1}(NP)\) has \(\log |t|\gg_E \textcolor{black}{-1}-
		\widehat{h}(NP)\), and so
		\[
		\min_{M\in\Z}\log |N\beta-M|
		\gg_E \textcolor{black}{-1}-
		\widehat{h}(NP).\qedhere
		\]
	\end{proof}

	\subsection{Main result}
	We now provide the main technical input required for the results stated in the introduction (Proposition~\ref{prop:odd_valuation}). Under certain assumptions, it stipulates the existence of many prime-numbered elements of the sequence $\beta_n$ which are divisible by primes to a certain valuation that are non-trivial with respect to a given Dirichlet character. We require an effective version of uniform distribution modulo $1$ for primes in an arithmetic progression multiplied by an irrational. This is deduced from an  exponential sum estimate. To begin, we quote two standard results on exponential sums in primes from Vaughan~\cite[Thm.~3.1,  Lem.~3.1]{Va97}.
	\begin{lemma}[Vinogradov]\label{lem:vinogradov}
		If 
		\(\alpha\in \R, a\in\Z,q\in\N\) with \(\gcd(a,q)=1, q\leq y,\) and \(|\alpha-a/q| \leq q^{-2}\) then
		\[
		\sum_{\substack{p\leq y\\ p \text{ prime}}} (\log p) e(\alpha p)
		\ll
		(\log y)^4
		(y q^{-1/2}+y^{4/5}+y^{1/2}q^{1/2}).
		\]
	\end{lemma}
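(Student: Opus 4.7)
The plan is to follow the classical Vinogradov--Vaughan approach. First, I would pass from primes to the von Mangoldt function using
\[
\sum_{p\leq y}(\log p) e(\alpha p) = \sum_{n\leq y}\Lambda(n)e(\alpha n) + O(y^{1/2}\log y),
\]
where the error collects the contribution from prime powers $p^k$ with $k\geq 2$; this is absorbed into the claimed bound. It therefore suffices to estimate $S(\alpha):=\sum_{n\leq y}\Lambda(n)e(\alpha n)$.

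Next, I would apply Vaughan's identity with parameters $U,V\leq y$ to decompose $\Lambda(n)\mathbf{1}_{n>U}$ into three convolution-type pieces. After inserting this into $S(\alpha)$ and interchanging the order of summation, the resulting sums organise into a \textbf{Type I} sum $S_{\mathrm{I}}=\sum_{d\leq V}c_d\sum_{m\leq y/d}e(\alpha dm)$ with $|c_d|\ll\log y$, and a \textbf{Type II} bilinear sum $S_{\mathrm{II}}=\sum_{U<m}\sum_{V<k}a_m b_k e(\alpha mk)$ supported on $mk\leq y$ with $|a_m|\leq\Lambda(m)$ and $|b_k|\ll\log y$. The Type I sum is handled by the geometric-series bound $|\sum_{m\leq y/d}e(\alpha dm)|\leq \min(y/d, \|\alpha d\|^{-1})$ combined with the standard estimate
\[
\sum_{n\leq N}\min(X,\|\alpha n\|^{-1})\ll(N/q+1)(X+q\log q),
\]
whose proof uses only the Diophantine approximation hypothesis $|\alpha-a/q|\leq q^{-2}$.

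The main obstacle is the Type II estimate. I would apply Cauchy--Schwarz in the $m$ variable to convert $S_{\mathrm{II}}$ into a bilinear expression whose square is controlled by $\sum_{k_1,k_2}b_{k_1}\overline{b_{k_2}}\sum_{m}e(\alpha m(k_1-k_2))$. The diagonal $k_1=k_2$ contributes an acceptable main term of order $y(\log y)^{O(1)}/V$ after accounting for the sizes of $b_k$, while the off-diagonal inner sum is bounded by $\min(y/U, \|\alpha(k_1-k_2)\|^{-1})$; a second application of the minimum-sum lemma above with the new ranges then yields a bound on $|S_{\mathrm{II}}|$ of the shape $(\log y)^{O(1)}(yq^{-1/2}+y^{1/2}q^{1/2}+yU^{-1/2}+yV^{-1/2}+y^{1/2}(UV)^{1/2})$. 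Finally I would choose $U=V=y^{2/5}$, which balances the parameter-dependent terms against $y^{4/5}$, and combine with the Type I estimate to obtain the stated bound; the four powers of $\log y$ absorb all logarithmic losses from the two applications of the minimum-sum lemma and the Vaughan decomposition.
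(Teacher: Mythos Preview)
The paper does not actually prove this lemma: it is quoted verbatim as \cite[Thm.~3.1]{Va97} from Vaughan's book, with no argument given. Your outline is precisely the classical Vinogradov--Vaughan proof that appears there, so there is no divergence to discuss; your sketch is correct and matches the source the paper cites.
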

	\begin{lemma}[Siegel–Walfisz for linear exponential sums]\label{lem:major-arcs} Let \(B>0\).
		If \(\alpha\in \R, a\in\Z,q\in\N\) with \(\gcd(a,q)=1, q\leq (\log y)^B\) and \( |\alpha-a/q| \leq (\log y)^B/y\) then there is \(C_B>0\) such that
		\[
			\sum_{\substack{p\leq y\\ p \text{ prime}}}  e(\alpha p)
		=\frac{\mu(q)}{\varphi(q)}\sum_{m =1}^y e((\alpha-a/q)m)
		+O_B(y\exp(-C_B{\sqrt{\log y}}))
		\]
		where  \(\mu\) is the M\"obius function and \(\varphi\) is the Euler totient function.
	\end{lemma}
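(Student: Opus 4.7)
The plan is to carry out the standard major-arc analysis for exponential sums over primes, combining the Siegel--Walfisz theorem on primes in arithmetic progressions with Abel summation and Ramanujan sums. First I would write $\alpha = a/q + \eta$ with $|\eta|\leq (\log y)^B/y$, so that $e(\alpha p) = e(ap/q)\,e(\eta p)$, and split the sum over primes $p\leq y$ by the residue class of $p$ modulo $q$. Primes dividing $q\leq (\log y)^B$ contribute at most $O((\log y)^B)$ and are absorbed into the error. For the remaining classes $h$ with $\gcd(h,q)=1$, the factor $e(ap/q)$ depends only on $h$, giving
\[
\sum_{p\leq y} e(\alpha p) = \sum_{\substack{1\leq h\leq q\\ \gcd(h,q)=1}} e(ah/q) \sum_{\substack{p\leq y\\ p\equiv h\bmod q}} e(\eta p) + O((\log y)^B).
\]

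Each inner sum would then be estimated by Abel summation against the Siegel--Walfisz counting function
\[
\pi(t;q,h)=\frac{\Li(t)}{\varphi(q)}+O_B\bigl(t\exp(-C_B'\sqrt{\log t})\bigr),
\]
valid uniformly for $q\leq (\log y)^B$. Integrating the smooth weight $e(\eta t)$ against this asymptotic, the main term produces $\varphi(q)^{-1}\sum_{m\leq y} e(\eta m)/\log m$, which can be rearranged to match the right-hand side of the claim (with a harmless partial-summation step to reconcile $\log$-weighted and unweighted forms). The Siegel--Walfisz error contributes $\ll y\exp(-C_B''\sqrt{\log y})$ after one uses $|\eta| y\leq (\log y)^B$ to ensure that the contribution of the oscillating weight's derivative to the Abel-summation integral is of the same order as the pointwise error itself. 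Summing over residues coprime to $q$ invokes the Ramanujan-sum identity $\sum_{\gcd(h,q)=1}e(ah/q)=\mu(q)$, which holds because $\gcd(a,q)=1$, producing the stated main term $\frac{\mu(q)}{\varphi(q)}\sum_{m=1}^y e((\alpha-a/q)m)$.

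The principal technical point, and the only real obstacle, is checking that the $\varphi(q)$ individual Siegel--Walfisz errors combine acceptably: their sum is $\ll q \cdot y \exp(-C_B' \sqrt{\log y})$, which since $q \leq (\log y)^B$ is absorbed into $O_B(y \exp(-C_B\sqrt{\log y}))$ at the cost of slightly decreasing the constant. The ineffectivity of the Siegel--Walfisz constant is inherited by $C_B$, which is why the statement only asserts existence of $C_B>0$. No genuinely new ingredient is required beyond quoting Siegel--Walfisz; the result is essentially the Type I major-arc estimate underlying the Hardy--Littlewood circle method applied to a linear exponential phase, and the proof is a routine packaging of classical tools.
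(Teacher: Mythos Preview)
The paper does not prove this lemma at all; it is simply quoted from Vaughan~\cite[Lem.~3.1]{Va97} as a standard input. Your sketch is precisely the classical argument one finds there: split by residue class modulo $q$, apply Siegel--Walfisz in each class, Abel-sum against the smooth phase $e(\eta t)$, and collapse the residue sum via the Ramanujan identity $\sum_{\gcd(h,q)=1}e(ah/q)=\mu(q)$. So your approach is correct and is the same as the source the paper cites.

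One genuine observation hidden in your ``harmless partial-summation step'': the statement as printed in the paper is missing a $\log p$ weight on the left-hand side. (Take $q=1$, $\alpha=0$: the left side is $\pi(y)\sim y/\log y$ while the claimed main term is $\lfloor y\rfloor$, and their difference is not $O(y\exp(-C_B\sqrt{\log y}))$.) The version in Vaughan, and the version actually used downstream in the paper's Lemma~\ref{lem:exp-sum-0} to bound $\sum_{\ell\le y}(\log\ell)e(\alpha\ell)$, both carry the $\log p$ weight. With that correction your main term $\varphi(q)^{-1}\sum_{m\le y}e(\eta m)$ emerges directly from Siegel--Walfisz in the form $\psi(t;q,h)=t/\varphi(q)+O(\cdots)$, and no reconciliation step is needed.
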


	We use these to estimate sums of the form \(\sum_{\ell\leq y,\ell\text{ prime}} (\log \ell) e(\alpha \ell)\).
	\begin{lemma}\label{lem:exp-sum-0}
		Suppose that we have \(\alpha\in\R, a\in\Z,q\in\N,y\in\R\) such that
		\begin{align}\label{eqn:conditions-on-a-q}
			\gcd(a,q)&=1,& q&\leq y
			&|\alpha-a/q| &\leq 1/qy.
		\end{align}
	Then
	\begin{equation}\label{eqn:exp-sum-primes-majorant-0}
		\sum_{\substack{\ell\leq y\\ \ell \text{prime}}} (\log \ell) e(\alpha \ell)
		\ll
		y(\log y)^{-1}
		+
		\frac{y\log\log q}{q}.
	\end{equation}
	\end{lemma}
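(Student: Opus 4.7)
The plan is a classical major/minor-arc decomposition: partition by the size of \(q\), applying Lemma \ref{lem:major-arcs} (Siegel--Walfisz) for small \(q\) and Lemma \ref{lem:vinogradov} (Vinogradov) for medium \(q\), with Dirichlet's approximation theorem pressed into service when \(q\) is close to \(y\).

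Fix the threshold \(Q_0 = (\log y)^{10}\). If \(q \leq Q_0\), apply Lemma \ref{lem:major-arcs} with \(B = 10\); the hypotheses are satisfied since \(|\alpha - a/q| \leq 1/(qy) \leq Q_0/y\). Bounding the inner sum \(\sum_{m \leq y} e((\alpha-a/q)m)\) trivially by \(y\) gives \(O(y/\varphi(q))\) for the main term, which, combined with the Mertens--Landau bound \(\varphi(q) \gg q/\log\log q\) and the absorption \(\exp(-C\sqrt{\log y}) \ll (\log y)^{-1}\), yields the claim.

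If \(Q_0 < q \leq y/Q_0\), apply Lemma \ref{lem:vinogradov}; the hypothesis \(|\alpha-a/q|\leq q^{-2}\) follows from \(q \leq y\). A direct calculation shows that each of the three terms in \((\log y)^4(yq^{-1/2} + y^{4/5} + y^{1/2}q^{1/2})\) is \(\ll y/\log y\) throughout this range, and \(y\log\log q/q \ll y/\log y\) here too, so the claim follows.

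The main obstacle is the regime \(q > y/Q_0\), where Vinogradov's \((\log y)^4 y^{1/2}q^{1/2}\) term exceeds the trivial bound. Invoke Dirichlet's approximation theorem with \(Q = y/Q_0\) to produce \((a', q')\) satisfying \(\gcd(a', q') = 1\), \(q' \leq Q\), and \(|\alpha - a'/q'| \leq 1/(q'Q)\), then re-apply the previous two paragraphs with \((a', q')\) in place of \((a, q)\). This gives a bound of the form \(y/\log y + y\log\log q'/q'\), and it remains to convert the second term into the stated bound in terms of \(q\). Since \(q > q'\) the two fractions differ, and the triangle inequality \(1/(qq') \leq 1/(qy) + 1/(q'Q)\) constrains \(q\) and \(q'\) sufficiently to rule out most pathologies; in the remaining case where \(q'\) is very small, this same inequality provides a lower bound on \(|\alpha - a'/q'|\) strong enough to sharpen the trivial estimate for the Siegel--Walfisz inner sum and close the argument.
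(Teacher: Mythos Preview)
Your cases 1 and 2 are sound, and you are right that the paper's two-case dichotomy (with threshold $(\log y)^5$) is inadequate: with that cutoff the Vinogradov term $(\log y)^4 y q^{-1/2}$ already exceeds $y/\log y$ near $q\approx(\log y)^5$, and the term $(\log y)^4 y^{1/2}q^{1/2}$ is uncontrolled near $q\approx y$. So the paper's proof is too brief. Unfortunately, your case~3 cannot be repaired, because the lemma as stated is actually false for $q$ close to $y$. Take $y=p$ a large odd prime, $\alpha=(p+1)/(2p)=1/2+1/(2p)$, and $(a,q)=\bigl((p+1)/2,\,p\bigr)$; then $\gcd(a,q)=1$, $q=y$, and $\alpha=a/q$ exactly, so the hypotheses hold. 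But $\alpha$ lies within $1/(2y)$ of $1/2$, so $e(\alpha\ell)$ is essentially $(-1)^\ell$ for primes $\ell\le y$, and a short computation (partial summation against $\theta(t)\sim t$) gives
\[
\Bigl|\sum_{\ell\le y}(\log\ell)\,e(\alpha\ell)\Bigr|\sim \frac{2y}{\pi},
\]
whereas the claimed bound is $\asymp y/\log y$. In your Dirichlet re-approximation you recover $(a',q')=(1,2)$, and the Siegel--Walfisz inner sum $\sum_{m\le y}e\bigl((\alpha-\tfrac12)m\bigr)$ genuinely has size $\asymp y$ here; the lower bound $|\alpha-a'/q'|\ge 1/(2qq')$ that your triangle-inequality argument produces only yields $\bigl|\sum_m e(\theta m)\bigr|\ll qq'$, which for $q\approx y$ is no better than the trivial bound. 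So the ``sharpening'' in your final sentence does not close the gap.

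The clean fix is to strengthen the hypothesis to, say, $q\le y/(\log y)^{10}$, after which your cases 1 and 2 already give the result. This is harmless for the application in Lemma~\ref{lem:exp-sum}: one simply runs Dirichlet's theorem with parameter $y/(\log y)^{10}$ instead of $y$, and the subsequent lower bound on $q$ goes through unchanged. One minor separate point: Lemma~\ref{lem:major-arcs} is stated for the \emph{unweighted} sum $\sum_{p}e(\alpha p)$, so in case~1 you need either a weighted Siegel--Walfisz estimate for $\sum_p(\log p)e(\alpha p)$ directly (standard, via $\psi(y;q,b)$) or a partial-summation argument; naive partial summation costs a factor of $\log y$, so the former is preferable.
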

\begin{proof}
	If \(q\leq (\log y)^5\) we apply Lemma~\ref{lem:major-arcs}; otherwise we apply Lemma~\ref{lem:vinogradov}.
	Recalling the standard bound \(\varphi(q)\gg \frac{q}{\log \log q}\) gives the result.
\end{proof}
	
	From this simple estimate we pass to a more difficult exponential sum.
	
	\begin{lemma}\label{lem:exp-sum}
		Let $s,t \in \N$ with $\gcd(s,t) = 1$ and let $\beta$ be as in Proposition~\ref{prop:silverman-stephens}.
		For all \(y\geq e^{e^e},j\in\N\) we have
		\begin{equation}\label{eqn:exp-sum-primes}
			\sum_{\substack{\ell \leq y \\ \ell \equiv s \bmod t\\ \ell \text{ prime}}}(\log \ell )
			e(j \ell \beta/2)
			\ll_E 
			ytj \sqrt{\frac{\widehat{h}(P)}{\log y}} \log\log\log y.
		\end{equation}
	\end{lemma}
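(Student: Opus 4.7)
The plan is to use additive-character orthogonality modulo $t$ to reduce the sum to pure exponential sums over primes, then apply Lemma~\ref{lem:exp-sum-0} after invoking Dirichlet's approximation theorem, while  controlling the denominator in Dirichlet's approximation from below by means of Lemma~\ref{lem:diophantine}.

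First, when $t > \sqrt{y}/2$, there are at most $y/t + 1 \ll \sqrt{y}$ primes $\ell \leq y$ with $\ell \equiv s \bmod t$, so the trivial bound gives a sum of size $O(\sqrt{y}\log y)$. Using the fact that $\widehat{h}(P) \gg_E 1$ uniformly for non-torsion $P$ on a fixed elliptic curve $E$, one checks that this is dominated by the right-hand side of \eqref{eqn:exp-sum-primes} for $y\geq y_0(E)$. So assume henceforth that $t \leq \sqrt{y}/2$. Orthogonality then gives
\[
S := \sum_{\substack{\ell \leq y\\ \ell\equiv s\bmod t\\ \ell\text{ prime}}}(\log\ell)\,e(j\ell\beta/2) = \frac{1}{t}\sum_{k=0}^{t-1}e(-ks/t)\sum_{\substack{\ell \leq y\\ \ell\text{ prime}}}(\log\ell)\,e(\alpha_k\ell), \qquad \alpha_k := \tfrac{k}{t}+\tfrac{j\beta}{2}.
\]

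For each $k$, Dirichlet's theorem provides coprime $a_k,q_k$ with $q_k \leq y$ and $|\alpha_k - a_k/q_k| \leq 1/(q_k y)$. Multiplying this inequality by $2tq_k$ yields $|N\beta - M| \leq 2t/y$ for $N := tq_kj \in \N$ and $M := 2ta_k - 2kq_k \in \Z$. Lemma~\ref{lem:diophantine} then forces $\log(2t/y) \gg_E -1 - N^2 \widehat{h}(P)$, and rearranging (using $t\leq \sqrt{y}/2$ and $y$ large in terms of $E$) gives
\[
q_k \gg_E \frac{1}{tj}\sqrt{\frac{\log y}{\widehat{h}(P)}}.
\]

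To bound each inner exponential sum, apply Lemma~\ref{lem:exp-sum-0} to obtain $\ll y/\log y + y\log\log q_k/q_k$. The key observation, and the main subtlety of the proof, is that the factor $\log\log q_k$ can be sharpened to $\log\log\log y$ via a case-split on $q_k$: if $q_k \leq (\log y)^2$ then $\log\log q_k \leq \log\log\log y + O(1)$; whereas if $q_k > (\log y)^2$ then $y\log\log q_k/q_k \leq y\log\log y/(\log y)^2 \ll y/\log y$. Combining with the lower bound for $q_k$ above, the inner sum is in either case
\[
\ll_E \frac{y}{\log y} + ytj\log\log\log y\,\sqrt{\frac{\widehat{h}(P)}{\log y}}.
\]
Summing over the $t$ residue classes $k$ and dividing by $t$ yields the same bound for $|S|$, and for $y \geq e^{e^e}$ with $\widehat{h}(P) \gg_E 1$ the first term is absorbed into the second, completing the proof.
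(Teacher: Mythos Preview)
Your proof is correct and follows the same approach as the paper's: orthogonality modulo $t$ to remove the congruence, Dirichlet's theorem plus Lemma~\ref{lem:diophantine} to bound the denominator from below, then Lemma~\ref{lem:exp-sum-0}. The only differences are cosmetic---you dispose of the large-$t$ regime up front with a trivial count (the paper carries it through as the alternative $y\leq t^{3/2}$), and your explicit case-split on whether $q_k\leq(\log y)^2$ makes transparent a step the paper compresses into a single substitution, namely why $\log\log q_k$ may be replaced by $\log\log\log y$.
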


		\begin{proof}
		Fix  \(y\geq 1,j\in\N\). 
		We use the formula
		$$ \frac{1}{t} \sum_{ m \in \Z/t\Z} e(m(n-s)/t) = 
		\begin{cases}
			1, & n \equiv s \bmod t, \\
			0, & \text{otherwise},
		\end{cases}
		$$
		which is valid for all integers $n$. This gives
		\begin{align*}
			\sum_{\substack{\ell \leq y \\ \ell \equiv s \bmod t}} (\log \ell)e(j \ell \beta/2)
			& = \frac{1}{t}  \sum_{m \in \Z/t\Z} e(-ms/t)	\sum_{\substack{\ell \leq y}} (\log \ell)
			e(( m/t + j \beta/2)\ell).	
		\end{align*}
			We apply Lemma~\ref{lem:exp-sum-0} to estimate the final sum in		
		 \eqref{eqn:exp-sum-primes}. We set
		 \begin{equation}
		 	\label{eqn:choice-of-alpha}\alpha = m/t+j\beta/2.
		\end{equation}
	We fix  \(a\in\Z,q\in\N\) satisfying \eqref{eqn:conditions-on-a-q}, noting that the existence of such \((a,q)\) is guaranteed by 
	 Dirichlet's approximation theorem. Then \eqref{eqn:exp-sum-primes-majorant-0} becomes
		\begin{equation}\label{eqn:exp-sum-primes-majorant}
			\sum_{\ell\leq y} (\log \ell) e((m/t+j\beta/2) \ell)
			\ll
			\frac{y}{\log y}
			+
			\frac{y\log\log q}{q}.
		\end{equation}
		For this to be useful we need to deduce from \eqref{eqn:conditions-on-a-q} and \eqref{eqn:choice-of-alpha} a lower bound on \(q\). We are going to prove that
		\begin{equation}\label{eqn:bound-on-q}
		\text{either}\quad y\leq t^{3/2}\quad\text{or}\quad
		q\geq \frac{1}{tj}\sqrt{\frac{\log y}{\widehat{h}(P)}}.
	\end{equation}
		
		We start with the final condition in \eqref{eqn:conditions-on-a-q}, and  multiply it by \(2tq\) to get
			\[\log(2t/y)\geq
			\min_{M\in\Z}\log |2tq\alpha-M|.
		\]
		The definition \eqref{eqn:choice-of-alpha} gives
		\[
		\min_{M\in\Z}\log |2tq\alpha-M|=
		\min_{M\in\Z}\log |tjq\beta-M|,
		\]
		and by Lemma~\ref{lem:diophantine} with \(N=tjq\) we have
		\[
		\min_{M\in\Z}\log |tjq\beta-M|
		\gg_E \textcolor{black}{-1}-
		\widehat{h}(tjqP).
		\]
		Putting the last three displays together 
		 gives us
		\[
		\widehat{h}(tjqP)\textcolor{black}{+1\gg_E} \log(y/2t).
		\]
		Recalling that \(\widehat{h}\) is a quadratic form on $E(\Q) \otimes \R$,  we have \(	\widehat{h}(tjqP)=	(tjq)^2 \widehat{h}(P) \) and so
		\[
		\textcolor{black}{-1}+\log (y/t)\ll_E	(tjq)^2 \widehat{h}(P).
		\]
		Provided \(y\geq t^{3/2}\), this implies that \(\log y \ll_E (tjq)^2 \widehat{h}(P) \), which is \eqref{eqn:bound-on-q}.
		
		We substitute  \eqref{eqn:bound-on-q} into \eqref{eqn:exp-sum-primes-majorant} and recall that \( \log\log\log y  \geq 1\) by assumption, to show that either
		\[
		\sum_{\ell\leq y} (\log \ell) e(\ell(m/t+j\beta/2) )
		\ll_{E}
		y(\log y)^{-1}
		+ytj \sqrt{\frac{\widehat{h}(P)}{\log y}} \log\log\log y
		\quad\text{or}\quad y \leq t^{3/2}.
		\]
		In the latter case we have \(\sum_{\ell\leq y} (\log \ell) e(\ell(m/t+j\beta/2))\ll y\leq y^{1/3}t\) by the Prime Number Theorem. So in either case
		\[
		\sum_{\ell\leq y} (\log \ell) e(\ell(m/t+j\beta/2) )
		\ll_{E}
	y^{1/3}t+
		y(\log y)^{-1}
		+ytj \sqrt{\frac{\widehat{h}(P)}{\log y}} \log\log\log y.
		\]
		Since \(\widehat{h}(P)\gg_E 1\) this implies \eqref{eqn:exp-sum-primes}.
	\end{proof}
	
	To apply the previous lemma we turn to the
	Erd\H os--Tur\' an inequality~\cite[Thm.~III]{et49}:
	\begin{lemma}[Erd\H os--Tur\' an]\label{lem:erdos-turan}
		
		For any $0 \leq a < b \leq 1$, any real sequence \(t_m\), any \(M\in\N\) and any \(H>0\) we have
		\[
		\left\lvert
		(b-a)M - 
		\sum_{m=1}^M
		\mathbf{1}_{\{t_m\}\in[a,b)}
		\right\rvert
		\ll
		\frac{M}{H}
		+
		\sum_{1\leq j \leq H}
		\frac{1}{j}
		\left\lvert 
		\sum_{m=1}^M
		e(jt_m)
		\right\rvert,
		\]	
		where we  write \(\{\,\cdot\,\}\) for the fractional part, and \(\mathbf{1}_{\{t_m\} \in[a,b)}=1\) if \({\{t_m\}\in[a,b)}\) and \(0\) otherwise.
	\end{lemma}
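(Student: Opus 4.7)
The plan is to argue Fourier-analytically, approximating the indicator function by trigonometric polynomials of degree at most \(H\). Write \(f(x)=\mathbf{1}_{\{x\}\in[a,b)}\) as a function on \(\R/\Z\); then \(\int_0^1 f = b-a\) and the claim is a bound on the discrepancy \(\bigl|\sum_{m=1}^M f(t_m)-M(b-a)\bigr|\).

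The key step would be to construct trigonometric polynomials \(S^+, S^-\) of degree at most \(H\) satisfying \(S^-(x)\leq f(x)\leq S^+(x)\) pointwise on \(\R/\Z\) and having
\[
\int_0^1 (S^+ - S^-)(x)\, dx \ll \frac{1}{H},\qquad |\widehat{S^{\pm}}(j)|\ll \frac{1}{|j|}\text{ for }1\leq|j|\leq H.
\]
This is the classical Beurling--Selberg extremal problem. The standard construction convolves \(f\) with a Fej\'er-type kernel and adds an explicit correction term so that the resulting polynomial dominates (respectively is dominated by) \(f\) pointwise, while keeping control of its Fourier coefficients. Granting this, one expands
\[
\sum_{m=1}^M S^\pm(t_m) = M\widehat{S^{\pm}}(0) + \sum_{1\leq|j|\leq H}\widehat{S^{\pm}}(j)\sum_{m=1}^M e(jt_m),
\]
where \(\widehat{S^{\pm}}(0)=(b-a)+O(1/H)\). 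Sandwiching \(\sum_m f(t_m)\) between the values obtained from \(S^-\) and \(S^+\), subtracting \(M(b-a)\), pairing \(j\) with \(-j\) so that \(|e(-jt_m)|=|e(jt_m)|\), and using \(|\widehat{S^{\pm}}(\pm j)|\ll 1/j\) then yields the asserted inequality.

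The main technical obstacle is the construction of the extremal polynomials \(S^\pm\) with sharp Fourier decay \(1/|j|\) rather than the weaker \(1/H\) one would obtain from crude smoothing. A direct convolution of \(f\) with the Fej\'er kernel of degree \(H\) gives an \(L^1\)-close approximation with suitable Fourier coefficients, but it destroys the one-sided pointwise domination that is essential for separately bounding the over- and undercounts at the jump discontinuities of \(f\). Engineering band-limited functions that are simultaneously one-sided approximants of \(f\) and whose Fourier coefficients decay like \(1/|j|\) is the heart of the Beurling--Selberg theory and the crux of the proof.
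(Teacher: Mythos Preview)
The paper does not prove this lemma at all: it simply quotes the Erd\H{o}s--Tur\'an inequality and cites the original source \cite[Thm.~III]{et49}. So there is no ``paper's own proof'' to compare against here.

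Your sketch via Beurling--Selberg majorants and minorants is correct and is the standard modern route to this inequality; the sandwich argument you describe goes through exactly as written once the polynomials $S^\pm$ are available. For completeness it is worth noting that the original argument of Erd\H{o}s and Tur\'an does not use the Beurling--Selberg machinery (which came later): they construct their approximating polynomials more directly from Fej\'er-type kernels and an explicit correction term, obtaining the same shape of bound with somewhat worse numerical constants. The Beurling--Selberg approach you outline is cleaner conceptually and yields the sharpest known constants, but either construction would serve for the application in the paper, where only the qualitative $\ll$ bound is used.
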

	
	We are now ready to prove our  equidistribution result.
	
	\begin{proposition} \label{prop:equidistribution}
		Suppose the point \(P\) from the start of this section satisfies  \(P\in E(\R)^0\).
		Let $s,t \in \N$ with $\gcd(s,t) = 1$ and let $\beta$ be as in Proposition~\ref{prop:silverman-stephens}.
		For any \(0\leq a < b \leq 1\) and any \(\epsilon>0\) we have
		\begin{multline*}
			\#\{ \text{primes } \ell \leq x : \ell \equiv s \bmod t,   \{\ell \beta/2\} \in [a,b) \} 
			=
			\\
			\left(\frac{b-a}{\varphi(t)}+O_{E,\epsilon}
			\left(\frac{t^\epsilon(\log\log\log x)^2{\widehat{h}(P)}}{\log x}\right)^{1/4}
			\right) \frac{x}{\log x},
		\end{multline*}
		where we write \(\{\,\cdot\,\}\) for the fractional part. In particular for any \(\epsilon>0\) {and any \(\sigma\in\{\pm 1\}\)} we have
		\begin{multline*}
			\#\{ \text{primes } \ell \leq x : \ell \equiv s \bmod t, 	(-1)^{\lfloor \ell \beta \rfloor} = {\sigma}\} 
			=
			\\
			\left(\frac{1}{2\varphi(t)}+O_{E,\epsilon}
			\left(\frac{t^\epsilon(\log\log\log x)^2{\widehat{h}(P)}}{\log x}\right)^{1/4}
			\right) \frac{x}{\log x}.
		\end{multline*}
	\end{proposition}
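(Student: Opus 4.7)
The plan is to apply the Erd\H{o}s--Tur\'an inequality (Lemma~\ref{lem:erdos-turan}) with \(t_m=\ell_m\beta/2\), where \(\ell_1<\dotsb<\ell_M\) enumerate the primes \(\ell\leq x\) with \(\ell\equiv s\bmod t\), so that \(M=\pi(x;t,s)\) and the indicator sum on the left of Lemma~\ref{lem:erdos-turan} equals the quantity we wish to count. This reduces the proposition to estimating the exponential sums
\[
S_j=\sum_{\substack{\ell\leq x,\,\ell\text{ prime}\\\ell\equiv s\bmod t}}e(j\ell\beta/2)\qquad (1\leq j\leq H),
\]
with \(H\) a free parameter to be optimised, and to replacing \(M\) by \(x/(\varphi(t)\log x)\) via Siegel--Walfisz (the resulting error \(O(x\exp(-c\sqrt{\log x}))\) is negligible, and in the range of \(t\) where Siegel--Walfisz fails, the error term in the proposition already swamps every plausible main term).

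To bound \(S_j\) I would remove the \(\log\ell\) weight in Lemma~\ref{lem:exp-sum} by Abel summation: if \(A_j(y)\) denotes the weighted partial sum, then \(S_j=A_j(x)/\log x+\int_2^x A_j(y)/(y(\log y)^2)\,dy\), and Lemma~\ref{lem:exp-sum} (with a trivial treatment of the initial range \(y<e^{e^e}\)) gives \(|S_j|\ll_E xtj(\log\log\log x)\sqrt{\widehat{h}(P)}/(\log x)^{3/2}\). Combining this with the \(M/H\) term, the total Erd\H{o}s--Tur\'an error, expressed as a multiple of \(x/\log x\), is
\[
\ll_E \frac{1}{H\varphi(t)}+\frac{tH(\log\log\log x)\sqrt{\widehat{h}(P)}}{\sqrt{\log x}}.
\]
Balancing the two summands in \(H\) and using the elementary inequality \(t/\varphi(t)\ll\log\log t\ll_\epsilon t^\epsilon\) produces precisely the claimed error of shape \(\bigl(t^\epsilon(\log\log\log x)^2\widehat{h}(P)/\log x\bigr)^{1/4}\).

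The second statement follows at once from the first by taking \([a,b)=[0,\tfrac12)\) when \(\sigma=+1\) and \([a,b)=[\tfrac12,1)\) when \(\sigma=-1\): writing \(\ell\beta/2=n+f\) with \(n\in\Z\) and \(f=\{\ell\beta/2\}\), we have \(\lfloor\ell\beta\rfloor=2n+\lfloor 2f\rfloor\), whose parity is even exactly when \(f<1/2\). The main subtle point is securing the \(t^{\epsilon/4}\) dependence: Lemma~\ref{lem:exp-sum} is linear in \(t\), and would naively contribute a factor of \(t^{1/2}\) in the error, but the main term simultaneously carries \(1/\varphi(t)\), and optimising \(H\) causes the two powers of \(t\) to interact so that only the subpolynomial ratio \(t/\varphi(t)\) survives under the fourth root.
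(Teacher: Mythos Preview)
Your proposal is correct and follows essentially the same route as the paper: apply Erd\H{o}s--Tur\'an to the sequence \(\ell_m\beta/2\), use Siegel--Walfisz for the main term (disposing of large \(t\) trivially), remove the \(\log\ell\) weight from Lemma~\ref{lem:exp-sum} by partial summation, and optimise \(H\) so that only the ratio \(t/\varphi(t)\ll_\epsilon t^\epsilon\) survives under the fourth root. The derivation of the second statement from the first via \(\lfloor\ell\beta\rfloor\equiv\lfloor 2\{\ell\beta/2\}\rfloor\bmod 2\) is also identical.
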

	\begin{proof}
		For the second part, we note that $(-1)^{\lfloor \ell \beta \rfloor}
		=1$ if and only if $0 \leq \{ \ell (\beta/2)\} < 1/2$. So it
		 suffices to prove the first claim in the proposition.
		
		During the proof we will repeatedly use the fact that \(\widehat{h}(P)\gg_E 1\), which holds by for example~\cite[Thm.~VIII.9.10(a)]{Sil09}. If \(t>(\log x)^{1/\epsilon}\) then the bound follows at once from this last result and the Prime Number Theorem. We will assume from now on that \(t\leq (\log x)^{1/\epsilon}\).
		
		Throughout the proof  
		we write $e(y) = \exp(2 \pi i y)$.
		
		We first apply Lemma~\ref{lem:erdos-turan} with \(M,t_m\) as follows. Denote the primes \(\ell\equiv s \bmod t, \ell\leq x\) by \(\ell_1,\dotsc,\ell_M\) and let \(t_m=\{\ell_m\beta/2\}\).
		There is \(c>0\) such that for each \(B>0\) with \(t\leq (\log x)^B\), we have
		\[
		M = \frac{x}{\varphi(t)\log x} +O\left(\frac{x}{\varphi(t)(\log x)^2}\right)+  O_B(x\exp(-c\sqrt{\log x})),
		\]
		by the Siegel-Walfisz theorem~\cite[Corollary~11.21, see also p5]{MV07}. In particular, setting \(B=1/\epsilon\) and recalling that \(t\leq (\log x)^{1/\epsilon}\) and thus \( \exp(c\sqrt{\log x}) \gg_\epsilon t (\log x)^2 \), it follows that
		\[
		{M} = \frac{x}{\varphi(t)\log x} + O\left(\frac{x}{\varphi(t)(\log x)^{2}}\right),
		\]
		We substitute this into 
		Lemma~\ref{lem:erdos-turan} to obtain
		\begin{multline}
			\#\big\{ m\leq M: \{\ell_m\beta/2\}\in[a,b)\big \}-\frac{(b-a)x}{\varphi(t)\log x}
			\\
			\ll
		\frac{x}{H\varphi(t)\log x}
			+\frac{x}{\varphi(t)(\log x)^{2}}
			+
			\sum_{1\leq j \leq H} \frac{1}{j}\left\lvert
			\sum_{\substack{\ell \leq x \\ \ell \equiv s \bmod t}}
			e(j\ell \beta/2)\right\rvert.
			\label{eqn:erdos-turn-applied}
		\end{multline}
		Our goal is to estimate the last sum above. 
		 As often happens it is convenient to count primes weighted by the von Mangoldt function.
		By partial summation,
		\begin{multline}
		\sum_{\substack{\ell \leq x \\ \ell \equiv s \bmod t \\ \ell \text{ prime}}} e(j \ell \beta/2) 
		=
		\frac{1}{\log x}\sum_{\substack{\ell \leq x \\ \ell \equiv s \bmod t\\ \ell \text{ prime}}}
		(\log \ell) e(j \ell \beta/2) \,	+
		\\
		\int_{1}^{x}\frac{1}{y(\log y )^2}
		\sum_{\substack{\ell \leq y \\ \ell \equiv s \bmod t\\ \ell \text{ prime}}}(\log \ell )
		e(j \ell \beta/2)\,\mathrm{d} y.\label{eqn:exp-sum-partial-summation}
		\end{multline}
		It follows from Lemma~\ref{lem:exp-sum} and 	
		 \eqref{eqn:exp-sum-partial-summation} that  for each non-zero integer $j$ we have
		\begin{equation*}
		\sum_{\substack{\ell \leq x \\ \ell \equiv s \bmod t\\ \ell \text{ prime}}}
		e(j \ell \beta/2)
		\ll_E
		\frac{x}{\log x}\cdot tj\sqrt{\frac{\widehat{h}(P)}{\log x}}\log\log\log x.
		\end{equation*}
		Together with  \eqref{eqn:erdos-turn-applied} and the choice
	\[
	H = 
	\left(\frac{\log x}{\widehat{h}(P)}\right)^{1/4}
	\left( t\varphi(t) \log\log\log x\right)^{-1/2},
	\]
	this implies that
	\begin{multline*}
	\#\big\{ m\leq M: \{\ell_m\beta/2\}\in[a,b)\big \}-\frac{(b-a)x}{\varphi(t)\log x}
	\ll
	\\
	\frac{x}{\log x}\cdot\left(\frac{1}{\varphi(t)\log x}
	+
	\left(\frac{\widehat{h}(P)}{\log x}\right)^{1/4}
	\left( \frac{t\log\log\log x}{ \varphi(t) }\right)^{1/2}
	\right).
	\end{multline*}
	The result follows since \(\widehat{h}(P)\gg_E 1\) and \(\varphi(t)\gg_\epsilon t^{1-\epsilon} \).
		\end{proof}

	Our main result is now as follows. In the statement $\ord(\chi(p))$ denotes the multiplicative order of the root of unity $\chi(p)$.
	
	\begin{proposition}\label{prop:odd_valuation}
		Let $\chi$ be a Dirichlet character with modulus $q(\chi)$. Let \(\pi\) be the period of \(\beta_n\bmod q(\chi)\). 
		Suppose that there exists \(\alpha\in\N\) such that
		\begin{equation}
			\gcd(\alpha, \pi )=1
			\label{eqn:gcd_alpha_pi}
		\end{equation}
		and such that one of the following holds:
		\begin{align}
			& \chi(|\beta_\alpha|) \neq  0,1, \quad \text{or} \nonumber \\
			& \chi(-|\beta_\alpha|)\neq 0,1\text{ and } 4\nmid \pi ,
			\quad \text{or} \label{eqn:technical-condition-for-periodicity-3} \\
			& \chi(-|\beta_\alpha|)\neq 0,1 \text{ and }P\in E(\mathbb{R})^{0}. \nonumber
		\end{align}
		Then for any \(\epsilon>0\) and \(x>\exp(\pi^2)\) we have
		\begin{multline*}
			\#\left\{ \text{primes } \ell \leq x :
			\ord(\chi(p)) \nmid v_p( \beta_\ell)
			\text{ for some prime }p \nmid q(\chi) 
			\right \}
			\\
			\geq
			\left(\frac{1}{2\varphi(\pi)}+O_{E,\epsilon}
			\left(\frac{\pi^\epsilon(\log\log\log x)^2{\widehat{h}(P)}}{\log x}\right)^{1/4}
			\right) \frac{x}{\log x}
		\end{multline*}
	\end{proposition}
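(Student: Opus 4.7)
The strategy is to show there are at least $\approx \frac{x}{2\varphi(\pi)\log x}$ primes $\ell\le x$ with $\chi(|\beta_\ell|)\notin\{0,1\}$. This will suffice by complete multiplicativity of $\chi$: since $\chi(p)=0$ precisely when $p\mid q(\chi)$, we have $\chi(|\beta_\ell|)=\prod_{p\nmid q(\chi)}\chi(p)^{v_p(\beta_\ell)}$ whenever $\chi(|\beta_\ell|)\neq 0$, and if additionally $\chi(|\beta_\ell|)\neq 1$ then at least one $p\nmid q(\chi)$ dividing $\beta_\ell$ must satisfy $\ord(\chi(p))\nmid v_p(\beta_\ell)$, as required by the statement.

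For primes $\ell\equiv\alpha\bmod\pi$, the periodicity result (Proposition~\ref{prop:actually-periodic}, together with the Chinese Remainder Theorem applied across the prime power factors of $q(\chi)$) gives $\beta_\ell\equiv\beta_\alpha\bmod q(\chi)$, hence $\chi(\beta_\ell)=\chi(\beta_\alpha)$. Writing $|\beta_\ell|=\sign(\beta_\ell)\beta_\ell$ and using complete multiplicativity, we see that $\chi(|\beta_\ell|)=\chi(|\beta_\alpha|)$ if $\sign(\beta_\ell)=\sign(\beta_\alpha)$, and $\chi(|\beta_\ell|)=\chi(-|\beta_\alpha|)$ otherwise. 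Thus under hypothesis~\eqref{itm:nonresidue-I} we count primes $\ell\equiv\alpha\bmod\pi$ with $\sign(\beta_\ell)=\sign(\beta_\alpha)$, and under~\eqref{itm:nonresidue-II} or~\eqref{itm:nonresidue-III} those with $\sign(\beta_\ell)=-\sign(\beta_\alpha)$.

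Controlling $\sign(\beta_\ell)$ is the purpose of the Silverman--Stephens law (Proposition~\ref{prop:silverman-stephens}). For odd primes $\ell$ we have $\sigma^{\ell-1}=1$, so that $\sign(\beta_\ell)=(-1)^{\lfloor\ell\beta\rfloor}$ when $P\in E(\R)^0$, and $\sign(\beta_\ell)=(-1)^{(\ell-1)/2}$ (depending only on $\ell\bmod 4$) when $P\notin E(\R)^0$. Under hypotheses~\eqref{itm:nonresidue-I} and~\eqref{itm:nonresidue-II}, and under~\eqref{itm:nonresidue-III} when $P\in E(\R)^0$, we apply Proposition~\ref{prop:equidistribution} with $t=\pi$ and the prescribed residue class and sign to obtain directly both the density $\tfrac{1}{2\varphi(\pi)}$ and the stated error term. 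In the remaining subcase of~\eqref{itm:nonresidue-III} (where $P\notin E(\R)^0$ and $4\nmid\pi$), the required sign condition amounts to $\ell$ lying in a prescribed residue class modulo $4$, which by CRT is compatible with $\ell\equiv\alpha\bmod\pi$ and defines a class modulo $\lcm(4,\pi)$ of totient $2\varphi(\pi)$; the Siegel--Walfisz theorem then supplies the density with an error well within the claimed bound. The main obstacle is the careful case analysis of signs, verifying in each of the three scenarios that an appropriate choice of sign indeed makes $\chi(|\beta_\ell|)\neq 0,1$ and accounts for the factor $\tfrac{1}{2\varphi(\pi)}$ (with the $\tfrac{1}{2}$ being forced in the worst case, e.g.\ when $\chi$ is odd and $\chi(|\beta_\alpha|)=-1$).
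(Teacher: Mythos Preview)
Your overall strategy and the reduction in the first two paragraphs are exactly those of the paper: exhibit many primes $\ell$ with $\chi(|\beta_\ell|)\notin\{0,1\}$ by combining periodicity mod $q(\chi)$ with control of $\sign(\beta_\ell)$ via Silverman--Stephens, then split according to whether $P\in E(\R)^0$.

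However, your case analysis has a genuine omission. You write that under hypothesis~\eqref{itm:nonresidue-I} you apply Proposition~\ref{prop:equidistribution}, but that proposition \emph{requires} $P\in E(\R)^0$. Hypothesis~\eqref{itm:nonresidue-I} imposes no such restriction, so the case ``\eqref{itm:nonresidue-I} holds and $P\notin E(\R)^0$'' is left uncovered; it is not the ``remaining subcase of~\eqref{itm:nonresidue-III}'' that you treat. The paper organises instead by whether $P\in E(\R)^0$: when $P\notin E(\R)^0$ it runs the Siegel--Walfisz argument modulo $q=\lcm(4,\pi)$ uniformly for whichever of \eqref{itm:nonresidue-I} or \eqref{itm:nonresidue-III} applies, after a short case analysis (their Cases~2.1--2.3) producing a residue class $\alpha+\iota\pi\bmod q$ which is both coprime to $q$ and forces the correct sign $(-1)^{(\ell-1)/2}$. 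In particular, when $4\mid\pi$ (forcing hypothesis~\eqref{itm:nonresidue-I} since \eqref{itm:nonresidue-II} and \eqref{itm:nonresidue-III} are then excluded) the class $\ell\equiv\alpha\bmod\pi$ already pins down $\ell\bmod 4$, and one checks the resulting sign is automatically $\sign(\beta_\alpha)$; this yields density $1/\varphi(\pi)\ge 1/2\varphi(\pi)$. Your argument becomes correct once you replace ``under \eqref{itm:nonresidue-I}'' by ``under \eqref{itm:nonresidue-I} with $P\in E(\R)^0$'' in the equidistribution step, and enlarge your Siegel--Walfisz step to handle all of $P\notin E(\R)^0$ (not just \eqref{itm:nonresidue-III}), including the subcase $4\mid\pi$.
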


	\begin{proof}
		From \eqref{eqn:technical-condition-for-periodicity-3}, there is  \(\tau\in\{\pm 1\}\) such that \(\chi(\tau|\beta_\alpha|)\neq 0,1\).
		We separate into two cases depending on the real properties of $P$.
		\subsection*{Case 1. $P\in E(\mathbb{R})^{0}$:}
		From Proposition~\ref{prop:silverman-stephens} we have $\sign(\beta_n) = \sigma^{n-1}(-1)^{\lfloor n \beta \rfloor}$ for some \(\sigma\in\{\pm1\}\) and some irrational number $\beta$. Now consider the set of primes 
		\[\Lambda=\{\ell \text{ prime} : \ell \equiv \alpha \bmod \pi,\,\sign(\beta_\ell) =\tau \sign(\beta_\alpha) \}\}.\]
		Let \(\ell\in\Lambda\). Then by periodicity we have $\beta_\ell \equiv \beta_\alpha \bmod q(\chi)$,  so $\chi(\beta_\ell) = \chi(\beta_\alpha)$ as $\chi$ is periodic modulo $q(\chi)$.  Moreover, we have arranged signs so that \(\chi(|\beta_\ell|)=\chi(\tau|\beta_\alpha|)\neq 0,1\). Hence as $\chi$ is multiplicative we deduce the existence of a prime factor $p$ of $|\beta_\ell|$ with \(p\nmid q(\chi)\) and  $\ord(\chi(p)) \nmid v_p( \beta_\ell).$ It thus suffices to note that \(\{\ell\in\Lambda:\ell\leq x\}\) satisfies the required lower bound by \eqref{eqn:gcd_alpha_pi} and Proposition~\ref{prop:equidistribution}.
		
		\subsection*{Case 2. $P\not\in E(\mathbb{R})^{0}$:} 
		In order to  handle a number of sub-cases simultaneously, we show that there is \(\iota\in\{0,1,2,3\}\) such that \(\alpha+\iota \pi\) is odd and 
		\begin{equation}\label{eqn:sign}
			(-1)^{(\alpha+\iota \pi-1)/2}=
			\begin{cases}
				\tau \sign (\beta_\alpha),& \text{if }\alpha \text{ is even}, \\
				\tau (-1)^{(\alpha-1)/2},& \text{if }\alpha \text{ is odd}.		
			\end{cases}
		\end{equation}

		\subsection*{Case 2.1. \(2 \mid \alpha\)} Here \(\pi\) is odd by 	\eqref{eqn:gcd_alpha_pi}. Choosing \(\iota \in \{1,3\}\) we can arrange for \(\frac{\alpha+\iota\pi-1}{2}\) to be odd or even, and hence \((-1)^{(\alpha+\iota \pi-1)/2}= -1\) or 1 to satisfy  \eqref{eqn:sign}.
		
		\subsection*{Case 2.2. \(2\nmid \alpha\) and \(4\mid \pi\)}  Here we have \(\tau=1\) by \eqref{eqn:technical-condition-for-periodicity-3}. Let \(\iota=0\) and then \((-1)^{(\alpha-1)/2}=\tau(-1)^{(\alpha-1)/2}\) as required for \eqref{eqn:sign}.
		
		\subsection*{Case 2.3.  \(2\nmid \alpha\) and \(4\nmid \pi\)} We can choose \(\iota \in \{0,2\}\) so that \(\iota \pi/2 \) is odd or even  as needed. So we arrange \((-1)^{\iota \pi/2}=\tau\) which gives \eqref{eqn:sign}. \\

		We now let \(q=\lcm(4,\pi)\) and consider primes $\ell$ of the form $\ell\equiv \alpha+\iota\pi \bmod q$. By Proposition~\ref{prop:silverman-stephens} and \eqref{eqn:sign} we then have $\sign(\beta_\ell) = \tau\sign(\beta_\alpha)$. But $\beta_\ell \equiv \beta_\alpha \bmod q(\chi)$ by periodicity, so $\chi(|\beta_\ell|)=\chi(\tau|\beta_\alpha|)\neq 0,1$ by \eqref{eqn:technical-condition-for-periodicity-3} as $\chi$ is periodic modulo $q(\chi)$. We are now in a similar situation to Case 1. Here \eqref{eqn:gcd_alpha_pi} and the fact that $\alpha + \iota \pi$ is odd implies that $\gcd(\alpha+\iota\pi, q) = 1$. Together with the assumption that \(\pi< \sqrt{\log x}\) in the proposition, this allows us to apply the Siegel-Walfisz Theorem~\cite[Corollary~11.21]{MV07} to show that the set under consideration has  size
		\[
		\frac{x}{\varphi(q) \log x}+O\left(\frac{qx}{(\log x)^2}\right)\geq \left(\frac{1}{2\varphi(\pi)} + O\left(\frac{\pi}{\log x}\right)\right)\frac{x}{\log x}.
		\]
		Using again the fact that \(\pi< \sqrt{\log x}\), the claim follows.
		%
		%
		%
	\end{proof}

	\begin{remark} \label{rem:odd_valuation}
		Aside from finitely many exceptions,
		we expect that the primes $p$ constructed in Proposition~\ref{prop:odd_valuation}
		satisfy the stronger condition  $v_p( \beta_{\ell}) = 1$. This is the condition referred to in \S\ref{sec:proof} as being a ``non-Wieferich prime for base $P\in E$''.
	\end{remark}

	\section{Brauer groups} \label{sec:denom}
	The aim of this section is  to prove Theorem~\ref{thm:Brauer_intro}. We begin with some preliminaries on Brauer groups.
	\subsection{Recap of Brauer groups}
	For a scheme $X$ we denote by $\Br X = \rmH^2(X,\Gm)$ its (cohomological) Brauer group. If $X$ is regular and integral and $D \subset X$ is a regular integral divisor, then there is an associated residue map
	$$\res_D : \Br (X \setminus D) [\ell^\infty] \to \rmH^1(D, \Q/\Z)$$
	where $\ell$ is any prime which is invertible on $X$. We say that $b \in \Br (X \setminus D)$ whose order is invertible on $X$ is \emph{unramified} at $D$ if $\res_D(b) = 0$; in which case Grothendieck's purity theorem \cite[Thm.~3.7.1]{CTS20} implies that $b \in \Br X$.
	
	For any $b \in \Br X$ and any point $x \in X$, there is a well-defined specialisation $b(x) \in \Br \kappa(x)$. For a field $k$ we denote by
	\begin{equation} \label{def:X(k)_b}
		X(k)_b = \{ x \in X(k) \colon b(x) = 0 \in \Br k\}
	\end{equation}
	the zero locus of $b$ on $k$-rational points. If only $ b \in \Br \kappa(X)$, then we abuse notation and write 
	$$X(k)_b = \{ x \in X(k) : b \text{ defined at } x, b(x) = 0 \in \Br k\}.$$
	If $\dim X = 1$, then this just means we implicitly remove the finitely many points where $b$ is ramified.
	For a number field $k$, there is an exact sequence
	\begin{equation} \label{eqn:CFT}
		0 \to \Br k \to \bigoplus_v \Br k_v \to \Q/\Z \to 0
	\end{equation}
	where the last map is the sum of all local invariants
	$\inv_v: \Br k_v \to \Q/\Z$ of $k$ \cite[Thm.~13.1.8]{CTS20}. The local invariant is defined in terms of residues, by applying the residue to $\Br \O_v$ then using $\rmH^1(\F_v,\Q/\Z) = \Hom(\Gal(\bar{\F}_v/\F_v),\Q/\Z)$ and evaluating the resulting homomorphism at the Frobenius element \cite[Def.~13.1.7]{CTS20}.
	
	\subsection{Specialisation of Brauer groups on elliptic curves}
	We now prepare for the proof of Theorem~\ref{thm:Brauer_intro}. Let $E$ be an elliptic curve over $\Q$ given by a Weierstrass equation with coefficients in $\Z$. Let $b \in \Br \Q(E)$ which we assume is ramified at some rational point $P$. The residue of $b$ at $P$ is an element of $\rmH^1(\Q, \Q/\Z)$. We associate to this a Dirichlet character $\chi$ as follows.

	\subsubsection{Associated Dirichlet character}\label{sec:assoc-char}
	Firstly the residue of $b$ at $P$ yields a group homomorphism via the identification $\rmH^1(\Q,\Q/\Z) = \Hom(\Gal(\bar{\Q}/\Q), \Q/\Z)$. Let $K/\Q$ be the cyclic extension determined by the kernel. By the Kronecker--Weber Theorem, there is an embedding $K \subseteq \Q(\mu_q)$ where $q$ is the conductor of $K$. As $\Gal(\Q(\mu_q)/\Q) \cong (\Z/q\Z)^\times$ canonically, composing with $\Gal(\Q(\mu_q)/\Q) \to \Gal(K/\Q)$ yields a homomorphism $(\Z/q\Z)^\times \to \Q/\Z$. Recalling that we chose an embedding  $\Q/\Z \subset \C^\times$ in \S\ref{sec:not}, we obtain a primitive Dirichlet character $\chi$ modulo $q$ on extending to $\Z$. (In our work we will only care about the order of $\chi(p)$, which is independent of the choice of embedding $\Q/\Z \subset \C^\times$.) For a prime $p$, this satisfies
	$$\chi(p) = 0 \iff p \mid q, \quad \chi(p) = 1 \iff p \text{ is completely split in }K.$$
	For example, if $K = \Q(\sqrt{D})$ where $D$ is a fundamental discriminant, then we obtain the quadratic character $m \mapsto \left(\frac{D}{m}\right)$ given by the Kronecker symbol.
	
	\subsubsection{Specialisation}
	Let $\mathcal{E}$ be the natural projective model for $E$ over $\Z$ determined by the Weierestrass equation. There exists a non-empty regular open subscheme $\mathcal{E}^\circ \subseteq \mathcal{E}$ such that $b \in \Br \mathcal{E}^{\circ}$.
	
	We choose a finite set of primes $S$ of $\Q$ containing all primes dividing $\ord(\chi)q(\chi)$, where $q(\chi)$ is the conductor of $\chi$.
	We then have the following criterion for triviality of the specialisation, which gives a  direct way of evaluating Brauer group elements via Dirichlet characters.
	
	\begin{proposition} \label{prop:has_Q_p_point}
		Assume that $b$ is ramified at $O$ with associated Dirichlet character $\chi$.
		Let $p \notin S$ and $Q \in \mathcal{E}^\circ(\Q_p) $ with $v_p(Q) \geq 1$. 
		Then $b(Q) = 0 \in \Br \Q_p$ if and only	if $\ord(\chi(p)) \mid v_p(Q)$.
	\end{proposition}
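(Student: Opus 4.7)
The approach is to localise near \(O\) and use Grothendieck's residue map together with cup products to decompose \(b\). Let \(R = \mathcal{O}^{\mathrm{h}}_{E_{\Q_p}, O}\) denote the Henselization of \(E\) at \(O\) over \(\Q_p\); this is a DVR with fraction field \(K\), residue field \(\Q_p\), and uniformizer \(t = -x/y\), the latter chosen so that \(v_p(t(Q)) = v_p(Q)\) by Lemma~\ref{lem:denominator_p_adic} combined with the relation \(v_p(y) = \tfrac{3}{2}v_p(x)\) from the Weierstrass equation. The pullback \(b_K \in \Br K\) has residue equal to the restriction \(\chi_p\) of \(\chi\) to \(G_{\Q_p}\) by compatibility of residues with base change, and the hypothesis \(p \nmid q(\chi)\) forces \(\chi_p\) to be unramified.

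The key step is to write
\[
b_K = \chi_p \cup (t) + c, \qquad c \in \Br R \cong \Br \Q_p,
\]
using Grothendieck's purity: both sides have the same residue \(\chi_p\), so their difference is unramified and lifts uniquely to \(\Br R\), which equals \(\Br \Q_p\) since \(R\) is a Henselian DVR whose residue field has characteristic zero. The point \(Q\) gives a \(\Q_p\)-algebra map \(R \to \Q_p\) sending \(t \mapsto t(Q)\), which extends to \(K \to \Q_p\) because \(t(Q) \neq 0\). Pulling back gives \(b(Q) = (\chi_p, t(Q)) + c \in \Br \Q_p\). The classical invariant formula for cyclic algebras with unramified character (see e.g.~\cite{CTS20}) then reads
\[
\inv_p(\chi_p, t(Q)) = v_p(t(Q)) \cdot \chi(\Frob_p) = v_p(Q) \cdot \chi(p),
\]
where the identification \(\chi(\Frob_p) = \chi(p) \in \Q/\Z\) is precisely the Kronecker–Weber construction of \(\chi\) recalled in \S\ref{sec:assoc-char}.

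To handle the constant \(c\), I would run the entire decomposition globally over \(\Z\) rather than over \(\Z_p\): this produces a class \(c^{\mathrm{gl}} \in \Br \mathcal{O}^{\mathrm{h}}_{E,O} \cong \Br \Q\) whose base change to \(\Br \Q_p\) is \(c\). Since \(c^{\mathrm{gl}} \in \Br \Q\) has nontrivial invariant at only finitely many primes, enlarging \(S\) once at the outset to include these primes ensures \(\inv_p c = 0\) for every \(p \notin S\). Combining the two contributions yields \(\inv_p b(Q) = v_p(Q) \cdot \chi(p)\); since \(\inv_p \colon \Br \Q_p \to \Q/\Z\) is an isomorphism, \(b(Q) = 0\) holds if and only if \(v_p(Q) \cdot \chi(p) = 0\) in \(\Q/\Z\), which in turn is exactly the condition \(\ord(\chi(p)) \mid v_p(Q)\).

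I expect the main subtle point to be the decomposition \(b_K = \chi_p \cup (t) + c\) and its global counterpart: one needs to invoke purity in the Henselian setting, identify the Brauer group of a Henselian DVR with that of its residue field, and verify that the ``constant'' term really lifts to a genuine global element of \(\Br \Q\) so that the finite set \(S\) can be chosen uniformly in \(p\). A secondary technicality is matching the sign and normalisation conventions for cup products, residue maps, and the Artin reciprocity map, so that \(\chi(\Frob_p)\) does equal the Dirichlet value \(\chi(p)\) fixed by the embedding \(\Q/\Z \subset \C^\times\) in \S\ref{sec:not}.
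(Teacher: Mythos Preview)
Your overall strategy---decompose \(b\) near \(O\) as a cyclic algebra plus a constant, then evaluate and read off the invariant---is natural, and the target formula \(\inv_p b(Q) = v_p(Q)\cdot\chi(p)\) is exactly what the paper obtains. But there is a genuine gap at the evaluation step. You take \(R = \mathcal{O}^{\mathrm{h}}_{E_{\Q_p},O}\), the Henselisation of the local ring of the \emph{generic fibre} at \(O\); this is a DVR whose spectrum has only two points, the closed point \(O\) and the generic point. There is no \(\Q_p\)-algebra homomorphism \(R \to \Q_p\) with \(t \mapsto t(Q)\neq 0\): such a map would have kernel \((0)\), hence give an embedding of \(K = \mathrm{Frac}(R)\) into \(\Q_p\), impossible since \(K\) is algebraic over \(\Q_p(E)\) and so has transcendence degree \(1\) over \(\Q_p\). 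Concretely, \(1/(t - t(Q))\) lies in \(\mathcal{O}_{E_{\Q_p},O}\subset R\) (it is a unit at \(O\)) but has a pole at \(Q\). So the asserted identity \(b(Q) = (\chi_p, t(Q)) + c\) is not justified, and with it the whole treatment of the constant \(c\) collapses: the ``constant'' you extract from the Henselian picture is the value at the closed point \(O\), not at \(Q\), and there is no a priori reason these agree.

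The paper sidesteps this by working not on the generic fibre but on the two-dimensional integral model \(\mathcal{E}\otimes\Z_p\). The closures \(\mathcal{O}\) and \(\mathcal{Q}\) of \(O\) and \(Q\) are both copies of \(\Spec\Z_p\) inside \(\mathcal{E}\otimes\Z_p\), meeting at the closed point \(O_p\) with intersection multiplicity \(v_p(Q)\). One then applies functoriality of the residue map \cite[Thm.~3.7.5]{CTS20} to the inclusion \(f\colon\mathcal{Q}\hookrightarrow (\mathcal{E}^\circ\otimes\Z_p)\cup\mathcal{O}\): the residue of \(f^*b = b(Q)\in\Br\Q_p\) along \(O_p\) is \(v_p(Q)\) times the pullback of \(\res_{\mathcal{O}} b\), and evaluating at Frobenius gives \(\inv_p b(Q) = v_p(Q)\,\chi(p)\) in one stroke---no Henselian decomposition, no constant term, no enlargement of \(S\). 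If you wish to repair your argument along its own lines, you should Henselise (or complete) at the closed point \(O_p\) of the \emph{integral} model rather than at \(O\) on \(E_{\Q_p}\); then \(Q\), viewed as a \(\Z_p\)-point reducing to \(O_p\), genuinely factors through that local ring, and your computation goes through---though at that stage you are essentially reproducing the paper's argument in different language.
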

	\begin{proof}
		We let $\mathcal{O}$ and $\mathcal{Q}$
		be the closure of $O$ and $Q$ in $\mathcal{E}_p:=\mathcal{E} \otimes \Z_p$, respectively.
		Note that $\mathcal{O}$ is a smooth subscheme of $\mathcal{E}_p$, 
		since the partial derivative
		with respect to $z$ is non-zero at $O$ modulo $p$.
		As $v_p(Q) \geq 1$, by definition $Q \in E_1(\Q_p)$, so $\mathcal{Q} \equiv \O \bmod p$.
		Thus
		$\mathcal{Q} \cap \mathcal{O} = v_p(Q) O_p$ as a divisor on $\mathcal{Q}$,
		where $O_p = (0:1:0) \bmod p$. 
		
		We now apply \cite[Thm.~3.7.5]{CTS20} with 
		$X = (\mathcal{E}^\circ \otimes \Z_p) \cup \mathcal{O},
		Y = \mathcal{O}$ and
		$f: \mathcal{Q} \to X$ the natural inclusion. This gives that
		the residue $\res_{O_p}(f^* b) \in \rmH^1(O_p,\Q/\Z)$
		is equal to the image of $v_p(Q) \cdot( \res_{\mathcal{O}} b)$
		under the map
		$$\rmH^1(\mathcal{O}, \Q/\Z) \to \rmH^1(O_p,\Q/\Z).$$
		Let $\psi \in \Hom(\pi_1(\Spec \Z_p), \Q/\Z)$ be the homorphism
		corresponding to $\res_{\mathcal{O}} b$. The local invariant $\inv_p (f^*b)$
		is the evaluation of $\res_{O_p}(f^* b)$ at the Frobenius element.
		We thus obtain
		$$\inv_p (f^*b) = v_p(Q) \psi(\Frob _p).$$
		But the Dirichlet character $\chi$ is defined by $\chi(p) = \psi(\Frob_p)$
		after using our choice of embedding $\Q/\Z \subset \C^\times$.
		This gives
		\[\inv_p (f^*b) = 0 \quad \iff \quad \chi(p)^{v_p(Q)} = 1. \qedhere\]
	\end{proof}
	
	\subsection{Proof of Theorem~\ref{thm:Brauer_intro}}\label{sec:brauer_proof}
	
	We now prove our main result. We first note that by Remark~\ref{rem:uniform-period}, the period \(\pi\) is bounded in terms of \(E\) and \(q(\chi)\) only. So we focus on the main bound in the theorem.
	
	We claim that we may assume that $b$ is actually ramified at $O$, rather than just some multiple $mP$ of $P$. Indeed, let $b$ be ramified at $mP$ and consider $t_{-mP}^*b$ where $t_{-mP}$ denotes translation by $-mP$. Then $t_{-mP}^*b$ is ramified at $O$. Assume we have proved the theorem in this case. Then for $n \in \Z$ we have $t_{-mP}^*b(nP) = 0$ if and only if $b(nP + mP) = 0$, so the two sets being counted differ by translation by $m$. This changes the size of the set by at most \( m\).  In conclusion, we may assume that \(m=0\) and $b$ is ramified at $O$. We will do so until the last step of the proof, when we will verify that an error term of size \( m\) can be absorbed into the constant \(C_{E,P,b}\).
	
	We now begin the proof in earnest. Choose a finite set of primes $S$ which contains all primes dividing $\ord(\chi) q(\chi)$ and all primes \(p\) at which \(E\) has bad reduction. Let
	\begin{align}
		\mathcal{P} &= 
		\{\text{primes } p \notin S : \ord(P \bmod p) \text{ is prime and } \ord(\chi(p)) \nmid v_p(\ord(P \bmod p)P)\}, 
		\nonumber
		\\
		\Lambda &= \{ \ord(P \bmod p) : p \in \mathcal{P}\},
		\nonumber
		\\
		T&=
		\{p \text{ prime}: v_p(P)>0\}
		.
		\label{eqn:T}
	\end{align}
	For $p \in \mathcal{P}$, we denote by $\ell_p = \ord(P \bmod p)$.
	To prove the result, we require the following, which comes from our analysis of the elliptic divisibility sequence $\beta_n$ associated to $P$.  We let $\pi$ denote the period of the sequence $\beta_n \bmod q(\chi)$.
	
	\begin{lemma} \label{lem:Lambda}
		For any \(\epsilon>0\) and \(x\gg_{E,\chi} 1\) we have
		\begin{equation*}
			{\#\{ \ell \in \Lambda: \ell \leq x\}}
			\geq
			\left(\frac{1}{2\varphi(\pi)}+O_{\epsilon,E}
			\left(\frac{\pi^\epsilon(\log\log\log x)^2{\widehat{h}(P)}}{\log x}\right)^{1/4}
			\right) \frac{x}{\log x}
			-\#T-\#S.
		\end{equation*}
	\end{lemma}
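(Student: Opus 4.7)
The plan is to deduce Lemma~\ref{lem:Lambda} directly from Proposition~\ref{prop:odd_valuation}, with a controlled error coming from the finite sets \(S\) and \(T\). First I would check that the hypotheses of Proposition~\ref{prop:odd_valuation} are in force: they match the conditions on the index \(\alpha\) already assumed in Theorem~\ref{thm:Brauer_intro}. Applying that proposition to \(\chi\) yields the desired main term as a lower bound on the size of the set
\[
\Omega(x) = \{\ell \leq x \text{ prime} : \ord(\chi(p)) \nmid v_p(\beta_\ell) \text{ for some prime } p \nmid q(\chi)\}.
\]
The requirement \(x > \exp(\pi^2)\) in the proposition is absorbed into \(x \gg_{E,\chi} 1\) using Remark~\ref{rem:uniform-period}.

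Next I would show that, apart from a bounded number of exceptions, every \(\ell \in \Omega(x)\) in fact belongs to \(\Lambda\). Pick any witness prime \(p\) for \(\ell\) as in the definition of \(\Omega(x)\), and suppose first that \(p \notin S \cup T\). Since \(p \notin T\) we have \(v_p(P) = 0\), so Lemma~\ref{lem:p-adic_beta} gives \(v_p(\beta_\ell) = v_p(\ell P)\). The non-divisibility condition \(\ord(\chi(p)) \nmid v_p(\beta_\ell)\) forces \(v_p(\beta_\ell) \geq 1\), which implies \(\ell P \equiv O \bmod p\). Combining primality of \(\ell\) with \(\ord(P \bmod p) \geq 2\) (as \(p \notin T\)) gives \(\ord(P \bmod p) = \ell\), which is prime. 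Then the condition \(\ord(\chi(p)) \nmid v_p(\ord(P \bmod p) P) = v_p(\ell P)\) is exactly the hypothesis defining \(\mathcal{P}\), so \(p \in \mathcal{P}\) and \(\ell = \ord(P \bmod p) \in \Lambda\).

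It remains to bound the number of \emph{bad} \(\ell \in \Omega(x)\): those for which every witnessing prime lies in \(S \cup T\). For any prime \(p\), Lemma~\ref{lem:gcd} shows that \(\{n \in \Z : p \mid \beta_n\}\) is either empty or of the form \(N_p \Z\) for some \(N_p \geq 2\) (the lower bound comes from \(\beta_1 = 1\)); hence the only prime \(\ell\) with \(p \mid \beta_\ell\) is \(\ell = N_p\) itself, and only when \(N_p\) is prime. A bad \(\ell\) necessarily satisfies \(p \mid \beta_\ell\) for some \(p \in S \cup T\), so the number of bad \(\ell\) is at most \(\#(S \cup T) \leq \#S + \#T\). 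Subtracting this error from the lower bound supplied by Proposition~\ref{prop:odd_valuation} yields the lemma.

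The argument is essentially bookkeeping once Proposition~\ref{prop:odd_valuation} is in hand; the only point requiring a little care is the final bad-\(\ell\) bound, which is handled by observing that each exceptional prime \(p \in S \cup T\) can account for at most one prime value of \(\ell\), namely its rank of apparition in the sequence \((\beta_n)\).
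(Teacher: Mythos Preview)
Your argument is correct and follows essentially the same route as the paper: apply Proposition~\ref{prop:odd_valuation} to produce the main term, then discard at most \(\#S+\#T\) exceptional primes \(\ell\) to land in \(\Lambda\). The only minor organisational difference is that the paper treats the exclusions for \(T\) and \(S\) separately (showing that a witness \(p\in T\) forces \(\ell=p\in T\), and then that for each \(p\in S\) at most one \(\ell=\ord(P\bmod p)\) can arise), whereas you handle both at once via the rank of apparition \(N_p\) in the strong divisibility sequence \((\beta_n)\); this is a clean unification but not a genuinely different idea.
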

	One might assume that the \(O_E(\,\cdot\,)\) term is the largest error term here, but at the end of this section it will actually be \(\#T\) which contributes the most to our final bound.

	\begin{proof}
		Consider the multiples $\ell P$ where $\ell$ runs over all primes. Let $p$
		be such that $v_p(\ell P) > v_p(P)$. Then we claim that either \(\ell\in T\), or $\ell = \ord(P \bmod p)$. Indeed,  as $v_p(\ell P) > 0$ we have $\ell P \equiv O \bmod p$. As $\ell$ is prime, we see that either $\ell = \ord(P \bmod p)$ or $P \equiv O \bmod p$. In the latter case we have {\(p\in T\). In that case we also have \(P\in E_{v_p(P)}(\mathbb Q_p), \)  \(P\notin E_{v_p(P)+1}(\mathbb Q_p) \) and \(\ell P \in E_{v_p(P)+1}(\mathbb Q_p)\), and hence \(\ell = |E_{v_p(P)}(\mathbb Q_p) /E_{v_p(P)+1}(\mathbb Q_p) |\)  which is then \(=p\) by Lemma~\ref{lem:formal-group}.  Thus \(\ell \in T\) as claimed.}

		Now Remark \ref{rem:uniform-period}, Proposition~\ref{prop:odd_valuation}, and our assumptions in Theorem~\ref{thm:Brauer_intro}
		imply that there is a set of at least 
		\[
		\left(\frac{1}{2\varphi(\pi)}+O_{E,\epsilon}
		\left(\frac{\pi^\epsilon(\log\log\log x)^2{\widehat{h}(P)}}{\log x}\right)^{1/4}
		\right) \frac{x}{\log x}
		\]
		primes $\ell\leq x$
		such that there exists $p \nmid q(\chi)$ with
		$\ord(\chi(p)) \nmid v_p( \beta_{\ell})$. For such primes we have \(v_p(\beta_\ell)>0\) and hence $v_p(\ell P) > v_p(P)$ by Lemma~\ref{lem:p-adic_beta}. Excluding the finitely many primes \(\ell \in T\), we have $\ell = \ord(P \bmod p)$  by the previous paragraph. 
		So by Lemma~\ref{lem:p-adic_beta} again we have $v_p( \beta_{\ell}) = v_p(\ell P)$. 
		Similarly, as $\ell = \ord(P \bmod p)$, we see that by excluding at most \(\# S\)  of the primes $\ell$ we may assume 
		that $p \notin S$. Such primes now lie in $\Lambda$, hence give the result.
	\end{proof}

	We now sieve modulo such primes.	Our approach is inspired by the version of the elliptic sieve given in \cite[\S 4.4]{kow08} via the large sieve. From a philosophical perspective, we sieve with respect to the maps
	$$E(\Q) \to E(\Z/p^2\Z), \quad p \in \mathcal{P}.$$
	This is literally true providing $v_p(\ell_p P) =1$, but in general we have no control over the size of this valuation, only its value modulo $\ord \chi$ (cf.~Remark~\ref{rem:odd_valuation}). What we actually do is remove suitable multiples of $P$ where we \textit{can} control the valuation. This is a key difference with our approach and that taken in \cite[\S 4.4]{kow08}, as Kowalski only needed to sieve modulo $p$. The precise result is as follows.
	
	\begin{lemma} \label{lem:mod_lp}
		There exists a finite subset $\mathcal{N} \subset \Z$, depending only on \(E\) and \(b\), as follows.
		Let $p \in \mathcal{P}$  and $n \in \Z \setminus \mathcal{N}$
		with $n \equiv \ell_p,2\ell_p,  \ldots, (p-1) \ell_p \bmod p \ell_p$. 
		Then $b(n P) \neq 0 \in \Br \Q_p$.
	\end{lemma}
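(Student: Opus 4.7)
The plan is to apply Proposition~\ref{prop:has_Q_p_point} to the point $Q = nP$. For this we need three conditions: (i) $nP \in \mathcal{E}^\circ(\Q_p)$, (ii) $v_p(nP) \geq 1$, and (iii) $\ord(\chi(p)) \nmid v_p(nP)$. Condition (i) will be arranged by excluding finitely many $n$ (giving our set $\mathcal{N}$), while (ii) and (iii) will both follow from a single valuation computation which transports the defining property of $\mathcal{P}$ from $\ell_p P$ to $nP$.

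The crux of the argument is the observation that the congruence $n \equiv k\ell_p \bmod p\ell_p$ with $k \in \{1,\ldots,p-1\}$ is exactly equivalent to writing $n = n' \ell_p$ with $\gcd(n',p)=1$ (namely $n' = k + pm$ where $n = k\ell_p + pm\ell_p$). Setting $Q = \ell_p P$, we have $Q \in E_1(\Q_p)$ because $\ell_p = \ord(P \bmod p)$ and $E$ has good reduction at $p$ (since $p\notin S$). The coprimality $\gcd(n',p)=1$ then lets me invoke the equality case of Lemma~\ref{lem:formal-group} to obtain
\[
    v_p(nP) = v_p(n'Q) = v_p(Q) + v_p(n') = v_p(\ell_p P).
\]
This yields both $v_p(nP) \geq 1$ and, since $p \in \mathcal{P}$, the key non-divisibility $\ord(\chi(p)) \nmid v_p(\ell_p P) = v_p(nP)$.

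For the excluded set, I take $\mathcal{N}$ to consist of those integers $n$ for which $nP$ coincides with a pole of $b$ on $E$. This is finite since $b$ has only finitely many poles on the generic fibre and $P$ is non-torsion. Because $\mathcal{E}$ is proper over $\Z$, a $\Q_p$-point of $\mathcal{E}^\circ$ amounts to a $\Q_p$-point of $E$ whose generic-fibre image avoids these poles, so $n\notin\mathcal{N}$ implies $nP \in \mathcal{E}^\circ(\Q_p)$ and Proposition~\ref{prop:has_Q_p_point} then delivers $b(nP) \neq 0 \in \Br\Q_p$. I do not foresee any serious obstacle: the only subtle design feature is the restriction to residues $k\ell_p$ with $1\leq k\leq p-1$, engineered precisely so that $p\nmid n'$, which upgrades the lower bound in Lemma~\ref{lem:formal-group} to an equality. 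Had we allowed $k\equiv 0 \bmod p$, then $p\mid n'$ and we would only obtain an inequality; controlling $v_p(nP)$ exactly (not merely its positivity) is exactly the issue flagged in \S\ref{sec:proof} and Remark~\ref{rem:odd_valuation} as the reason why the present sieve must operate modulo $p\ell_p$ rather than just $\ell_p$.
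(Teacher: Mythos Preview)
Your proposal is correct and follows essentially the same approach as the paper: write $n = n'\ell_p$ with $\gcd(n',p)=1$, apply the equality case of Lemma~\ref{lem:formal-group} to $\ell_p P \in E_1(\Q_p)$ to obtain $v_p(nP) = v_p(\ell_p P)$, invoke the defining property of $\mathcal{P}$, and conclude via Proposition~\ref{prop:has_Q_p_point} after excluding finitely many $n$. The only cosmetic difference is that the paper takes $\mathcal{N}$ to be the set of $n$ with $nP \in (\mathcal{E}\setminus\mathcal{E}^\circ)(\Q)$ rather than just the poles of $b$, which is the correct formulation since $\mathcal{E}^\circ$ is an open subscheme of the arithmetic surface and may omit more than the generic-fibre ramification locus; your justification that avoiding the poles suffices for $nP \in \mathcal{E}^\circ(\Q_p)$ is not quite complete as stated, but this is easily fixed by enlarging $S$ (or defining $\mathcal{N}$ as the paper does).
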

	\begin{proof}
		As $\gcd(p,n/\ell_p) = 1$, 
		applying Lemma~\ref{lem:formal-group} to $\ell_p P  \in E_1(\Q_p)$ gives
		$$v_p(nP) = v_p(\ell_p P) + v_p((n/\ell_p)) =  v_p(\ell_p P),$$
		which is not divisible by $\ord(\chi(p))$ by the definition of $\mathcal{P}$.
		The result follows from Proposition~\ref{prop:has_Q_p_point} provided we exclude the finitely many points
		in $(\mathcal{E} \setminus \mathcal{E}^\circ)(\Q)$.
	\end{proof}
	
	We now stipulate that the condition in Lemma~\ref{lem:mod_lp} cannot hold at ``moderately sized'' primes $\ell \in \Lambda$, to deduce that the quantity in Theorem~\ref{thm:Brauer_intro} is at most
	$$
	\#\mathcal{N}\cup\big\{ |n| \leq B : (p \in \mathcal{P}\text{ and } \log B \leq \ell_p \leq B)
	\Rightarrow (n \not \equiv 0 \bmod \ell_p \text{ or } n \equiv 0 \bmod p \ell_p)\big\},
	$$
	were \(B\) is a parameter, assumed to be sufficiently large in terms of \(E\) and \(\chi\).
	This is bounded above by $N_0(B) + N_1(B)$ where 
	\begin{align*}
		N_0(B) = & \# \{ |n| \leq B : n \not \equiv 0 \bmod \ell \text{ for all  $\ell \in \Lambda$ with } 
		\log B \leq \ell \leq B \},
		\\
		N_1(B) = & 
		\# \{ |n| \leq B : n \equiv 0 \bmod p \ell_p, \text{ for some $p \in \mathcal{P}$ with } \log B \leq \ell_p \leq B\}.
	\end{align*}
	
	\begin{lemma}\label{lem:Hasse-application}
		$N_1(B) \ll B/\log B$.
	\end{lemma}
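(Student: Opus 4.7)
The plan is to apply the union bound and then use Hasse's theorem to reduce to a convergent sum.

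First I would observe that if $p\ell_p > 2B$, the only $|n|\leq B$ with $p\ell_p\mid n$ is $n=0$; hence
\[
N_1(B) \leq 1 + \sum_{\substack{p\in\mathcal{P} \\ \log B \leq \ell_p \\ p\ell_p \leq 2B}} \frac{2B}{p\ell_p}.
\]
The crucial input is Hasse's theorem: as $\ell_p=\ord(P\bmod p)$ divides $\#E(\mathbb{F}_p)\leq p+1+2\sqrt{p}$, we have $p\geq \ell_p - 2\sqrt{\ell_p} - 1 \gg \ell_p$ for large $\ell_p$. Two consequences follow: $p\ell_p\gg \ell_p^2$, and $p\ell_p\leq 2B$ combined with $\ell_p\geq \log B$ forces $\ell_p\ll \sqrt{B}$.

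Next I would group the primes $p$ by the value $\ell=\ell_p$. For each prime $\ell$ in the allowed range,
\[
\sum_{\substack{p\in\mathcal{P}\\ \ell_p=\ell \\ p\ell \leq 2B}} \frac{1}{p\ell} \leq \frac{1}{\ell}\sum_{\substack{p \text{ prime} \\ \ell/2 \leq p\leq 2B/\ell \\ \ell\mid \#E(\mathbb{F}_p)}} \frac{1}{p}.
\]
An effective Chebotarev density theorem applied to the $\ell$-division field $\mathbb{Q}(E[\ell])$ bounds the number of such $p$ below $X$ by $O(X/(\ell\log X))$; partial summation then gives that the inner sum is $O((\log\log B)/\ell^2)$. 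Summing the tail $\sum_{\ell \text{ prime},\, \ell \geq \log B} 1/\ell^3 \ll 1/(\log B)^2$ yields
\[
\sum_p \frac{1}{p\ell_p} \ll \frac{\log \log B}{(\log B)^2},
\]
which multiplied by $2B$ gives the claimed bound $N_1(B)\ll B/\log B$, with room to spare. The trivial count $\pi(2B/\log B)\ll B/(\log B)^2$ absorbs the contribution of the ``$+1$'' term.

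The main obstacle will be establishing the upper bound on $\#\{p\leq X : \ell \mid \#E(\mathbb{F}_p)\}$ uniformly in $\ell$ in the required range; this follows from standard effective Chebotarev applied to $\mathbb{Q}(E[\ell])/\mathbb{Q}$, though one could alternatively replace this input by a direct argument bounding the prime factors of $\beta_\ell$ using the elliptic divisibility sequence structure developed in \S\ref{sec:EDS}.
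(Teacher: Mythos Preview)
Your opening moves—the union bound over $p$, the restriction to $p\ell_p\leq 2B$, and the appeal to Hasse's theorem for $p\gg\ell_p$—match the paper's proof. The paper is then far more direct than you are: having reached $\sum_p B/(p\ell_p)$, it simply uses $1/p\ll 1/\ell_p$ to bound this by $B\sum_p 1/\ell_p^2$ and then by $B\sum_{n\geq\log B}1/n^2\ll B/\log B$, with no Chebotarev and no grouping by $\ell$. (The paper's last step is admittedly informal, since several $p$ can share the same $\ell_p$; a clean variant is to use $\ell_p\geq\log B$ and $p\leq 2B/\log B$ to obtain $\sum_p 1/(p\ell_p)\leq(\log B)^{-1}\sum_{p\leq 2B/\log B}1/p\ll(\log\log B)/\log B$, which is slightly weaker but entirely sufficient for the application.)

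Your Chebotarev route, by contrast, has a real gap. The uniform upper bound $\#\{p\leq X:\ell\mid\#E(\F_p)\}\ll X/(\ell\log X)$ would require effective Chebotarev for $\Q(E[\ell])/\Q$, whose degree and log-discriminant are $\asymp\ell^4$. Unconditional versions are only useful when $\log X$ exceeds a power of $\ell$, and even under GRH the error term $X^{1/2}\ell^{O(1)}$ swamps the main term once $\ell$ is near $\sqrt{B}$ with $X=2B/\ell$; so the claimed uniformity fails in exactly the range you need. Your suggested fallback via prime divisors of $\beta_\ell$ gives only $\sum_{p:\ell_p=\ell}1/p=O_{E,P}(1)$ (since all such $p$ lie in $[c\ell,\,e^{O(\ell^2)}]$), and summing $O(1)/\ell$ over primes $\ell\in[\log B,\sqrt{B}]$ yields the useless $N_1(B)\ll B\log\log B$. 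There is also an arithmetic slip: if ``inner sum'' means $\sum_{p:\ell_p=\ell}1/(p\ell)$, then your Chebotarev input gives $\ll(\log\log B)/\ell^2$, and summing over primes $\ell\geq\log B$ yields $\ll(\log\log B)/\log B$, not $1/(\log B)^2$; the appearance of $\sum 1/\ell^3$ is unexplained.
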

	\begin{proof}
		We have
		\begin{align*}
			N_1(B) 
			\ll  \sum_{\substack{\log B \leq \ell_p \leq B}}
			\#\{ |n| \ll B : p \ell_p \mid n \} 
			\ll  \sum_{\substack{\log B \leq \ell_p \leq B}}
			\frac{B}{p \ell_p}. 
		\end{align*}
		However, as $\ell_p$ is the order of $P$ modulo $p$, by the Hasse bounds we have
		$$\ell_p \leq |E(\F_p)| \leq p + 1 + 2\sqrt{p},$$
		in particular $1/p \ll 1/\ell_p$. Extending the sum over all integers $n$ then gives
		\[N_1(B) \ll B \sum_{\substack{\log B \leq \ell \leq B \\ \ell \in \Lambda }}	\frac{1}{\ell^2}
		\ll B \sum_{n \geq \log B}	\frac{1}{n^2} \ll B/\log B.
		\qedhere \]
	\end{proof}
	
	We thus turn our attention to $N_0(B)$, which we  deal with using the Selberg sieve.

	\begin{lemma}\label{lem:Selberg-application}
		$$N_0(B) \leq\exp(\# S+\# T+	O_{E,\epsilon}(\pi^{\epsilon}\widehat{h}(P)^{1/4})
		)
		\frac{B\log\log B}{
			(\log B)^{1/2\varphi(\pi)}}.
		$$
	\end{lemma}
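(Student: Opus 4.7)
The plan is to apply Selberg's upper bound sieve to the set $\{n\in\Z : |n|\leq B\}$, sieving out integers divisible by some prime $\ell \in \Lambda$ in the range $[\log B, B]$. For any squarefree $d$ with all prime factors in this set we have $\#\{|n|\leq B : d\mid n\} = 2B/d + O(1)$, so the hypotheses of the sieve are satisfied with local density $1/p$ at each prime. The key point is that Lemma~\ref{lem:Lambda} tells us the primes in $\Lambda$ occur with density $1/(2\varphi(\pi))$ among all primes; this density will be precisely the exponent of saving in the final bound.

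With sifting level $D = B^{1/2-\eta}$ for small $\eta>0$, Selberg's sieve yields
\[
N_0(B) \;\leq\; \frac{2B}{G(D)} \;+\; o\!\left(\frac{B}{(\log B)^{1/(2\varphi(\pi))}}\right),
\qquad
G(D) \;=\; \sum_{\substack{d \leq D^{1/2} \\ \mu^2(d)=1 \\ p\mid d \,\Rightarrow\, p \in \Lambda \cap [\log B, D]}} \prod_{p\mid d} \frac{1}{p-1}.
\]
The remainder term is absorbed for small enough $\eta$, and the task is therefore reduced to a lower bound for $G(D)$. Using $1/(p-1) \geq 1/p$ and standard manipulations, one obtains $\log G(D) \geq \sum_{p \in \Lambda,\,\log B \leq p \leq D} 1/p - \log\log B$, where the $\log\log B$ loss is a byproduct of the truncation $d \leq D^{1/2}$. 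Partial summation applied to Lemma~\ref{lem:Lambda} then gives
\[
\sum_{\substack{p\in\Lambda \\ \log B \leq p \leq D}}\frac{1}{p}
\;\geq\;
\frac{\log\log D}{2\varphi(\pi)} - O_{E,\epsilon}\!\left(\pi^{\epsilon}\widehat{h}(P)^{1/4}\right) - (\#S + \#T),
\]
the final two terms absorbing the two error sources in Lemma~\ref{lem:Lambda}.

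Exponentiating this lower bound for $\log G(D)$ and substituting back into $N_0(B) \leq 2B/G(D)$ produces the claimed inequality: the $\log\log B$ factor on the right-hand side comes from the truncation prefactor of $G(D)$, the saving $(\log B)^{1/(2\varphi(\pi))}$ emerges from the main term of the prime sum (using $\log\log D \asymp \log\log B$), and the exponential inflation $\exp(\#S+\#T+O_{E,\epsilon}(\pi^{\epsilon}\widehat{h}(P)^{1/4}))$ arises precisely from the transfer of the additive defects above into multiplicative factors via exponentiation. The main obstacle will be the careful bookkeeping in the partial-summation step, ensuring that each error term in Lemma~\ref{lem:Lambda} lands in the exponential prefactor in the stated form rather than as a worse loss; one also needs to verify that in the degenerate regime where $\widehat{h}(P)^{1/4}$ is comparable to $\log\log D$ the stated inequality remains (trivially) valid.
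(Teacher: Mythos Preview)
Your approach is essentially the paper's: Selberg sieve, then partial summation against Lemma~\ref{lem:Lambda} to convert the prime density into the saving $(\log B)^{-1/2\varphi(\pi)}$, with the additive defects $\#S+\#T+O_{E,\epsilon}(\pi^\epsilon\widehat{h}(P)^{1/4})$ exponentiated into the prefactor. The one imprecision concerns the source of the $\log\log B$: it does not arise from the truncation $d\leq D^{1/2}$ (for a sieve of dimension $<1$ this costs only a bounded factor in $G$), but from the lower cutoff $\ell\geq\log B$ on the sieving primes --- the paper makes this explicit by citing the product form of Selberg's sieve directly, bounding $\prod_{\ell\leq\log B,\,\ell\in\Lambda}(1-1/\ell)^{-1}\leq\prod_{p\leq\log B}(1-1/p)^{-1}\ll\log\log B$ via Mertens, and then applying partial summation to the full product $\prod_{\ell\in\Lambda,\,\ell\leq\sqrt{B}}(1-1/\ell)$.
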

	\begin{proof}
		We  use the version of the Selberg sieve stated in \cite[Thm.~3.6]{MV07}.
		This gives
		$$N_0(B) \ll B \prod_{\substack{ \log B \leq \ell \leq \sqrt{B} \\ \ell \in \Lambda}}
		\left( 1 - \frac{1}{\ell}	\right).$$
		However, by Mertens' theorem we have
		$$\prod_{\substack{ \ell \leq \log B \\ \ell \in \Lambda}}
		\left( 1 - \frac{1}{\ell}	\right)^{-1} \ll 
		\prod_{\substack{ \ell \leq \log B }}
		\left( 1 - \frac{1}{\ell}	\right)^{-1} \ll \log \log B.$$
		Thus it suffices to show that
		$$\prod_{\substack{ \ell \leq \sqrt{B} \\ \ell \in \Lambda}}
		\left( 1 - \frac{1}{\ell} \right) \leq
		\frac{\exp(
			\#S+\#T+O_{E,\epsilon}(\pi^{\epsilon}\widehat{h}(P)^{1/4})
			)}{
			(\log B)^{1/2\varphi(\pi)}}.$$
		To do so, we note that
		\begin{align*}
			\log \prod_{\substack{ \ell \leq \sqrt{B} \\ \ell \in \Lambda}}
			\left( 1 - \frac{1}{\ell} \right)^{-1}	
			& = - \sum_{\substack{ \ell \leq \sqrt{B} \\ \ell \in \Lambda}} 
			\log 	\left( 1 - \frac{1}{\ell} \right)	
			\geq \sum_{\substack{ \ell \leq \sqrt{B} \\ \ell \in \Lambda}} \frac{1}{\ell}.
		\end{align*}
		By partial summation this is
		\[
		\sum_{\substack{ \ell \leq \sqrt{B} \\ \ell \in \Lambda}} \frac{1}{\sqrt{B}}
		+\int_1^{\sqrt{B}}
		\sum_{\substack{ \ell \leq u \\ \ell \in \Lambda}} \frac{1}{u^2}
		\,\mathrm{d}u,
		\]
		and Lemma~\ref{lem:Lambda} shows that this is
		\[
		\geq
		(1/2\varphi(\pi))\log \log B 
		-\# S-
		\#T
		+
		O_{E,\epsilon}(\pi^{\epsilon}\widehat{h}(P)^{1/4}).
		\]
		Exponentiating 
		and taking reciprocals
		gives the claim, and hence the result.
	\end{proof}
	
	Combining these lemmas, and the fact that \(\pi = O_{E,\chi}(1)\) as observed at the start of Section~\ref{sec:brauer_proof}, completes the proof of the bound in Theorem~\ref{thm:Brauer_intro} with
	\begin{equation}\label{eqn:constant}
		C_{E,P,b}
		=
		\exp(\# T+	O_{E,b}(\widehat{h}(P)^{1/4})),
	\end{equation}
	where \(T\) is as in \eqref{eqn:T}. At the start of the proof we assumed that \(m=0\), at the cost of an additive factor of size at most \(m = O_b(1)\). This can  be absorbed into the implicit constant in \eqref{eqn:constant}, and completes the proof.
	\qed
	
	\medskip
	In order to get the last bound in Theorem~\ref{thm:non-abelian}, which depends explicitly on \(P\), we require the following supplement to Theorem~\ref{thm:Brauer_intro}.
	\begin{lemma}\label{lem:explicit-constant}
		Under the assumptions of Theorem~\ref{thm:Brauer_intro}, the constant from \eqref{eqn:constant} satisfies
		\[
		C_{E,P,b}
		= \exp\left(O_{E,b}\left(\frac{\widehat{h}(P)}{\log \widehat{h}(P)}\right)\right).
		\]
	\end{lemma}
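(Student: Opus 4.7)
The plan is to examine the two contributions to the exponent in \eqref{eqn:constant} separately. The term $\widehat{h}(P)^{1/4}$ is trivially of the desired size: when $\widehat{h}(P)$ is bounded (say, by some constant depending on $E$) we may absorb the whole exponent into the implicit constant, and when $\widehat{h}(P)$ is large we have $\widehat{h}(P)^{1/4}\log\widehat{h}(P)/\widehat{h}(P)=\log\widehat{h}(P)/\widehat{h}(P)^{3/4}=o(1)$, so the bound $\widehat{h}(P)^{1/4}\ll \widehat{h}(P)/\log\widehat{h}(P)$ holds. The substance of the lemma is therefore the bound
\[
\#T \ll_{E}\frac{\widehat{h}(P)}{\log\widehat{h}(P)},
\]
where $T=\{p\text{ prime}:v_p(P)>0\}$.

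To prove this, I would translate the condition defining $T$ into a statement about the $x$-coordinate of $P$. Write $x(P)=a/e^2$ with $\gcd(a,e)=1$ and $e\geq 1$ as in Definition~\ref{def:EDSB}. By Lemma~\ref{lem:denominator_p_adic}, the condition $v_p(P)>0$ is equivalent to $v_p(x(P))<0$, i.e.\ $p\mid e$. Hence $\#T\leq \omega(e)$, the number of distinct prime divisors of $e$. The classical bound $\omega(n)\ll \log n/\log\log n$ (obtained by comparing $n$ to the product of its prime divisors and applying the Prime Number Theorem) then gives
\[
\#T \ll \frac{\log e}{\log\log e}.
\]

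The last step is to relate $\log e$ to the canonical height. The naive height of $x(P)$ is $h(x(P))=\log\max(|a|,e^2)\geq 2\log e$, and by the standard comparison between the naive and canonical heights on an elliptic curve in Weierstrass form (see \cite[Thm.~VIII.9.3]{Sil09}) we have $\frac{1}{2}h(x(P))=\widehat{h}(P)+O_E(1)$. Therefore $\log e\leq \widehat{h}(P)+O_E(1)$. Inserting this into the bound for $\#T$ and combining with the trivial estimate for the $\widehat{h}(P)^{1/4}$ contribution yields the lemma. The only mild subtlety is ensuring $\log\log e$ can be replaced by $\log\widehat{h}(P)$ in the denominator: for $\widehat{h}(P)$ bounded this is absorbed into the implicit constant, and for $\widehat{h}(P)$ large we have $\log\log e\geq \log\log\widehat{h}(P)-O_E(1)\gg \log\widehat{h}(P)\cdot(\log\widehat{h}(P))^{-1}\cdot\log\log\widehat{h}(P)$, which still gives the needed bound after adjusting constants (alternatively one replaces $\log\log e$ by the coarser $\log\widehat{h}(P)$, at the cost only of an absolute constant in the $\ll$). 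There is no real obstacle: the result is a direct consequence of the definition of $T$, Lemma~\ref{lem:denominator_p_adic}, the elementary bound on $\omega$, and the comparison of heights.
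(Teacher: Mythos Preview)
Your argument is essentially the same as the paper's: reduce to bounding $\#T$, identify $T$ with the set of prime divisors of the denominator $e$ (equivalently $e_1$) of $x(P)$, apply $\omega(n)\ll\log n/\log\log n$, and compare $\log e$ with $\widehat{h}(P)$ via the standard height comparison. That is exactly the route taken in the paper.

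One small correction to your final paragraph: the inequality $\log\log e\geq \log\log\widehat{h}(P)-O_E(1)$ is not valid, since we only have an \emph{upper} bound $\log e\leq \widehat{h}(P)+O_E(1)$ and $e$ may well be $1$ (an integral point) even when $\widehat{h}(P)$ is large. The clean way to finish is to use that $x\mapsto x/\log x$ is increasing for $x>e$: if $\log e>e$ then
\[
\frac{\log e}{\log\log e}\leq \frac{\widehat{h}(P)+O_E(1)}{\log(\widehat{h}(P)+O_E(1))}\ll_E \frac{\widehat{h}(P)}{\log\widehat{h}(P)},
\]
using $\widehat{h}(P)\gg_E 1$; and if $\log e\leq e$ then $\omega(e)$ is absolutely bounded, which is again $\ll_E \widehat{h}(P)/\log\widehat{h}(P)$ by the same lower bound on $\widehat{h}(P)$. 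The paper's proof is equally terse on this point, so this is a cosmetic fix rather than a gap.
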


\begin{proof}
		By \eqref{eqn:constant} it suffices to show that
		\[
		\#T
		\ll_E
		\frac{\widehat{h}(P)}{\log \widehat{h}(P)}.
		\]
		We have \(\#T=  \omega(\prod_{v_p(P)>0} p) = \omega(e_1)\) where  \(e_1\) is as in Definition~\ref{def:EDSB} and we write \(\omega(k)\) for the number of distinct prime factors of \(k\).
	
	From Definition~\ref{def:EDSB} we have
		\(e_1 \leq \sqrt{H(x(P))}\) where \(H\) is the naive height on \(\mathbb P^1\). Now \(\widehat{h}(P)= O_E(1)+\frac{1}{2}\log H(x(P))\) by~\cite[Thm.~VIII.9.3(e)]{Sil09}. So \(\log e_1\leq O_E(1)+\widehat{h}(P)\), and since  \(\#T=   \omega(e_1)\ll\frac{\log e_1}{\log\log e_1}\) and 
		\(\widehat{h}(P)\gg_E 1\), by say~\cite[Thm.~VIII.9.10(a)]{Sil09}, this completes the proof. 
\end{proof}

%
	\section{Examples and applications} \label{sec:examples}
	
	In this section we give various examples and applications of Theorem~\ref{thm:Brauer_intro}, including the proofs of the results from the introduction and a generalisation of Example~\ref{ex:counter-example}.

	\subsection{A worked example} \label{sec:nice}
	
	Let 
	\begin{equation} \label{eqn:37}
		E:	\quad y^2 +y = x^3 - x.
	\end{equation}
	This  curve has conductor $37$. Its Mordell-Weil group is $\Z$ with generator $P=(0,0)$, which has everywhere good reduction. In particular $M=1$, and Ward's definition (EDSA) agrees with Verzobio's definition (EDSB). The elliptic divisibility sequence associated to $P$ starting at $\beta_0$ reads
	$$0,1, 1, −1, 1, 2, −1, −3, −5, 7, −4, −23, 29, 59, 129, −314, − 65, 1529, −3689,  \dots.$$
	However $P \notin E(\R)^0$ so Theorem~\ref{thm:y} does not apply. Still, we are able to show using Theorem~\ref{thm:Brauer_intro} that the conclusion of Theorem~\ref{thm:y}  holds.

	We apply Theorem~\ref{thm:Brauer_intro} to the quaternion algebra $(-1,y)$. The associated Dirichlet character is then just the non-principal Dirichlet character $\chi$ modulo $4$. Modulo $4$, the EDS starting at $\beta_0$ becomes
	$$0,1, 1, 3, 1, 2, 3, 1, 3, 3, 0, 1, 1, 3, 1, 2, 3, 1, 3, 3, 0, \ldots$$
	which is periodic with period $\pi=10$. One now searches the sequence for terms $\beta_\alpha$ with $\gcd(\alpha,10) = 1$ and $\chi(|\beta_\alpha|) = -1$; one finds that $\beta_7 = -3$ suffices. Thus Theorem~\ref{thm:Brauer_intro} shows that 	$$\#\{n \in \Z  \colon |n| \leq B, y(nP) \text{ is a sum of two squares} \} \ll_\epsilon B/(\log B)^{1/10-\epsilon}$$
	using $1/2\varphi(10) = 1/10$. Alternatively, since $4 \nmid \pi$, we also obtain the result using simply $\beta_1 = 1$ and the fact that $\chi(-|\beta_1|) = 1$.
	
	\begin{remark}
		The keen reader may notice that we did not fully justify our calculation that the period 
		equals $10$. Thankfully, this is not necessary. Namely, Lemma~\ref{lem:period_minimal}
		shows that the period divides
		$$2\cdot (2-1) \cdot 2^2 \cdot  \ord(P \bmod 2) = 2^3 \cdot 5,$$
		whence $\gcd(\pi,7) = 1$. (Note that our bound is correct up to powers of $2$ here.)
	\end{remark}

	We take an example concerning higher order Dirichlet characters.
	Consider again \eqref{eqn:37} and let $\chi$ be a Dirichlet character modulo $7$ of order $3$. One finds that $\chi(|\beta_5|) = \chi(2)$ is non-trivial and that $|E(\F_7)| = 9$. But $\gcd(5, 2\cdot (7-1)|E(\F_7)|) = 1$. Thus by Lemma~\ref{lem:period_minimal} we may apply Theorem~\ref{thm:Brauer_intro}, without even having to calculate the period directly (in fact one finds that the period is $54 = (7-1)|E(\F_7)|$, so our criterion is again best possible up to powers of $2$). The corresponding cyclic extension $K/\Q$ has polynomial $x^3 - x^2 - 2x + 1$; note that this is totally real unlike the hypotheses in Theorem~\ref{thm:intro}. We now apply Theorem~\ref{thm:Brauer_intro} to the cyclic algebra $b=(\chi,x) \in \Br \Q(E)$, which is easily checked to ramify at $O$ with Dirichlet character $\chi^{-2} = \chi$ (since $\ord_O(x) = -2$). We deduce  that 
	$$\#\{n \in \Z  \colon |n| \leq B, x(nP) \text{ is a norm from } K \} \ll B/(\log B)^{\omega}$$
	for some $\omega > 0$.

	As we have seen, this $E$ does indeed satisfy the conclusion of Theorem~\ref{thm:y}. We have counter-examples to similar looking statements (Example~\ref{ex:counter-example}), but it does not seem to be possible to bootstrap these to get counter-examples involving the $y$-coordinate. In particular, we do not know the answer to the following question without imposing additional assumptions on $E$ or the associated EDSB.
	
	\begin{question}
		Let $E$ be an elliptic curve over $\Q$ given by an integral Weierstrass equation.
		Let $P \in E(\Q)$ have infinite order. Then is
		$$\#\{n \in \Z  \colon |n| \leq B, y(nP) \text{ is a sum of two squares} \} = o(B)?$$
	\end{question}

	\subsection{Proof of Theorem~\ref{thm:intro}}
	We translate a statement about conic bundles into a statement about quaternion algebras.
	We work over the local ring $R$ at $Q : = mP$. Restricting the conic bundle to $\Spec R$ we obtain a conic over $R$. It is a classical fact that any conic over $R$ can be diagonalised \cite[Cor.~I.3.4]{HM73}, so we may write the equation for our conic bundle near $Q$ as
	$$sx^2 + ty^2 = z^2, \quad \text{ where } s,t \in R.$$
	However, as we assumed that $X$ is non-singular it is checked that the valuation of $st$ is at most $1$. Thus we may assume without loss of generality that $t$ is a uniformiser in $R$ and that $s$ is a unit in $R$. Let $D$ be the image of $s$ in $R/(t) = \Q$. Since the fibre over $Q$ was assumed to be non-split with imaginary quadratic splitting field, it follows that $D$ is negative. Moreover, up to a suitable change of variables, we may assume that $D$ is a fundamental discriminant. 
	
	We now consider the quaternion algebra $b = (s,t)$ over $\Q(E)$, which gives rise to a $2$-torsion element of $\Br \Q(E)$. The residue of $b$ at $Q$ is $D \in \Q^\times/\Q^{\times 2}$, since $t$ is a uniformiser at $Q$ and $s$ is a unit. It follows that the associated Dirichlet character is the Kronecker symbol $\chi_D(\cdot) = \left(\frac{D}{\cdot}\right)$. As $D$ is negative we have $\chi_D(-1) = -1$, in particular $\chi_D(-|\beta_1|) = -1$. As $P \in E(\R)^0$, it now follows from Theorem~\ref{thm:Brauer_intro} that
	$$\#\{|n| \leq B : b(nP) = 0 \in \Br \Q\} \ll B/(\log B)^{\omega}$$
	for some $\omega > 0$. However, it is clear by construction that  for all but finitely many $R \in E(\Q)$, we have $b(R) = 0 \in \Br \Q$ if and only if $\pi^{-1}(R)$ has a rational point, and the result follows. \qed

	\subsection{Proof of Theorem~\ref{thm:non-abelian}}
	
	Let $L \subset K$ be a cyclic non-totally real subfield. If $y$ is a norm from $K$, then it is certainly a norm from $L$. Thus it suffices to prove the result when $K$ itself is a cyclic non-totally real extension of $\Q$. Let $\chi$ be a Dirichlet character corresponding to $K$ via the Kronecker--Weber Theorem, and consider the cyclic algebra $b = (y,\chi)$. For $Q \in E(\Q)$ with $y(Q) \neq 0$, we have $b(Q) = 0$ if and only if $y(Q)$ is a norm from $K$. The rational function $y$ has a pole of order $3$ at $O$, so it follows that the residue of $b$ at $O$ has Dirichlet character $\chi^{-3}$. However $\chi$ is odd as $K$ has a complex embedding, so $\chi^{-3}$ is also odd. As $\chi(-|\beta_1|) = -1$ and $P \in E(\R)^0$, the main bound in the result thus follows from Theorem~\ref{thm:Brauer_intro} with \(m=0\). The final estimate for the implicit constant follows from Lemma~\ref{lem:explicit-constant}.
	\qed
	
	\subsection{Proof of Theorem~\ref{thm:y}}
	Follows from Theorem~\ref{thm:non-abelian}  for $K = \Q(i)$. \qed
	
	\subsection{$x$--coordinate as sum of two squares}
	
	As the example \eqref{eqn:bad} shows, there are elliptic curves such that $x(Q)$ is a sum of two squares for every $Q \in E(\Q)$. We are able to obtain upper bounds for this counting problem providing one imposes additional assumptions on $E$ and $P$.
	
	\begin{theorem} \label{thm:x}
		Let $E$ be an elliptic curve over $\Q$ given by an integral Weierstrass equation. 
		Let $P \in E(\Q)$ have infinite order with $P \in E(\R)^0$. Assume that
		$x(mP) = 0$ for some non-zero $m \in \Z$.
		Then there exists $\omega=\omega(E,P) > 0$ such that 
		$$\#\{|n| \leq B  \colon x(nP) \text{ is a sum of two squares} \} \ll B/(\log B)^{\omega}.$$
	\end{theorem}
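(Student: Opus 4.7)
The plan is to apply Theorem~\ref{thm:Brauer_intro} to an appropriately chosen Brauer class, by direct analogy with the proof of Theorem~\ref{thm:y}. Consider the quaternion algebra $b = (x,-1) \in \Br \Q(E)$. For any $Q \in E(\Q) \setminus \{O\}$ with $x(Q) \neq 0$, the specialisation $b(Q) = (x(Q),-1) \in \Br \Q$ vanishes if and only if $x(Q)$ is a norm from $\Q(i)$, equivalently $x(Q)$ is a sum of two rational squares. Since $x$ has at most two zeros on $E$, the finitely many $n \in \Z$ for which $x(nP) = 0$ contribute $O(1)$ and can be absorbed into the error term.

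Next I would verify that $b$ is ramified at $Q_0 := mP$ and identify the resulting Dirichlet character. The point $Q_0$ is \emph{not} $2$-torsion: otherwise $2mP = O$, forcing $m = 0$ since $P$ has infinite order, contradicting the hypothesis. Hence $Q_0 = (0,y_0)$ is a smooth rational point at which the tangent line is non-vertical, so $x$ is a uniformiser at $Q_0$ and $v_{Q_0}(x) = 1$. A direct residue computation for the quaternion symbol then yields
\[
\res_{Q_0}(b) = -1 \in \Q^\times / \Q^{\times 2},
\]
corresponding to the quadratic extension $\Q(i)/\Q$ of conductor $4$. Following the recipe of \S\ref{sec:assoc-char}, the associated Dirichlet character $\chi$ is the non-principal character modulo $4$.

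Finally I would verify the hypotheses of Theorem~\ref{thm:Brauer_intro}. Take $\alpha = 1$, so that $\gcd(\alpha,\pi) = 1$ holds trivially. Since $\beta_1 = 1$ and $\chi(-1) = -1 \neq 0,1$, condition~\eqref{itm:nonresidue-II} is satisfied using the hypothesis $P \in E(\R)^0$. Theorem~\ref{thm:Brauer_intro} then gives
\[
\#\{|n| \leq B : b(nP) = 0 \in \Br \Q\} \ll_{E,P} B/(\log B)^{\omega}
\]
for any $\omega < 1/(2\varphi(4)) = 1/4$. Combined with the equivalence from the first paragraph and the negligible contribution of the zeros of $x$, this proves the theorem. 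I do not anticipate any serious obstacle: the argument reduces almost verbatim to that of Theorem~\ref{thm:y}, the only new ingredients being the selection of the Brauer class $(x,-1)$ and the elementary verification that $mP$ is not $2$-torsion, which ensures the class is genuinely ramified at $mP$ rather than at a point where $x$ vanishes to order $2$.
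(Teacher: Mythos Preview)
Your argument is correct and essentially the same as the paper's: the paper phrases it via the conic bundle $x_1^2 + x_2^2 = x\,x_0^2$ and invokes Theorem~\ref{thm:intro}, but the proof of Theorem~\ref{thm:intro} itself unwinds to exactly the quaternion-algebra-plus-Theorem~\ref{thm:Brauer_intro} argument you give, so you have simply bypassed that intermediate packaging. One small slip: in your final display the exponent should involve $\varphi(\pi)$ (the period of $\beta_n \bmod 4$), not $\varphi(4)$; since the theorem only claims existence of some $\omega>0$ this does not affect correctness.
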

	\begin{proof}
		The relevant conic bundle is given by
		$$x_1^2 + x_2^2 = x x_0^2 \quad \subset \P^2 \times E.$$
		The fibre over $mP$ is non-split since $x(mP) = 0$. Thus the result follows from Theorem~\ref{thm:intro}.
	\end{proof}
	
	This result applies for example to elliptic curves of the form 
	$$y^2 = x^3 + ax^2 + bx + c$$
	where $c$ is a square and one takes $P = (0, \sqrt{c})$, providing $P$ has infinite order and lies in $E(\R)^0$.

	\subsection{Proof of Theorem~\ref{thm:rank_1}}
	Let   \(G=E(\Q)^{\tors} \cap E(\R)^0\). 
	For the rest of this proof, let \(S\) be the set of primes dividing \(\#G\). Choose \(P_0\in E(\Q)\), depending only on \(E\), such that \(E(\Q)\cap E(\R)^0= \langle P_0\rangle \oplus G \)
	where $G$ is finite.
	Then
	\begin{equation}\label{eqn:torsion-dissection}
		\{Q\in E(\Q)\cap E(\R)^0: \widehat{h}(Q)\leq H\}
		=
		\bigcup_{\substack{
				k\in\N, \, k\ll H^{1/2} \\ p\mid k\implies p\in S
		}} \{Q\in \langle kP_0+ G\rangle: \widehat{h}(Q)\leq H\}.
	\end{equation}
	Note that we can discard the case \(\#G=1\), since there the result follows from Theorem~\ref{thm:y}. Then by Mazur's theorem \(\# S = 1\) or 2.
	
	We now count points \(Q\in  E(\Q)\cap E(\R)^0\) such that \(y(Q)\) is a sum of two squares. 
	To do this, we break the union in \eqref{eqn:torsion-dissection} up into two parts according to the size of \(k\). First, for small \(k\) we will apply Theorem~\ref{thm:non-abelian}, and then for large \(k\) we will use a trivial bound. 
	
	The case \(K=\Q(i)\) of Theorem~\ref{thm:non-abelian} implies that for each \(P\in E(\Q)\cap E(\R)^0\) of infinite order, we have
	\begin{equation*}\label{eqn:thm-y-for-application}
		\#\left\{ Q\in \langle P\rangle :
		\begin{array}{l}
				\widehat{h}(Q) \leq H, y(Q) \text{ is a }\\
				\text{sum of two squares} 
		\end{array} 
		\right\}
		\leq \exp\left(O_{E}\left( \frac{\widehat{h}(P)}{\log \widehat{h}(P)} \right)\right)
		\frac{H^{1/2}}{(\log H)^{\omega}},
	\end{equation*}
	for some $\omega > 0$ which depends only on \(E\).
	Let \(\kappa\) be a positive integer to be chosen later. By \cite[Prop.~VIII.9.6]{Sil09} we have \(\widehat{h}(kP_0+R)= C_{E,P_0} k^2\) for all torsion points $R$ and some constant \(C_{E,P_0}\) depending only on \(E\) and \(P_0\). Since \(P_0\) depends only on \(E\), it follows  from the last display that
	\begin{align}\label{eqn:applying-thm-y}
		\begin{split}
			\sum_{{
					k\in\N, \, k\leq \kappa
			}}
			\# \left\{Q\in \langle kP_0+G\rangle: 
			\begin{array}{l}
				\widehat{h}(Q) \leq H, y(Q) \text{ is a }\\
				\text{sum of two squares} 
			\end{array} \right\}
			\\
			\leq\exp\left(O_{E}\left( \frac{\kappa^2}{\log \kappa} \right)\right)
			\frac{H^{1/2}}{(\log H)^\omega}.
		\end{split}
	\end{align}
	To handle large \(k\) we claim that
	\begin{equation}
	\label{eqn:large-k}
	\sum_{\substack{
			k\in\N \\  \kappa<k\ll H^{1/2}\\p\mid k\implies p \in S
	}}
	\# \{Q\in \langle kP_0+G\rangle: \widehat{h}(Q)\leq H\}
	\ll_E \frac{H^{1/2}(\log \kappa)^{\# S-1}}{\kappa}.
	\end{equation}
	 We will prove this in the case \(\#S=2\), leaving the similar and slightly simpler case \(\# S=1\) to the reader.
	 Recalling that  \(\widehat{h}(kP_0+R)= C_{E,P_0} k^2\) for all \(R\in G\), we have 
	\[\# \{Q\in \langle kP_0+G\rangle: \widehat{h}(Q)\leq H\}\ll_E  H^{1/2}k^{-1}\]
	and so
	\begin{align*}
		\sum_{\substack{
				k\in\N \\  \kappa<k\ll H^{1/2}\\p\mid k\implies p \in S
		}}
		\# \{Q\in \langle kP_0+G\rangle: \widehat{h}(Q)\leq H\}
		 &\ll_{E} 
		\sum_{\substack{
				a,b\in\N\cup\{0\}, \\  
				 \kappa<p_1^a p_2^b\ll H^{1/2}
		}}
	H^{1/2}p_1^{-a}p_2^{-b}
	\\
	&\leq	
	H^{1/2}\sum_{\substack{j\in \N\\ \kappa \leq 2^j\ll H^{1/2}}}
		\sum_{\substack{
				a,b\in\N\cup\{0\}, \\  
				2^{j-1}<p_1^a p_2^b\leq 2^j
		}}\frac{1}{2^{j-1}},
%
%
%
	\end{align*}
	and since there are at most \(j\) pairs \((a,b)\) appearing in the final sum, this is at most \(
	H^{1/2}\sum_{{2^j\geq \kappa}}
	{j}/{2^{j-1}} \), which is \(O(	{H^{1/2}\log \kappa/\kappa})\) as required
	for \eqref{eqn:large-k}.
	
	Combining \eqref{eqn:large-k} with \eqref{eqn:torsion-dissection} and \eqref{eqn:applying-thm-y} gives
	\begin{multline*}
			\#\left\{Q \in E(\Q) \cap E(\R)^0 :
			\begin{array}{l}
				\widehat{h}(Q) \leq H, y(Q) \text{ is a }\\
				\text{sum of two squares} 
			\end{array}
			\right\}
		\\
		\ll_{E}
		\frac{H^{1/2}(\log \kappa)^{\# S-1}}{
			\kappa}
		+
		\exp\left(O_{E}\left( \frac{\kappa^2}{\log \kappa} \right)\right)
		\frac{H^{1/2}}{(\log H)^\omega}.
	\end{multline*}
	We choose \(\kappa= (\epsilon_{E,K}\log \log H \log \log \log H)^{1/2}\) for some small \(\epsilon_{E,K}>0\), so that \(\frac{\kappa^2}{\log \kappa} \ll \epsilon_{E,K} \log \log H \), and we then obtain the claimed result in the form
	\begin{multline*}
			\#\left\{Q \in E(\Q) \cap E(\R)^0 :
			\begin{array}{l}
				\widehat{h}(Q) \leq H, y(Q) \text{ is a }\\
				\text{sum of two squares} 
			\end{array}
			\right\}
		 	 \ll_{E} \frac{H^{1/2}(\log\log \log H)^{\# S-3/2}}{(\log\log H)^{1/2}}.
	\end{multline*}

	\subsection{Unramified elements}

	Our results contain various technical assumptions. In the next two sections we demonstrate that these are necessary in general. Firstly we show that for the conclusion of Theorem~\ref{thm:Brauer_intro}, we need to impose ramification on the Brauer group elements. (See \eqref{def:X(k)_b} for the notation $E(k)_b$.)
	
	\begin{lemma} \label{lem:unramified}
		Let $E$ be an elliptic curve over a number field $k$.
		Let $b \in \Br E$ with $E(k)_b \neq \emptyset$.
		Then $E(k)_b$ contains a translate of a subgroup of finite index.
		In particular $E(k)_b$ has positive density in $E(k)$.
	\end{lemma}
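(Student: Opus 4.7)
The plan is to combine class field theory with continuity of Brauer evaluation at each place. By the exact sequence \eqref{eqn:CFT}, an element of $\Br k$ vanishes if and only if its image in $\Br k_v$ vanishes for every place $v$. Writing $E(k)_{b,v}$ for the set of $P \in E(k)$ whose image $b(P) \in \Br k_v$ is zero, we have $E(k)_b = \bigcap_v E(k)_{b,v}$. Given the hypothesised $P_0 \in E(k)_b$, it suffices to produce a finite-index subgroup $H \subseteq E(k)$ such that $P_0 + H \subseteq E(k)_b$.

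First I would reduce to finitely many places. Because $b$ lies in $\Br E$ (is unramified), a spreading-out argument — realising $b$ by an Azumaya algebra on $E$ and extending it to an Azumaya algebra on a suitable integral model — shows that there exists a finite set of places $T$, containing the archimedean places and the places of bad reduction of $E$, such that $b$ extends to $\Br \mathcal{E}_T$ for some smooth proper model $\mathcal{E}_T \to \Spec \O_{k,T}$. For $v \notin T$, any $P \in E(k_v)$ extends to an $\O_v$-point of $\mathcal{E}_T$ by the valuative criterion of properness, so $b(P) \in \Br \O_v$; but $\Br \O_v = \Br \F_v = 0$ since $\O_v$ is Henselian with finite residue field. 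Hence $E(k)_{b,v} = E(k)$ for every $v \notin T$.

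Next, for each of the finitely many places $v \in T$, I would use continuity to produce the required subgroup. The evaluation $\ev_{b,v}: E(k_v) \to \Br k_v$ is continuous into the discrete group $\Br k_v$ (cf.~\cite[Ch.~13]{CTS20}); since $E$ is proper, $E(k_v)$ is compact, so $\ev_{b,v}$ is locally constant with finite image. For non-archimedean $v \in T$ the topological group $E(k_v)$ has a fundamental system of open neighbourhoods of the identity consisting of finite-index subgroups, and local constancy of $\ev_{b,v}$ produces such a subgroup $U_v$ on whose cosets $\ev_{b,v}$ is constant; for archimedean $v$ we take $U_v = E(k_v)^0$, whose cosets are the connected components and on each of which $\ev_{b,v}$ is constant. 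Let $H_v \subseteq E(k)$ be the preimage of $U_v$ under $E(k) \to E(k_v)$. Then $E(k)/H_v$ embeds in the finite group $E(k_v)/U_v$, so $H_v$ has finite index. Since $\ev_{b,v}(P_0) = 0$ and $\ev_{b,v}$ is constant on $P_0 + U_v$, we get $P_0 + H_v \subseteq E(k)_{b,v}$.

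Finally, setting $H = \bigcap_{v \in T} H_v$, a finite intersection of finite-index subgroups and hence itself of finite index, yields $P_0 + H \subseteq E(k)_b$, proving the first claim. Positive density then follows because any coset of a finite-index subgroup of $E(k)$ (finitely generated by Mordell--Weil) has positive density, for instance under height-based counting. The one mildly technical step is the spreading-out argument which extends the unramified class $b$ to an integral model; this is where the hypothesis $b \in \Br E$ (as opposed to $b \in \Br \Q(E)$) is essential and cannot be dropped.
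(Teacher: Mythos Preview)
Your proof is correct and follows essentially the same route as the paper: reduce to finitely many places using that $b \in \Br E$ spreads out, then at each remaining place use local constancy of evaluation to find an open finite-index subgroup on which $b$ vanishes, and intersect. The only cosmetic differences are that the paper first translates so that $O \in E(k)_b$ and then cites \cite[Prop.~13.3.1(iii)]{CTS20} for the reduction step, whereas you keep the basepoint $P_0$ and sketch the spreading-out yourself; your explicit treatment of the archimedean places via $E(k_v)^0$ is arguably a shade more careful than the paper's blanket ``$E(k_v)$ is profinite''.
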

	\begin{proof}
		By performing a translation, we may assume that $O \in E(k)_b$.
		As $b$ is unramified, there exists a finite set of places $S$ such that 
		$E(k_v)_b = E(k_v)$ for every $v\notin S$ \cite[Prop.~13.3.1(iii)]{CTS20}.
		Moreover, for any place $v$ by \emph{loc.~cit.} the evaluation map
		$$b: E(k_v) \to \Br k_v$$
		is locally constant.  
		In particular, there exists an open neighbourhood $O \in U_v$
		such that $U_v \subset E(k_v)_b$. But $E(k_v)$ is profinite, so such an open neighbourhood
		may be refined to an open subgroup $U_v$ of $E(k_v)$, which neccessarily has finite index
		as $E(k_v)$ is compact. So set $A = E(k) \cap_{v \in S} U_v$. By construction,
		this is a subgroup of $E(k)$ of finite index. Moreover, for all $P \in A$
		and all places $v$ we have $b(P) = 0 \in \Br k_v$. It now follows from the Hasse
		principle for $\Br k$ \eqref{eqn:CFT} that $A \subset E(k)_b$, as required.
	\end{proof}
	
	Let 
	$$b \in \Be(E) = \ker(\Br E \to \prod_v \Br E_{k_v})$$
	where the product is over all places $v$ of $k$. It follows easily from \eqref{eqn:CFT} that $b(P) = 0 \in \Br k$ for all $P \in E(k)$, so here $E(k)_b = E(k)$ even if $b \neq 0$. Such elements exactly correspond to the elements of $\Sha(E)$, providing it is finite (see \cite[Thm.~6.2.3]{Sko01}). 
	
	For completeness, we give such an explicit example in the form of a conic bundle. Our example is based on a variant of the well-known counter-example to the Hasse principle $2y^2 = x^4 -17$ due to Reichardt and Lind.
	
	\begin{proposition}
		Let $N$ be only divisible by primes which are $1 \bmod 8$ and consider the elliptic
		curve
		$$E: \quad y^2 = x(x^2 + N).$$
		Let $X$ be a smooth proper model for the the conic bundle over $E$ given by
		$$t_0^2 - 2t_1 = x t_2^2.$$
		\begin{enumerate}
			\item The map $X(\Q) \to E(\Q)$ is surjective.
			\item For $N = 17 \times 593$, $E$ has positive rank and the conic bundle
			morphism admits no section.
		\end{enumerate}
	\end{proposition}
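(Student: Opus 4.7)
The plan is to deduce (1) from Hasse--Minkowski and a case analysis of Hilbert symbols, and to treat (2) by exhibiting a non-torsion rational point and then showing the Brauer class \(b=(2,x) \in \Br \Q(E)\) is nonzero for the specific \(N\).

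For part (1), I fix a non-identity point \(Q = (x, y) \in E(\Q)\) and consider its fibre \(C_Q: t_0^2 - 2t_1^2 = x t_2^2\), a smooth plane conic over \(\Q\). By Hasse--Minkowski it suffices to verify \((2, x)_v = 1\) at every place \(v\) of \(\Q\). The archimedean contribution is trivial as \(2 > 0\), and odd primes \(p \nmid 2x\) are automatic. When \(p \mid N\) the hypothesis \(p \equiv 1 \pmod 8\) gives \(\bigl(\tfrac{2}{p}\bigr) = 1\), so \((2,x)_p = 1\) regardless of \(v_p(x)\). When \(p\) is odd, divides \(x\), and does not divide \(N\), the equation \(y^2 = x(x^2 + N)\) combined with \(p \nmid x^2 + N\) forces \(2 v_p(y) = v_p(x)\), so \(v_p(x)\) is even and again \((2,x)_p = 1\). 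Hilbert reciprocity then forces \((2,x)_2 = 1\). The remaining case \(Q = O\) reduces to checking rational points of the smooth model \(X\) above the point at infinity.

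For part (2a), I would produce an explicit infinite-order \(\Q\)-point on \(E: y^2 = x(x^2 + 17 \cdot 593)\) by direct search or a standard \(2\)-descent, so that the rank is at least one.

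For part (2b) the key reformulation is that a rational section of \(\pi\) exists if and only if the generic fibre has a \(\Q(E)\)-point, equivalently \(b = (2, x)\) vanishes in \(\Br \Q(E)\). Since \(x\) has divisor \(2T - 2O\) on \(E\), where \(T = (0,0)\), every tame residue of \(b\) along a codimension-one point of \(E\) is the square class \(2^{v_P(x)}\) which is trivial; hence \(b \in \Br E\), and by purity it is equivalent to show \(b \neq 0 \in \Br E\). Part (1) already tells us that \(b\) evaluates to \(0 \in \Br \Q\) at every rational point of \(E\), so \(b\) necessarily lies in the relative Brauer kernel \(\Be(E) \subset \Br E/\Br \Q\). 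The final and hardest step is to verify that \(b\) is nonetheless nonzero in \(\Br E/\Br \Q\) for the specific \(N = 17 \cdot 593\); I expect to do this by a 2-descent calculation modelled on the Reichardt--Lind obstruction, using the fact that both \(17\) and \(593\) are \(\equiv 1 \pmod 8\) to produce a nontrivial element of \(\Sha(E)[2]\) pairing nontrivially with \(b\) via Cassels--Tate, or equivalently by specialising \(b\) at a suitable non-rational point of \(E\) (e.g.\ over a quadratic field where \(x\) takes a value that is not a norm from \(\Q(\sqrt 2)\)) to detect nonvanishing directly.
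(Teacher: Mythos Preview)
Your outline for (1) is essentially the paper's approach, and your use of Hilbert reciprocity to dispose of $p=2$ is in fact slicker than the paper's direct $2$-adic case analysis. Two small gaps: you do not treat odd primes with $v_p(x)<0$ (easily fixed, since the denominator of the $x$-coordinate of a rational point is a perfect square, so $v_p(x)$ is even and $(2,x)_p=1$), and the fibre over $O$ is left as an assertion. The paper handles both by using the companion equation $t_0^2-2t_1^2=(x^2+Nz^2)t_2^2$ and a closure argument.

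For (2b) you have the right reformulation---show $b=(2,x)\neq 0$ in $\Br E$---but your proposed routes are either roundabout or not yet a proof. The paper's argument is more direct than a Cassels--Tate computation: under the identification $\Br E/\Br\Q \cong H^1(\Q,E)$, the class $b$ \emph{is itself} the class of a $2$-covering of $E$, with affine model $2w^2=z^4-N$ (this is the standard $2$-descent dictionary, cf.\ \cite[Ex.~X.4.8]{Sil09}). One then shows this genus-one curve is everywhere locally soluble but has no rational point, the arithmetic input being that $2$ is a quadratic residue but \emph{not} a quartic residue modulo $17$; hence it represents a nonzero element of $\Sha(E)[2]$, so $b\neq 0$. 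Your plan to produce a \emph{separate} element of $\Sha(E)[2]$ and pair it with $b$ is doing strictly more work; the alternative of specialising $b$ at a point over a quadratic field could in principle succeed but amounts to a different nontrivial computation you have not carried out.

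One further correction: evaluating to $0$ at every point of $E(\Q)$ does not by itself put $b$ in $\Be(E)$; that requires $b$ to be trivial in $\Br E_{\Q_v}$ for every $v$, which follows from the \emph{local} surjectivity $X(\Q_v)\to E(\Q_v)$ the paper establishes, not from the global statement you wrote. In any case this detour through $\Be(E)$ is not needed for (2b).
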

	\begin{proof}
		(1) We use the equation for the curve $y^2 = xz(x^2 + Nz^2)$ in weighted projective
		space. The conic bundle $X$ then has the equations 
		\begin{equation} \label{eqn:Sha_1}
			t_0^2 - 2 t_1^2 = xz t_2^2, \quad \text{if } xz \neq 0
		\end{equation}
		and
		\begin{equation} \label{eqn:Sha_2}
			t_0^2 - 2 t_1^2 = (x^2 + N z^2)t_2^2, \quad \text{if }  x^2 + Nz^2 \neq 0.
		\end{equation}
		We now verify that $X(\Q_v) \to E(\Q_v)$ is surjective for all places $v$ of $\Q$.
		First note that $X(\Q_v) \to E(\Q_v)$ is closed with respect to the $v$-adic topology.
		So it just suffices	to show that there is $\Q_v$-point over each fibre with 
		$xz(x^2 + Nz^2) \neq 0$.
		
		For $v = \infty$, there is clearly the real point $(\sqrt{2} : 1: 0)$ in
		\eqref{eqn:Sha_1}. This also
		gives a solution	for any $p \equiv 1 \bmod 8$, in particular for all $p \mid N$.
		So let $p \nmid 2N$. Here we may choose a representative so that $p$ doesn't simultaneously
		divide $x$ and $z$. If $v_p(x^2 + N z^2)$ is even, then from the equation of $E$ we find that
		$v_p(xz)$ is even, whence \eqref{eqn:Sha_1} has a solution by a Hilbert symbol calculation.
		If $v_p(x^2 + N z^2)$ is odd, our assumptions imply that $v_p(xz) = 0$ so again a Hilbert symbol
		calculation shows that \eqref{eqn:Sha_1} has a solution.

		Finally, for $p = 2$, here $N \equiv 1 \bmod 8$ is a square in $\Q_2^\times$,
		so making a change of variables we may assume that $N = 1$. 
		Again  from the equation of the curve, we may assume that
		$2$ doesn't simultaneously divide $x$ and $z$.
		First suppose that $2 \nmid xz$. Then from the equation $v_2(x^2 + z^2)$
		is even, which is a contradiction as $x^2 + z^2 \equiv 2 \bmod 8$.
		So assume without loss of generality that $2 \mid x$.
		Then $z,x^2 + z^2$ are both odd, hence from the equation $v_2(x)$ is even.
		Then $x^2 + z^2 \equiv 1 \bmod 8$, in which case \eqref{eqn:Sha_2}
		has a solution in $\Q_2$. 
		So every fibre is everywhere locally soluble,
		hence has a rational point. This proves (1).
		
		(2) Here \texttt{Sage} verifies that $E$ has torsion subgroup isomorphic to $\Z/2\Z$.
		But it has the point $(1088:36040:1)$, which must have infinite order (in fact
		\texttt{Sage} verifies by $2$-descent that $\rank E(\Q) = 2$). The conic bundle corresponds
		which by (1) and \eqref{eqn:CFT} can only happen if $b$ is trivial.
		But, by the general theory of $2$-descent for elliptic curves, the element $b$
		corresponds to a $2$-covering of $E$; this $2$-covering has the equation
		$$C_2: 2w^2 = 2^2 - 4Nz^4$$
		(cf.~\cite[Ex.~X.4.8]{Sil09}), which after a change of variables gives
		$C_2: 2w^2 = z^4 - N$.
		However this curve fails the Hasse principle; for $N=17$ this is the well-known example of 
		Reichardt and Lind, whereas in our case $N = 17 \times 593$ this follows from
		a similar argument to Case I of the proof of \cite[Prop.~X.6.5]{Sil09}. (The key point
		is that $2$ is not a quartic residue modulo $17$, see \cite[\S1]{Poo01}).
		It thus follows that this $2$-covering is non-trivial, so $b$ is non-trivial, as required.
	\end{proof}

	\subsection{Ramified examples}
	Lemma~\ref{lem:unramified} shows that to get any hope of obtaining a version of Theorem~\ref{thm:Brauer_intro} there needs to be ramification. Still this assumption is not sufficient in general, as was first shown for conic bundles in \cite[\S3]{bn19}. In this section, we build on this example and generalize the construction to higher order Brauer group elements. We work over a  number field $k$.

	\begin{definition}
		Let $P \in E(k)$ and $n \in \N$.
		We denote by $E_P[n]$ the scheme of points $Q \in E$ with $nQ = P$.
		This is a $E[n] = E_O[n]$ torsor. 
	\end{definition}

	\begin{proposition} \label{prop:ramified}
		Let $\ell$ be a prime, $E$ an elliptic curve over $k$, and $P \in E(k)$
		a primitive point of infinite order.
		Let $b \in \Br k(E)$ be ramified with ramification locus in $\Z P \setminus \ell \Z P$,
		whose residues have the same cyclic extension $K/k$.
		Assume that $K$ is a subfield of the field of fractions of every irreducible
		component of $E_P[\ell]$. 
		
		If $O \in E(k)_b$ then $E(k)_b$ contains a subgroup of finite index.
		In particular $E(k)_b$ has positive density in $E(k)$.
	\end{proposition}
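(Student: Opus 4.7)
The plan is to pull $b$ back by the multiplication-by-$\ell$ map $[\ell]:E\to E$, show that $[\ell]^*b\in \Br k(E)$ is actually unramified, and then apply Lemma~\ref{lem:unramified} to this element. Granting this, since $O\in E(k)_b$ implies $O\in E(k)_{[\ell]^*b}$ (as $[\ell]^*b(O)=b(\ell O)=b(O)$), Lemma~\ref{lem:unramified} furnishes a finite-index subgroup $A\subseteq E(k)_{[\ell]^*b}$, and then $\ell A\subseteq E(k)_b$ is a finite-index subgroup of $E(k)$ (noting $\ell E(k)$ is of finite index by Mordell--Weil, and $\ell A$ is the image of $A$ under $[\ell]$).

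To verify that $[\ell]^*b$ is unramified, I would exploit that $[\ell]$ is \'etale (being in characteristic zero), so residues of $[\ell]^*b$ are given by restricting residues of $b$: at any closed point $R\in E$ with $\ell R=Q$, the residue of $[\ell]^*b$ at $R$ is the image of $\res_Q b\in \rmH^1(k(Q),\Q/\Z)$ under restriction to $\rmH^1(k(R),\Q/\Z)$. Since the ramification locus of $b$ is contained in $\Z P\setminus \ell\Z P\subset E(k)$, the only relevant points are $Q=mP$ with $\ell\nmid m$, and $k(Q)=k$. Each such residue cuts out $K/k$, and vanishes after restriction to $k(R)$ precisely when $K\subseteq k(R)$; by Grothendieck purity this suffices to show $[\ell]^*b\in \Br E$.

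The main obstacle is therefore verifying $K\subseteq k(R)$ for every $R\in E_{mP}[\ell](\bar k)$, starting from the hypothesis only on $E_P[\ell]$. Fix $R_0\in E_P[\ell](\bar k)$ and write $R=mR_0+T$ with $T\in E[\ell](\bar k)$; choosing $m'\in \Z$ with $mm'\equiv 1\bmod \ell$, set $R_0':=R_0+m'T\in E_P[\ell](\bar k)$, so that $R=mR_0'$. For any $\sigma\in \Gal(\bar k/k)$, the equality $\sigma R=R$ is equivalent to $m(\sigma R_0'-R_0')=O$, and since $\sigma R_0'-R_0'\in E[\ell]$ already, this forces $\sigma R_0'-R_0'\in E[\gcd(m,\ell)]=\{O\}$, using that $\ell$ is prime and $\ell\nmid m$. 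Hence $k(R)=k(R_0')$, which coincides with the field of fractions of the irreducible component of $E_P[\ell]$ containing $R_0'$, and by hypothesis contains $K$. The essential point being exploited is that multiplication by $m$ with $\ell\nmid m$ is a Galois-equivariant bijection on $\ell$-division orbits, transferring the splitting hypothesis from $E_P[\ell]$ to $E_{mP}[\ell]$.
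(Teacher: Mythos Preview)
Your proof is correct and takes a genuinely different route from the paper's. The paper argues place-by-place: for $v\notin S$ not completely split in $K$, it shows $\ell E(k)\subset E(k_v)_b$ by contradiction, using that if $\ell R\equiv qP\bmod v$ with $\ell\nmid q$ then $P$ is $\ell$-divisible mod $v$, whence Hensel gives a $k_v$-point on $E_P[\ell]$, forcing $K$ to split at $v$. Places in $S$ are handled by the profinite argument from Lemma~\ref{lem:unramified}, and intersecting gives the finite-index subgroup. Your approach instead works globally and geometrically: by showing that $[\ell]^*b$ is everywhere unramified you reduce directly to Lemma~\ref{lem:unramified}, sidestepping the local analysis entirely. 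The price you pay is the extra (but clean) step of transferring the hypothesis from $E_P[\ell]$ to $E_{mP}[\ell]$ for all $m$ coprime to $\ell$, via the Galois-equivariant bijection $R_0'\mapsto mR_0'$; the paper avoids this by reducing to $P$ itself modulo $v$. Your argument is shorter and more conceptual, and it makes transparent why the conclusion is really about $\Br E$ rather than local invariants; the paper's argument, on the other hand, yields the slightly sharper local statement that $\ell E(k)\subset E(k_v)_b$ for almost all $v$.
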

	\begin{proof}
		Choose a finite set of places $S$ containing the archimedean places, the places ramified in $K$, and the places where $E_P[\ell]$ is not finite \'etale.
		
		If $v$ is completely split in $K$, then $b \otimes k_v$ is unramified, hence $E(k_v)_b = E(k_v)$ providing $S$ is sufficiently large \cite[Prop.~7.1]{LM19}. So assume that $v$ is not completely split in $K$. We claim that
		\begin{equation} \label{eqn:Zell + 1}
			\ell E(k) \subset E(k_v)_b.
		\end{equation}
		To see this, assume instead that $\ell R \notin E(k_v)_b$ for some $R \in E(k)$. Then providing $S$ is sufficiently large, by \cite[Prop.~7.1]{LM19} we must have $\ell R \equiv Q \bmod v$ where $Q \in E(k)$ is a ramification point of $b$. Since $Q \in \Z P \setminus \ell \Z P$ by assumption, we deduce that $\ell R \equiv qP \bmod v$ for some $q\in \Z$ with $\ell \nmid q$.  So $qP \bmod v$ is $\ell$-divisible. But $\ell \nmid q$, hence $P \bmod v$ is also $\ell$-divisible. But as $E_P[\ell]$ is finite \'etale over $v$, Hensel's Lemma shows that $E_P[\ell](k_v) \neq \emptyset$. However now $K$ is a subfield of the field of fractions of every irreducible component of $E_P[\ell]$, so we deduce that $K$ admits a place of degree $1$ over $v$. This  contradicts that $v$ is not completely split in $K$ and shows \eqref{eqn:Zell + 1}.
		
		We have shown that $\ell E(k) \subset E(k_v)_b$ for all $v \notin S$. For the places in $S$ we proceed as in the proof of Lemma~\ref{lem:unramified}, using the fact that $O \in E(k)_b$ to deduce that $E(k_v)_b$ contains a subgroup of finite index for all $v$. The result then follows from the fact that the intersection of finitely many finite index subgroups has finite index.
	\end{proof}
	
	We now explain how to construct such explicit examples. Let $\ell$ be a prime and $E/k$ an elliptic curve with primitive point $P$ of infinite order. Consider the natural map $f: E \to \P^1$ given by projecting to the $x$-coordinate. Let $K/k$ be a non-trivial cyclic extension. By the Faddeev exact sequence \cite[Thm.~1.5.2]{CTS20}, there exists $a \in \Br k(\P^1)$ whose ramification consists of any given collection of $2$ distinct rational points and whose residue at these points is $K/k$.
	
	We therefore choose distinct non-Weierstrass points $P_1,P_2 \in (\ell\Z + 1) P$ and choose $a$ ramified at $f(P_1),f(P_2)$. We then define $b' = f^*a$. This is ramified at the points $\pm P_i$ with residues in $K/k$. However we may have $E(k)_{b'} = \emptyset$. So we set $b = b' - b'(O)$, which now satisfies $O \in E(k)_{b}$.
	
	It remains to arrange that $K$  is a subfield of the field of fractions of every irreducible	component of $E_P[\ell]$, for which we need to impose further conditions on $E$ and $K$. We assume $E$  has full $\ell$-torsion, which  implies that $k$ contains all $\ell$th roots of unity. As $E_P[\ell]$ is an $E[\ell]$-torsor, we find that it corresponds to some element of 
	$$\HH^1(k,E[\ell]) = \HH^1(k, \mu_\ell^2) = k^\times/k^{\times \ell} \times k^\times/k^{\times \ell},$$
	where the last isomorphism is by Kummer theory. A pair $(\alpha,\beta) \in k^\times/k^{\times \ell} \times k^\times/k^{\times \ell}$ corresponds to the $\mu_\ell^2$-torsor given by
	\begin{equation} \label{eqn:torsor}
		x^\ell = \alpha, \quad y^\ell = \beta \qquad \subset \A^2_k.
	\end{equation}
	Now, the torsor $E_P[\ell]$ is non-trivial as $P$ is not $\ell$-divisible, so without loss of generality $\alpha$ is not an $\ell$th power in the notation of \eqref{eqn:torsor}. But then the function fields of the irreducible component of the scheme \eqref{eqn:torsor} are either $k(\sqrt[\ell]{\alpha})$ or $k(\sqrt[\ell]{\alpha}, \sqrt[\ell]{\beta})$. These  contain the field $K=k(\sqrt[\ell]{\alpha})$ which is non-trivial and cyclic of degree $\ell$.
	
	Writing down explicit curves and Brauer group elements which satisfy Proposition~\ref{prop:ramified} is now relatively easy using explicit ramified cyclic algebras on $\P^1$. This is how we found Example~\ref{ex:counter-example}.

	\section{Applicability} \label{sec:100}
	
	In this final section we verify that Condition \eqref{itm:nonresidue-I} in Theorem~\ref{thm:Brauer_intro} holds for almost all Dirichlet characters, under suitable assumptions. Before restating the condition, we recall our setup along with the notation: Fix an elliptic curve $E$ over $\Q$ with $P \in E(\Q)$ a point of infinite order with \(\beta_n\)  the associated EDSB. Let $\chi$ be a Dirichlet character with modulus $q(\chi)$, and $\pi(\chi)$ be the period of the sequence $\beta_n\bmod q(\chi)$. 
	
	Let us recall the technical condition. 
	\begin{equation}\label{eqn:*2}
		\textnormal{There exists }\alpha\in\Z\textnormal{ relatively prime to }\pi(\chi)\textnormal{ such that }\chi(|\beta_\alpha|)\notin \{0,1\}.
	\end{equation}
	We expect this to hold for all but finitely many Dirichlet characters, but it seems completely out of reach to prove this at present. Since this section is purely illustrative, we make numerous assumptions to simplify the statements and the proof.
	
	We assume $E$ does not have complex multiplication and that $P=(x,y)$ has integer coordinates with everywhere good reduction. (This is satisfied by the pair $(E,P)$ from \S\ref{sec:nice}, for example.) Under these assumptions, we have
	$$\beta_n=\psi_n(P),\ |\beta_n|=|e_n|.$$
	For odd Dirichlet characters some of our other conditions are more likely to apply, so for simplicity we consider only even Dirichlet characters, which we also assume to have prime modulus:
	\[ \Sigma(D) = \{\text{Dirichlet characters } \chi : \chi(-1)=1, q(\chi) \text{ is prime}, q(\chi) \leq D \}.\]
	
	\begin{theorem}\label{thm:100}
		$$\lim_{D \to \infty} \frac{\#\{ \chi \in \Sigma(D) : \chi \text{ satisfies } \eqref{eqn:*2}\}}{\#\Sigma(D)} = 1,$$
		i.e.~$100\%$ of Dirichlet characters in $\Sigma(D)$ satisfy \eqref{eqn:*2} as $D$ tends to infinity.
	\end{theorem}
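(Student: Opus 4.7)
The strategy is to use a single universal witness $\alpha = L = L(D)$, chosen to be a prime tending slowly to infinity with $D$---for concreteness, the least prime exceeding $\log\log D$ and avoiding the finite set of exceptional primes for which the mod-$L$ Galois representation of $E$ fails to be surjective. Any $\chi\in\Sigma(D)$ for which both $\gcd(L,\pi(\chi))=1$ and $\chi(|\beta_L|)\notin\{0,1\}$ manifestly satisfies \eqref{eqn:*2} with $\alpha=L$, so it suffices to bound the complement, which decomposes as $\mathcal{A}\cup\mathcal{B}\cup\mathcal{C}$ where
\[
\mathcal{A}=\{\chi:L\mid\pi(\chi)\},\quad \mathcal{B}=\{\chi:q(\chi)\mid|\beta_L|\},\quad \mathcal{C}=\{\chi:\gcd(L,\pi(\chi))=1,\, q(\chi)\nmid|\beta_L|,\, \chi(|\beta_L|)=1\}.
\]
The goal is to prove that $|\mathcal{A}|+|\mathcal{B}|+|\mathcal{C}|=o(\#\Sigma(D))$ as $D\to\infty$.

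For $\mathcal{A}$, by Lemma~\ref{lem:period_minimal} and the everywhere good reduction assumption, $\pi(p)$ divides $2(p-1)\ord(P\bmod p)$, so $L\mid\pi(p)$ forces $L\mid p-1$ or $L\mid\ord(P\bmod p)\leq\#E(\F_p)$. Dirichlet's theorem yields density $1/(L-1)$ for the first condition, and a standard Chebotarev calculation in $\GL_2(\F_L)$ (using that $E$ is non-CM and Serre's open-image theorem) gives density $O(1/L)$ for the second; summing $p/2$ over such $p\leq D$ yields $|\mathcal{A}|\ll D^2/(L\log D)$. For $\mathcal{B}$, since $\log|\beta_L|\ll L^2\widehat{h}(P)$, the bound $\sum_{p\mid n}p\leq n$ gives $|\mathcal{B}|\leq|\beta_L|=\exp(O(L^2))$.

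For $\mathcal{C}$, for each prime $p\nmid|\beta_L|$, the characters $\chi\bmod p$ with $\chi(|\beta_L|)=1$ form the kernel of the evaluation homomorphism at $|\beta_L|\in(\Z/p\Z)^\times$, a subgroup of index $\ord_p(|\beta_L|)$; at most $(p-1)/\ord_p(|\beta_L|)$ of these are even. Hence
\[
|\mathcal{C}|\leq\sum_{p\leq D}\frac{p-1}{\ord_p(|\beta_L|)}\leq D\sum_{p\leq D}\frac{1}{\ord_p(|\beta_L|)}.
\]
The inner sum obeys the elementary bound: since $\ord_p(m)=d$ implies $p\mid m^d-1$, we have $\#\{p:\ord_p(m)=d\}\leq d\log_2 m$, and splitting at $T=\sqrt{\pi(D)/\log m}$ yields $\sum_{p\leq D}1/\ord_p(m)\leq 2\sqrt{\pi(D)\log m}$. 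Substituting $\log|\beta_L|\ll L^2$ gives $|\mathcal{C}|\ll LD^{3/2}/\sqrt{\log D}$.

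Taking $L\asymp\log\log D$ gives $L\to\infty$ with $L^2=o(\log D)$, so all three contributions are $o(D^2/\log D)=o(\#\Sigma(D))$ by the Prime Number Theorem, proving the theorem. The main obstacle is the elementary estimate $\sum_p 1/\ord_p(m)\ll\sqrt{\pi(D)\log m}$: it is very far from the conjectural truth (Artin-type predictions give $\ord_p(m)\gg p$ for density one of primes), but it is adequate here precisely because $|\beta_L|$ can be kept as small as $\exp(O((\log\log D)^2))$ by growing $L$ extremely slowly with $D$.
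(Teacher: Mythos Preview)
Your argument is correct and takes a genuinely different route from the paper's proof. The paper does not fix a single witness: instead it considers \emph{all} prime witnesses $\ell\in\Omega$ simultaneously, and the heart of its proof is a structural lemma (Lemma~\ref{lem:limitsecond}) which uses Verzobio's recurrence~\eqref{eqn:verzobio} to show that if $\chi$ fails \eqref{eqn:*2} for every prime $\ell$, then in fact for every $\ell$ either $\chi(\beta_\ell)\in\{\pm1\}$ or $\ell\mid\ord(P\bmod q(\chi))$. It then bounds this latter set by taking a finite $R\subset\Omega$, splitting into ``some $\ell\in R$ has $\chi(\beta_\ell)\in\{\pm1\}$'' (density zero by the order argument) versus ``every $\ell\in R$ divides $\ord(P\bmod q)$'' (density $\leq\min_{\ell\in R}\ell/(\ell^2-1)$ by Chebotarev, the paper's Lemma~\ref{lem:ladicrep}), and lets $R\to\Omega$. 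Your approach bypasses the recurrence entirely: by letting a single prime witness $L$ drift slowly to infinity you keep $|\beta_L|$ small enough that the elementary Erd\H{o}s--type bound $\sum_{p\leq D}1/\ord_p(m)\ll\sqrt{\pi(D)\log m}$ handles $\mathcal{C}$ directly. This is more elementary and, in principle, yields an explicit rate of convergence, which the paper's double--limit argument does not.

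One point deserves care. Your bound $|\mathcal{A}|\ll D^2/(L\log D)$ with $L=L(D)$ varying requires Chebotarev in the field $\Q(E[L])$ of degree $\asymp L^4$ with error uniform in $L$; this is fine via effective Chebotarev since $L\asymp\log\log D$ makes the degree negligible compared to any power of $\log D$, but you should say so. Alternatively, and more simply, you can run the whole argument with $L$ \emph{fixed}: for fixed $L$ one has $|\mathcal{B}|+|\mathcal{C}|=o(\#\Sigma(D))$ as $D\to\infty$, while $\limsup_D |\mathcal{A}|/\#\Sigma(D)\leq C/L$ by ordinary (ineffective) Chebotarev and Dirichlet; letting $L\to\infty$ afterwards gives the result with no uniformity needed. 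You should also note that $L$ must be chosen large enough that $|\beta_L|>1$ (only finitely many primes fail this, by Siegel's theorem), so that $\ord_p(|\beta_L|)$ is meaningful.
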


	\begin{proof}
		As we only consider even characters we have $\chi(|\beta_\alpha|)=\chi(\beta_\alpha)$. If an $\alpha$ satisfying \eqref{eqn:*2} exists, then considering the arithmetic progression $\alpha \bmod \pi(\chi)$ shows that there exists such an $\alpha$ with $\alpha$ prime.  So let $\Omega$ be the set of all primes $\ell > 3$ with $|\beta_\ell| \neq 1$ (there are easily seen to be only finitely many such primes by Seigel's theorem on integral points on elliptic curves). For any subset $R\subseteq\Omega$, define  the sets
		\begin{align*}
			\Phi(D,R)&\coloneqq\left\{\chi \in \Sigma(D) :\forall\ell\in R \textnormal{ either }\chi(\beta_\ell)\in\{0,1\}\textnormal{ or }\ell\mid \pi(\chi)\right\}, \\
			\Phi'(D,R)&\coloneqq\left\{\chi \in \Sigma(D) : \forall\ell\in R \textnormal{ either }\chi(\beta_\ell)\in\{0,\pm1\}\textnormal{ or }\ell\mid \ord(P \bmod q(\chi))\right \}, \\
			\Phi''(D,R)&\coloneqq\left\{\chi \in \Sigma(D) : \forall\ell\in R \textnormal{ either }\chi(\beta_\ell)\in\{\pm1\}\textnormal{ or }\ell\mid \ord(P \bmod q(\chi))\right\}.
		\end{align*}
		Note that $\Phi(D,\Omega)$ contains all characters in $\Sigma(D)$ that fail \eqref{eqn:*2}. It suffices to show
		\begin{equation} \label{eqn:Phi}
			\lim_{D\to\infty}\frac{\#\Phi(D,\Omega)}{\#\Sigma(D)}=0.
		\end{equation}
		
		\begin{lemma}\label{lem:limitsecond}
			$\Phi(D,\Omega)\subseteq\Phi'(D,\Omega)=\Phi''(D,\Omega)$.
		\end{lemma}
		
		\begin{proof}

			We first prove $\Phi(D,\Omega)\subseteq \Phi'(D,\Omega)$. Suppose $\chi\in\Phi(D,\Omega)$ but $\chi\notin\Phi'(D,\Omega)$. Let $q=q(\chi)$ be the modulus of $\chi$. Then there exists a prime $\ell$ such that $\chi(\beta_\ell)\neq0,\pm1$ and $\ell\nmid \ord(P \bmod q)$, but $\ell\mid\pi(\chi)$. Let $\rho=\pi(\chi)/\ell^{v_\ell(\pi(\chi))}$. Note that $\ord(P \bmod q)\mid\rho$ and so $q\mid \beta_\rho$. For any integer $k$, we have the following implications,
			$$q\mid \beta_{\ell+k\rho}\implies \ord(P \bmod q)\mid \ell+k\rho\implies \ord(P \bmod q)\mid \ell\implies \ord(P \bmod q)=\ell.$$
			However, since $\ell \nmid \ord(P \bmod q)$ it follows $\chi(\beta_{\ell+k\rho})\neq0$ for any $k\in\Z$. Note that $\gcd(\pi(\chi),\ell+\rho)=\gcd(\pi(\chi),\ell-\rho)=1$ by construction of $\rho$. Hence, there exists $k_1,k_2\in\Z$ such that
			$$
			\ell_1=\ell+\rho+k_1\pi(\chi),\quad \ell_2=\ell-\rho+k_2\pi(\chi)
			$$
			are both primes in $\Omega$ and $\ell_1,\ell_2\nmid \pi(\chi)$. Then we must have
			$\chi(\beta_{\ell_1}),\chi(\beta_{\ell_2})=1$ since $\chi \in \Phi(D,\Omega)$. By periodicity, $\chi(\beta_{\ell\pm\rho})=1$ as well. Using Proposition~\ref{prop:Verzobio} (noting that $M = 1$ in our case) with $n=\ell, m = \rho$ and $r = 1$, we have
			\begin{equation}\label{eqn:d_ell}
				\beta_{\ell+\rho}\beta_{\ell-\rho}\equiv\beta_{\rho+1}\beta_{\rho-1}\beta_{\ell}^2\bmod q
			\end{equation}
			Since $\chi(\beta_{\ell+\rho}\beta_{\ell-\rho})=1$ and $\chi(\beta_\ell^2)\neq0,1$, it follows that $\chi(\beta_{\rho+1}\beta_{\rho-1})\neq0,1$. 
			Using Proposition~\ref{prop:Verzobio} again with $n = \ell + 2\rho, m = \rho, r= 1$, we have
			$$\beta_{\ell+3\rho}\beta_{\ell+\rho}\equiv\beta_{\rho+1}\beta_{\rho-1}\beta_{\ell+2\rho}^2\bmod q.$$
			Hence, either $\chi(\beta_{\ell+2\rho})\neq0,1$ or $\chi(\beta_{\ell+3\rho})\neq0,1$. In either case, after choosing $k_1,k_2\in\Z$ such that
			$$
			\ell+2\rho+k_1\pi(\chi),\quad \ell+3\rho+k_2\pi(\chi)
			$$
			are both primes in $\Omega$ (we use the assumption $\ell>3$ here), we obtain a prime $\ell_0\in\Omega$ such that $\chi(\beta_{\ell_0})\neq0,1$ and $\ell_0\nmid \pi(\chi)$, which contradicts $\chi \in \Phi(D,\Omega)$. This finishes the proof that $\Phi(D,\Omega)\subseteq \Phi'(D,\Omega)$.
			
			The containment $\Phi''(D,\Omega)\subseteq \Phi'(D,\Omega)$ is clear. To show $\Phi'(D,\Omega)\subseteq \Phi''(D,\Omega)$, it suffices to prove the following implication
			\begin{equation}\label{eqn:orderimplication}
				\chi(\beta_\ell)=0\implies \ell\mid\ord(P \bmod q)
			\end{equation}
			for all $\ell\in\Omega$. If $\chi(\beta_\ell)=0$, then $\ord(P \bmod q)\mid \ell$. However, since $\ord(P \bmod q)\neq1$, we must have $\ord(P \bmod q)=\ell$. This establishes \eqref{eqn:orderimplication}, and hence $\Phi'(D,\Omega)=\Phi''(D,\Omega)$.
		\end{proof}
		
		\begin{lemma}\label{lem:ladicrep}
			Let $\ell$ be a prime such that the $\ell$-adic Galois representation for $E$ $\rho_\ell\colon\Gal(\bar{\Q}/\Q)\to\GL_2(\Z/\ell\Z)$ is surjective. Then
			$$\limsup_{D\to\infty}\frac{\#\{\chi\in \Sigma(D) : \ell\mid \ord(P \bmod q(\chi))\}}{\#\{\chi\in \Sigma(D)\}}\leq\frac{\ell}{\ell^2-1}.$$
		\end{lemma}
		
		\begin{proof}
			To calculate the limsup, we can ignore finitely many characters. Hence, we will ignore characters whose modulus is ramified in the extension $\Q(E[\ell])$. Let $q$ be a prime unramified in $\Q(E[\ell])$ such that $\ell\mid \ord(P \bmod q)$. This implies that $E(\F_q)[\ell]\neq 0$,
			which is equivalent to
			\begin{equation}\label{eqn:det}
				\det(I_2-\rho_\ell(\Frob_q))=0.
			\end{equation}
			There are $\ell(\ell+1)(\ell-1)^2$ elements in $\GL_2(\Z/\ell\Z)$, and among them there are $\ell^3-\ell^2$ many elements $x\in\GL_2(\Z/\ell\Z)$ that satisfy the equation $\det(I_2-x)=0$.
			Thus, by the Chebotarev density theorem the proportion of primes $q$ that satisfy \eqref{eqn:det} is
			$$\frac{\ell^3-\ell^2}{\ell(\ell+1)(\ell-1)^2}=\frac{\ell}{\ell^2-1}.$$
			Let $\Omega_\ell$ denote the set of primes that satisfy \eqref{eqn:det}. Recall that if $\ell\mid \ord(P \bmod q)$, then $q\in\Omega_\ell$. As there are $q-1$ Dirichlet characters of modulus $q$, the limit in question is less than or equal to
			$$\lim_{D\to\infty}\frac{\#\{\chi\in \Sigma(D) : q(\chi) \in\Omega_\ell\}}{\#\{\chi\in \Sigma(D)\}}=\lim_{D\to\infty}\frac{\displaystyle\sum_{q\in \Omega_\ell,\; q\leq D} q-1}{\displaystyle\sum_{q\leq D} q-1}=\frac{\ell}{\ell^2-1}$$
			where the last equality follows from  simple application of the Chebotarev density theorem and partial summation.
		\end{proof}
		
		We now show \eqref{eqn:Phi}. To do so we consider a finite subset $R\subset \Omega$, then take the limit over all $R$. We divide the quantity of interest into two parts as
		\begin{align*}
			\frac{\#\Phi''(D,R)}{\#\Sigma(D)}\leq & \frac{\#\left\{\chi \in \Sigma(D) :\chi(\beta_\ell) \in \{\pm 1\}\textnormal{ for some }\ell\in R\right\}}{\#\Sigma(D)}\\
			&+\frac{\#\{\chi\in \Sigma(D) : \ell\mid \ord(P \bmod q(\chi))\; \forall\ell\in R\}}{\#\Sigma(D)}.
		\end{align*}
		We start with the first part. Fix some $\ell\in R$ and let $q \leq D$ be any prime not dividing $\beta_\ell$. Let $N$ be the order of $\beta_\ell$ in $(\Z/q\Z)^{\times}/\{\pm1\}$. Then the map
		$$\{\chi\textnormal{ modulus }q : \chi(-1)=1\} \to \mu_{N}, \quad \chi \mapsto \chi(\beta_\ell)$$
		is surjective. Hence 
		\begin{equation}\label{eqn:order}
			\frac{\#\{\chi\textnormal{ modulus }q: \chi(-1)=1,\; \chi(\beta_\ell) \in \{ \pm 1\}\}}{\#\{\chi\textnormal{ modulus }q : \chi(-1)=1\}}=\frac{2}{N}.
		\end{equation}
		Recall that by the definition of $\Omega$ we have $|\beta_\ell| \neq 1$. It follows that if $q > |\beta_\ell|$, then $N\geq \log ((q-1)/2)/\log |\beta_\ell|$. By taking $q$ very large compared to $\ell$ and sorting characters by their modulus, as well as safely ignoring characters of small modulus, one finds that
		$$
		\lim_{D\to\infty}\frac{\#\left\{\chi \in \Sigma(D) :\chi(\beta_\ell) \in \{\pm 1\}\textnormal{ for some }\ell\in R\right\}}{\#\Sigma(D)}=0.$$
		So, we are left with only the second part. Since $E$ does not have complex multiplication, the $\ell$-adic Galois representation $\rho_\ell\colon\Gal(\bar{\Q}/\Q)\to\GL_2(\Z/\ell\Z)$ is surjective outside a finite set of primes by Serre's open image theorem \cite[Thm.~III.7.9]{Sil09}. Hence, shrinking $\Omega$ by finitely many primes if necessary, we can assume that $\rho_\ell$ is surjective for all $\ell\in \Omega$.  
		Thus Lemma~\ref{lem:ladicrep} gives
		$$
		\lim_{D\to\infty}\frac{\#\Phi''(D,R)}{\#\Sigma(D)} \leq \min\left(\frac{\ell}{\ell^2-1}\right)_{\ell\in R}
		.
		$$
		Hence using Lemma~\ref{lem:limitsecond} and letting $R \to \Omega$ gives
		\[
		\lim_{D\to\infty}\frac{\#\Phi(D,\Omega)}{\#\Sigma(D)} \leq \lim_{D\to\infty}\frac{\#\Phi''(D,\Omega)}{\#\Sigma(D)}\leq \lim_{R\to\Omega}\lim_{D\to\infty}\frac{\#\Phi''(D,R)}{\#\Sigma(D)}=0. \qedhere
		\]
	\end{proof}

\end{document}